\pdfoutput=1

\documentclass[
	12pt%
	,pagesize%
	,headings=big%
	,paper=a4%
	,parskip=false%
	,headsepline=true%
	,twoside=false%
	,toc=bibliography
	]{scrartcl}

\addtokomafont{sectioning}{\normalfont\bfseries}
\setkomafont{title}{\normalfont\LARGE}
\setkomafont{subtitle}{\normalfont\Large}



%

\usepackage{authblk}

\usepackage{csquotes}
\usepackage[english]{babel}
\usepackage{graphicx}
\graphicspath{{Pictures/}{/}}
\usepackage[usenames,dvipsnames]{xcolor}
\usepackage{tikz-cd}
\usepackage[draft]{fixme}
\usepackage[nointlimits]{amsmath}
\usepackage{amssymb,mathrsfs,mathtools}
\usepackage{txfonts}
\usepackage{upgreek}
\usepackage{braket}
\usepackage{cancel}
\usepackage{siunitx}
\usepackage{aliascnt} 
\usepackage[amsmath,thmmarks,hyperref,thref]{ntheorem}
\usepackage[expansion=true,protrusion=true]{microtype}
\usepackage{xkeyval}



\newcommand{\Vol}{{\on{Vol}}}
\newcommand{\dVol}{{d_{\on{Vol}}}}
\newcommand{\vol}{{\on{vol}}}

\newcommand{\Gram}{{\mathbf{G}}}

\newcommand{\PosDef}{{P}}
\newcommand{\dPosDef}{{d_P}}
\newcommand{\SymBil}{\on{Sym}}

\newcommand{\Inj}{{\on{Mon}}}
\newcommand{\orGr}{\on{Gr}}
\newcommand{\Ray}[1][1]{{G_{#1}}}
\newcommand{\dRay}[1][1]{{d_{G_{#1}}}}
\newcommand{\dinfty}{{d_\infty}}

\newcommand{\PosDefSec}{{\cP}}
\newcommand{\dPosDefSec}{{d_\cP}}

\newcommand{\Imm}{{\on{Imm}}}

\newcommand{\dImm}{{d_{\on{Imm}}}}
\newcommand{\Immg}{{\on{Imm}_{\Curve}}}

\newcommand{\strongImm}{{\on{Imm^\times}}}
\newcommand{\dstrongImm}{{d_{\on{Imm}}^\times}}

\newcommand{\Diff}{{\on{Diff}}}

\newcommand{\Shape}[1][]{{\on{Shape}_{#1}}}
\newcommand{\dShape}[1][]{{d_{\on{Shape}_{#1}}}}
\newcommand{\ShapeQuot}[1][]{{\varPi}}

\newcommand{\strongShape}{{\on{Shape}}}
\newcommand{\dstrongShape}{{d_{\on{Shape}}}}
\newcommand{\strongShapeQuot}{{\varPi}}

\newcommand{\res}{\on{res}}


\newcommand{\invisible}[1]{}

\newcommand{\AmbSpace}{\R^m}

\let\originalleft\left
\let\originalright\right
\renewcommand{\left}{\mathopen{}\mathclose\bgroup\originalleft}
\renewcommand{\right}{\aftergroup\egroup\originalright}

\newcommand{\barB}{\bar{B}}
\newcommand{\MinF}[2]{\mathcal{M}_{#1}^{#2}}
\newcommand{\inter}[1][n]{{\mathcal{S}_{#1}}}
\newcommand{\rever}[1][n]{{\mathcal{R}_{#1}}}

\newcommand{\reverwo}{\mathcal{R}}
\newcommand{\treverwo}{\widetilde{\mathcal{R}}}
\newcommand{\interwo}{\mathcal{S}}

\newcommand{\qand}{\quad \text{and} \quad}

\newcommand{\Curve}{\gamma}

\newcommand{\pull}{\#}
\newcommand{\push}{\#}

\DeclareMathOperator{\conv}{conv}

\DeclareMathOperator{\Gr}{Gr}

\DeclareMathOperator*{\esssup}{ess\,sup}

\DeclareMathOperator{\argmin}{arg\,min}

\DeclareMathOperator*{\Ls}{Ls}
\DeclareMathOperator*{\Li}{Li}
\DeclareMathOperator*{\Lt}{Lt}

\newcommand{\converges}[1]{\stackrel{#1}{\longrightarrow}}

\newcommand{\transp}{{\mathsf{T}}}


\newcommand{\T}{{\on{T}}}

\newcommand{\cA}{{\mathcal{A}}}
\newcommand{\cB}{{\mathcal{B}}}
\newcommand{\cC}{{\mathcal{C}}}
\newcommand{\cD}{{\mathcal{D}}}

\newcommand{\cF}{{\mathcal{F}}}
\newcommand{\cG}{{\mathcal{G}}}
\newcommand{\cH}{{\mathcal{H}}}

\newcommand{\cJ}{{\mathcal{J}}}
\newcommand{\cK}{{\mathcal{K}}}

\newcommand{\cM}{{\mathcal{M}}}

\newcommand{\cP}{{\mathcal{P}}}

\newcommand{\cT}{{\mathcal{T}}}

\newcommand{\cX}{{\mathcal{X}}}


\DeclareMathOperator{\dom}{dom}

\DeclareMathOperator{\Hess}{Hess}
\newcommand{\dd}{{\operatorname{d}}}

\newcommand{\at}{|}





\newcommand{\Mat}{{\operatorname{Mat}}}

\newcommand{\Ortho}{{\operatorname{O}}}

\newcommand{\Gl}{{\operatorname{GL}}}

\DeclareMathOperator{\Sym}{Sym}





\newcommand{\ee}{{\operatorname{e}}}

\newcommand{\ceq}{\coloneqq}

\newcommand{\R}{{\mathbb{R}}}

\newcommand{\N}{\mathbb{N}}

\DeclareMathOperator{\id}{id}

\newcommand{\abs}[1]{\left\lvert#1\right\rvert} 
\newcommand{\nabs}[1]{\lvert{#1}\rvert} 
\newcommand{\bigabs}[1]{\big\lvert{#1}\big\rvert} 
\newcommand{\norm}[1]{\left\lVert#1\right\rVert}
\newcommand{\nnorm}[1]{\lVert{#1}\rVert}

\newcommand{\innerprod}[1]{\left\langle#1\right\rangle}
\newcommand{\ninnerprod}[1]{\langle{#1}\rangle}

\newcommand{\paren}[1]{{}\left(#1\right){}}
\newcommand{\nparen}[1]{{}(#1){}}
\newcommand{\bigparen}[1]{{}\big(#1\big){}}
\newcommand{\biggparen}[1]{{}\bigg(#1\bigg){}}
\newcommand{\Bigparen}[1]{{}\Big(#1\Big){}}

\newcommand{\intervaloo}[1]{\left]#1\right[}
\newcommand{\intervalco}[1]{\left[#1\right[}
\newcommand{\intervalcc}[1]{\left[#1\right]}
\newcommand{\intervaloc}[1]{\left]#1\right]}

\newcommand{\on}[1]{\operatorname{#1}}

\DeclareMathOperator{\dist}{dist}  
\DeclareMathOperator{\Hom}{Hom}    

\DeclareMathOperator{\tr}{tr}


\newcommand{\mynewtheorem}[4] 
{
\newaliascnt{#1}{#2}
\newtheorem{#1}[#1]{#3}
\aliascntresetthe{#1}
\expandafter\def\csname #1autorefname\endcsname{%
#4%
}%
}

\newtheorem{theorem}{Theorem}[section]
\mynewtheorem{theo}{theorem}{Theorem}{Theorem} 
\mynewtheorem{lem}{theorem}{Lemma}{Lemma} 
\mynewtheorem{lemma}{theorem}{Lemma}{Lemma} 
\mynewtheorem{prop}{theorem}{Proposition}{Proposition} 
\mynewtheorem{cor}{theorem}{Corollary}{Corollary}
\mynewtheorem{corollary}{theorem}{Corollary}{Corollary}
\mynewtheorem{quest}{theorem}{Question}{Question}

\mynewtheorem{problem}{theorem}{Problem}{Problem}

\theoremstyle{break}
\mynewtheorem{btheo}{theorem}{Theorem}{Theorem} 
\mynewtheorem{blem}{theorem}{Lemma}{Lemma}
\mynewtheorem{bcor}{theorem}{Corollary}{Corollary}
\mynewtheorem{bproblem}{theorem}{Problem}{Problem}

\theoremstyle{plain}
\theorembodyfont{\normalfont}
\mynewtheorem{dfn}{theorem}{Definition}{Definition}
\mynewtheorem{definition}{theorem}{Definition}{Definition}
\mynewtheorem{ex}{theorem}{Example}{Example}
\mynewtheorem{example}{theorem}{Example}{Example}
\mynewtheorem{rem}{theorem}{Remark}{Remark}
\mynewtheorem{remark}{theorem}{Remark}{Remark}

\theoremstyle{break}
\mynewtheorem{bdfn}{theorem}{Definition}{Definition}
\mynewtheorem{bex}{theorem}{Example}{Example}
\mynewtheorem{brem}{theorem}{Remark}{Remark}
\theoremheaderfont{\itshape}
\theorembodyfont{\upshape}
\theoremstyle{nonumberplain}
\theoremseparator{.}
\theoremsymbol{\ensuremath{\Box}}
\newtheorem{proof}{\textsc{Proof}}

\usepackage[
	backend=bibtex%
	,isbn=false%
	,doi=false%
	,url=false%
	]{biblatex}
\addbibresource{Literature.bib}

\usepackage[final,%
	bookmarks,%
	bookmarksdepth=3,%
	breaklinks=true,%
	colorlinks=true,%
	urlcolor=RoyalBlue,%
	linkcolor=RoyalBlue,%
	citecolor=ForestGreen,%
]{hyperref}%
\usepackage[all]{hypcap}

\addto\extrasenglish{%
}

\renewcommand{\qed}{}

\begin{document}

\title{Variational Convergence of Discrete Minimal Surfaces}

\author[1]{Henrik Schumacher\thanks{
\href{mailto:henrik.schumacher@uni-hamburg.de}{henrik.schumacher@uni-hamburg.de}
}}%
\author[2]{Max Wardetzky\thanks{
\href{mailto:wardetzky@math.uni-goettingen.de}{wardetzky@math.uni-goettingen.de}
}}
\affil[1]{Centre for Optimization and Approximation,
University of Hamburg}

\affil[2]{Institute for Numerical and Applied Mathematics,
Georg-August University G\"{o}ttingen}

\maketitle

\begin{abstract}
Building on and extending  tools  from variational analysis, we prove Kuratowski convergence of sets of simplicial area minimizers to minimizers of the smooth Douglas-Plateau problem under simplicial refinement.  This convergence is with respect to a topology that is stronger than uniform convergence of both positions and surface normals.
\end{abstract}


\section{Introduction}
\label{intro}
The question of finding surfaces of minimum area for a given boundary in $\R^m$ is an extensively studied problem, at least since the work of Lagrange: Let a finite set $\varGamma= \set{\varGamma_1, \varGamma_2, \varGamma_3,\dotsc}$ of closed embedded curves in $\R^m$ be given.
Among all surfaces of \emph{prescribed} topology spanning~$\varGamma$ find those with least (or more precisely critical) area. Solutions of this problem are termed \emph{minimal surfaces}.
In the 1930s, Rad\'{o}~\cite{Rado1933} and Douglas~\cite{MR1501590} independently solved the least area problem for disk-like, immersed surfaces by showing existence of minimizers:
Let $D$ be the unit disk and let $\varGamma \subset \R^m$ be a simple closed curve. Then there exists an area minimizer in
\begin{align*}
	\set{f \in C^0(\bar D;\R^m) \cap C^2(D;\R^m) | f(\partial D)=\varGamma}.
\end{align*}
Douglas' proof involves minimizing the Dirichlet energy $\cD(f) = \int_D \abs{\dd f}^2 \dd x$ and a considerable amount of conformal mapping theory. Less is known about the existence of minimizers of more general least area problems (or least volume problems for the case of higher dimensional manifolds immersed into $\R^m$, which we also treat here). Indeed, one of the difficulties consists in the fact that minimizers in a prescribed topological class might simply not exist.  

A natural question to ask is how to compute minimal surfaces using finite dimensional approximations. Indeed, already Douglas  \cite{MR1502829} followed this approach using finite differences. A more flexible option is to consider a given finite set $\varGamma$ of embedded {boundary} curves in $\R^3$, followed by spanning a triangle mesh into $\varGamma$ and moving the positions of interior vertices such that the overall area of the triangle mesh is minimized. Following this approach, Wagner \cite{MR0458923} applied Newton-like methods for finding critical points of the area functional, Dziuk \cite{MR1083523} and  Brakke \cite{MR1203871} applied $L^2$-gradient descent (the discrete mean curvature flow) in order to produce discrete minimizers, and Pinkall and Polthier  \cite{MR1246481} presented an iterative algorithm for the minimization of area that can be interpreted as $H^1$-gradient descent. With these tools at hand, the question remains whether the so obtained discrete minimizers (e.g., those depicted in \autoref{fig:Borro}) converge to smooth minimal surfaces and if so in which sense?

\begin{figure}[t]
\begin{center}
\begin{minipage}{0.32\textwidth}
\includegraphics[width=\textwidth]{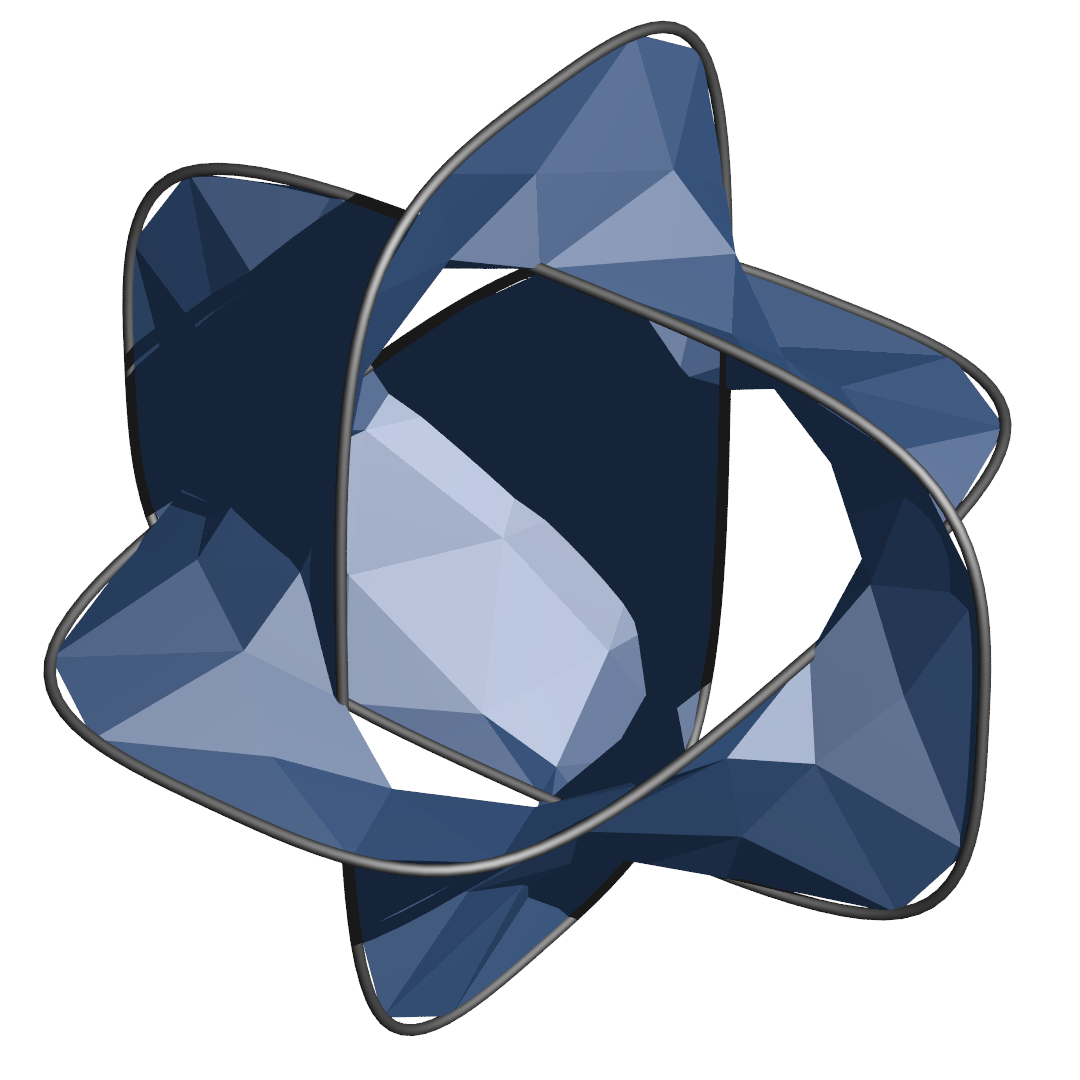}
\end{minipage}
\begin{minipage}{0.32\textwidth}
\includegraphics[width=\textwidth]{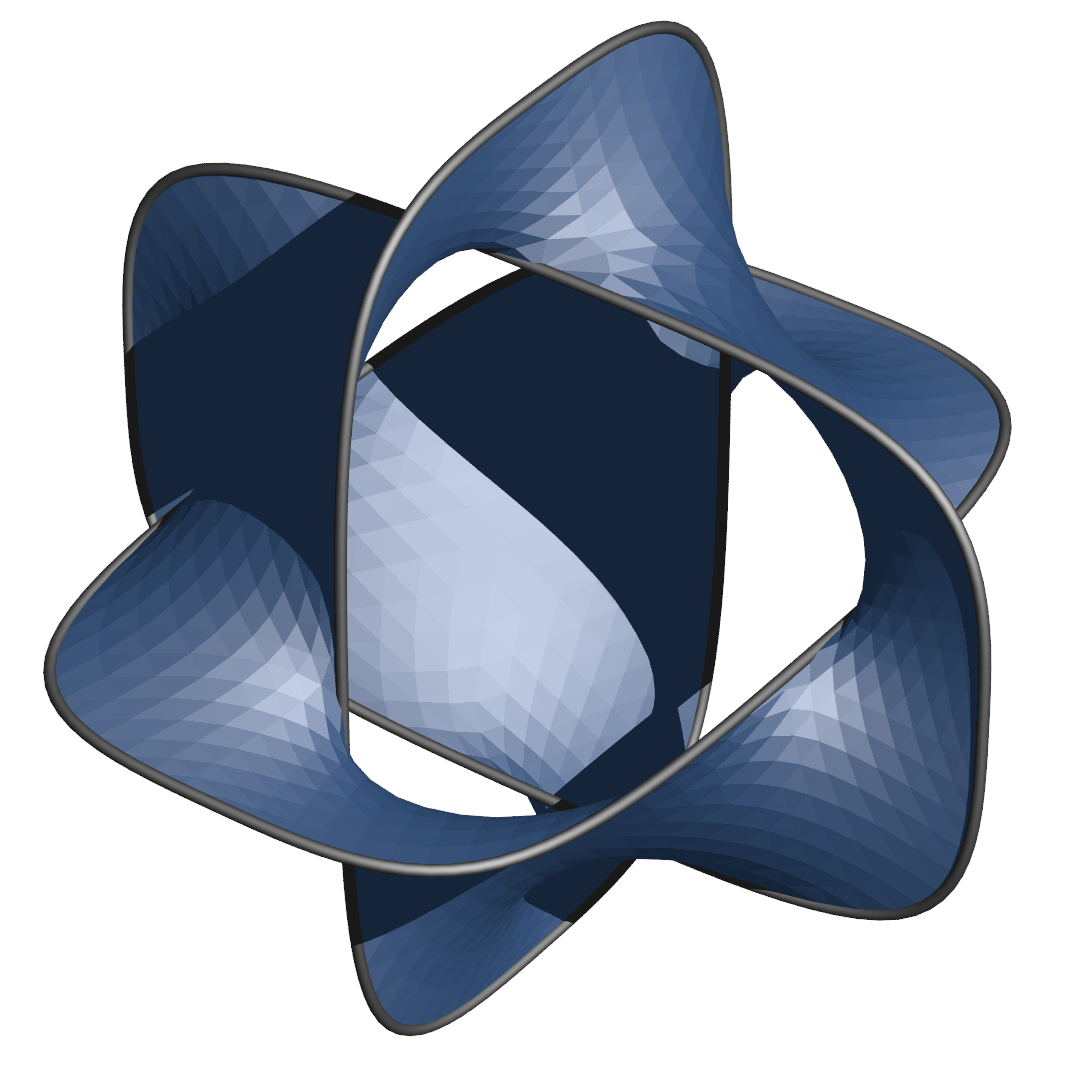}
\end{minipage}
\begin{minipage}{0.32\textwidth}
\includegraphics[width=\textwidth]{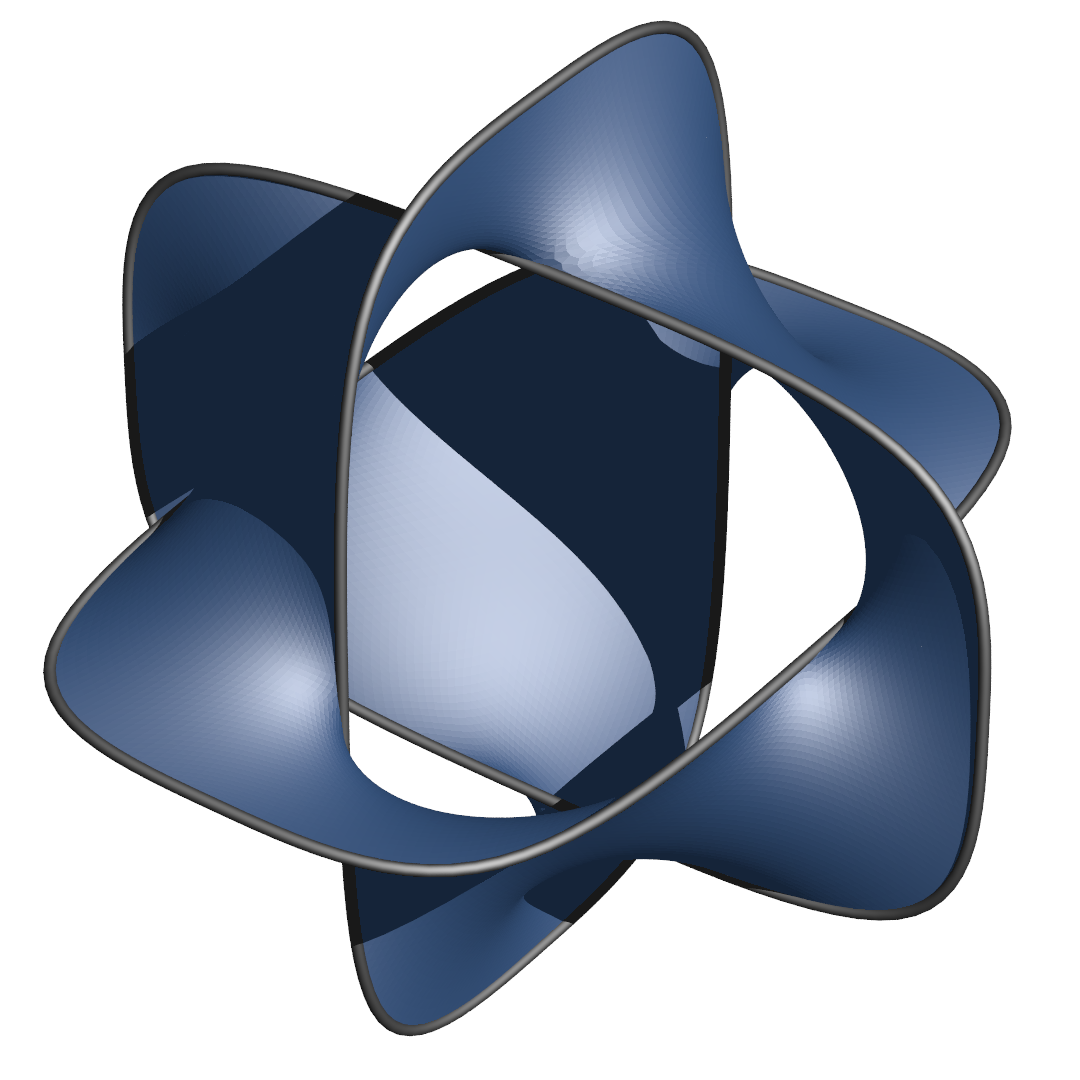}
\end{minipage}
\caption{Some minimizers of the discrete least area problem with Borromean rings as boundary at increasing mesh resolutions.
}
\label{fig:Borro}
\end{center}
\end{figure}

The only approach for which such convergence has been established is based on Douglas' existence proof for disk-like minimal surfaces: Instead of the area of (unparameterized) surfaces, the Dirichlet energy of surface parameterizations is minimized under the constraint of the so-called \emph{three point condition}.
Several authors utilize this idea in order to compute numerical approximations of minimal surfaces via finite element analysis, e.g., Wilson \cite{MR0137309}, Tsuchyia \cite{MR890259}, Hinze \cite{MR1379282}, Dziuk and Hutchinson \cite{MR1613695}, \cite{MR1613699}, and Pozzi \cite{MR2126207}. 

However, these energy methods (which are based on minimizing the Dirichlet energy instead of the area functional) face certain difficulties:
\begin{itemize}
	\item In dimension greater than two, Dirichlet energy is no longer conformally invariant and minimizers 
	need not minimize area.
	\item For a non-disk surface, one also needs to vary the surface's conformal structure. 
	E.g., in the case of cylindrical topology, the space of conformal structures can be parameterized by the aspect ratio of a reference cylinder. This makes the cylindrical case accessible to energy methods (see \cite{MR2079604} and \cite{MR2126207}). Alas, more general cases have not been treated so far.
	\item Energy methods do not apply when surface area is coupled to some other, conformally non-invariant functional.
\end{itemize}

\begin{figure}[tp]
\begin{center}
\begin{minipage}{0.40\textwidth}
\includegraphics[width=\textwidth]{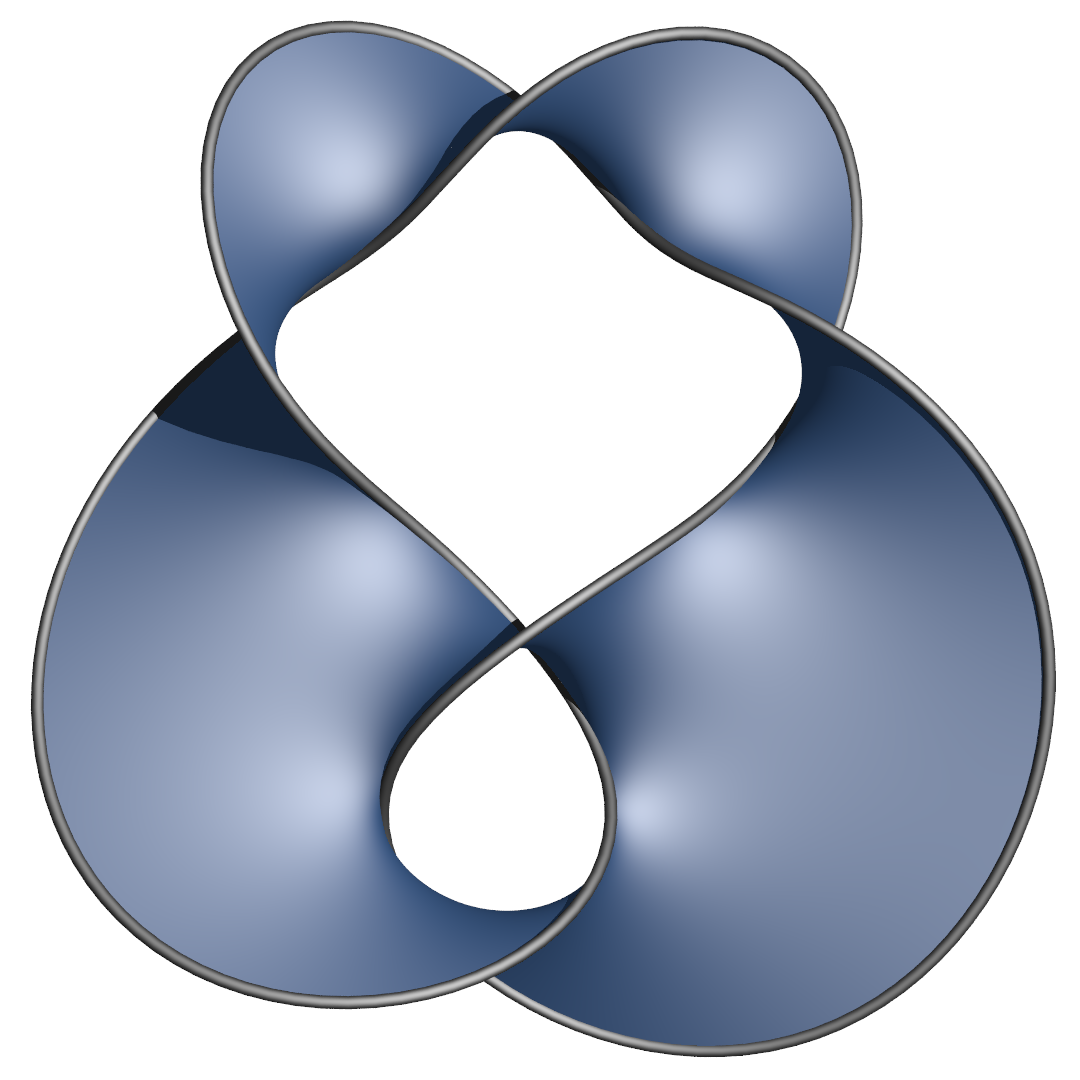}

\includegraphics[width=\textwidth]{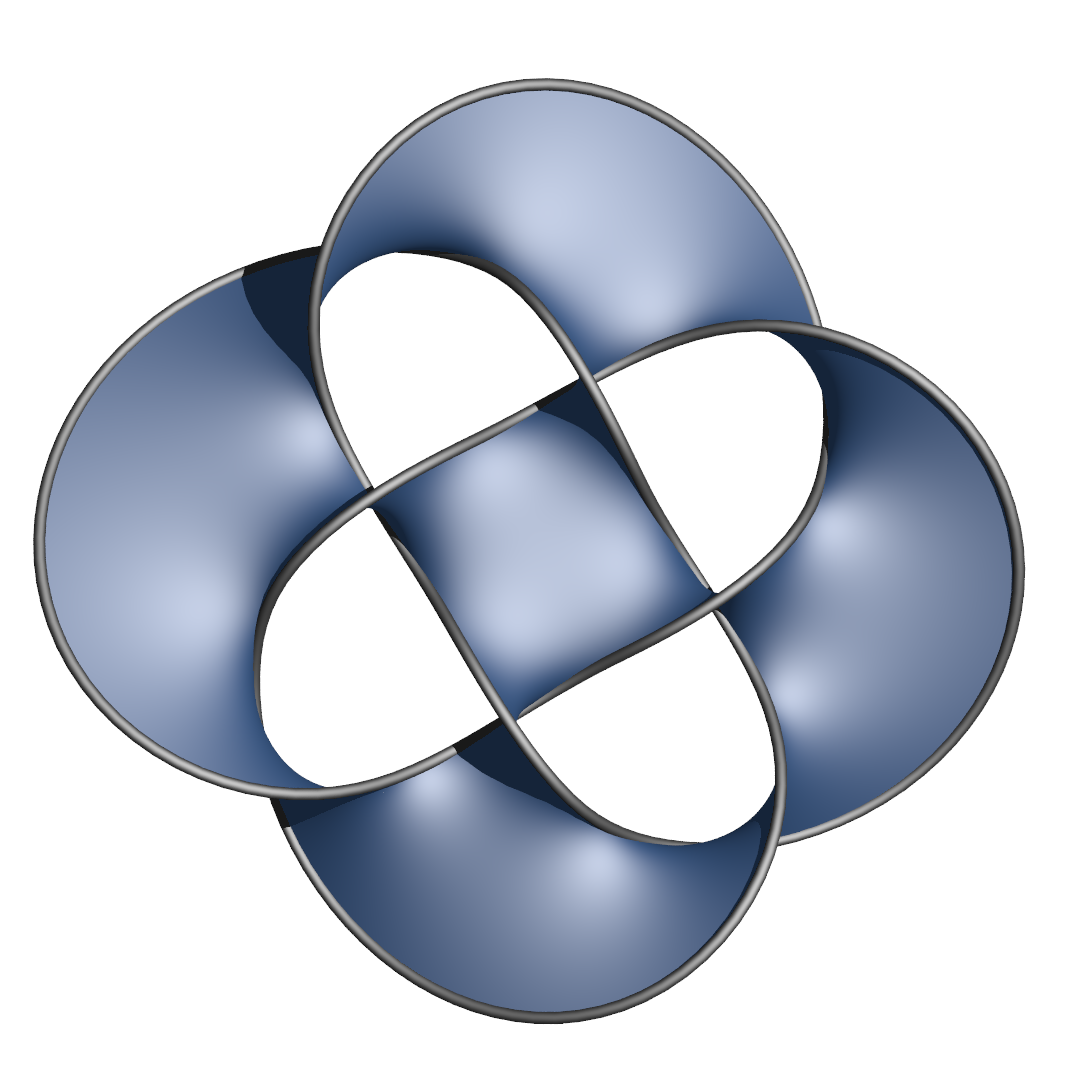}

\includegraphics[width=\textwidth]{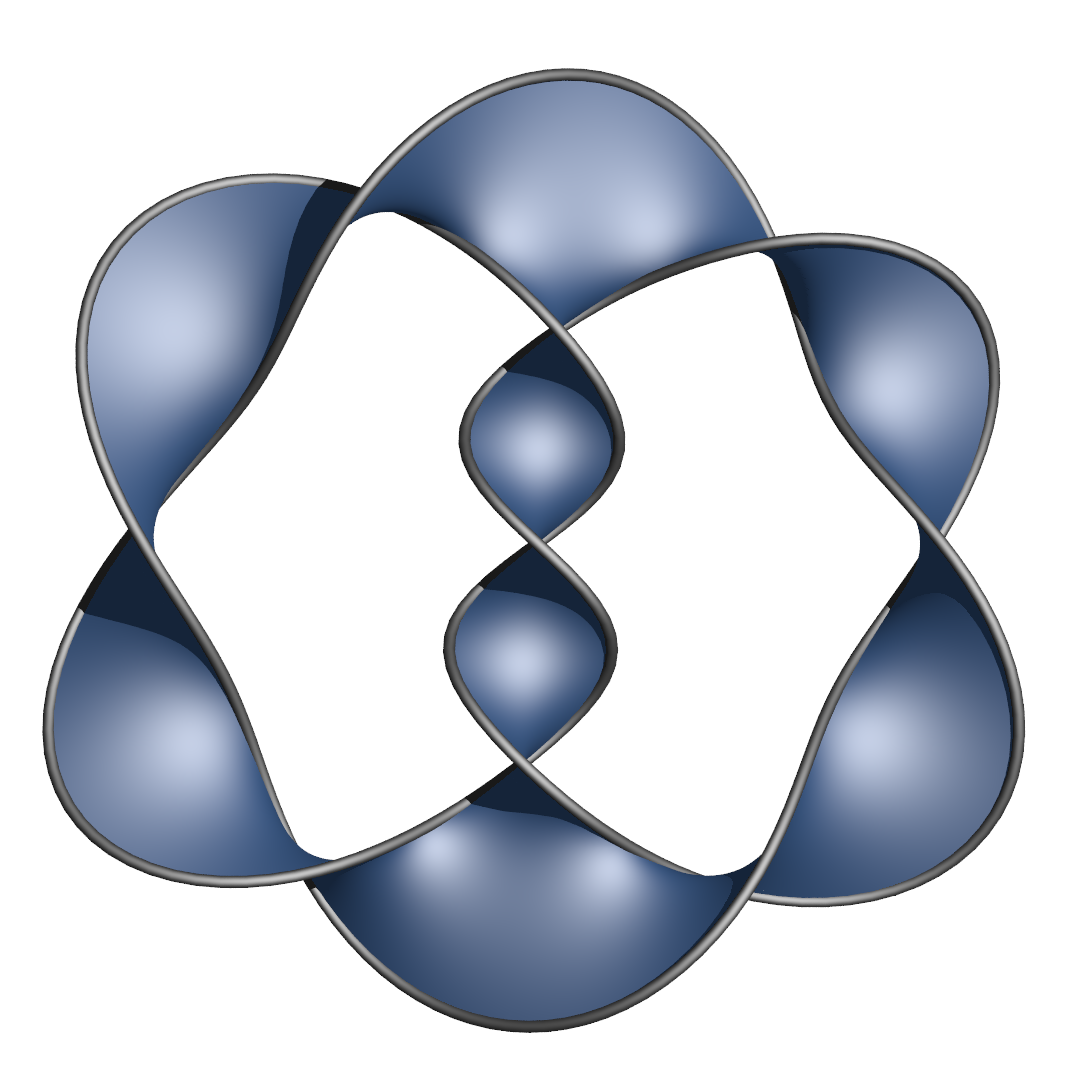}
\end{minipage}
\begin{minipage}{0.40\textwidth}
\includegraphics[width=\textwidth]{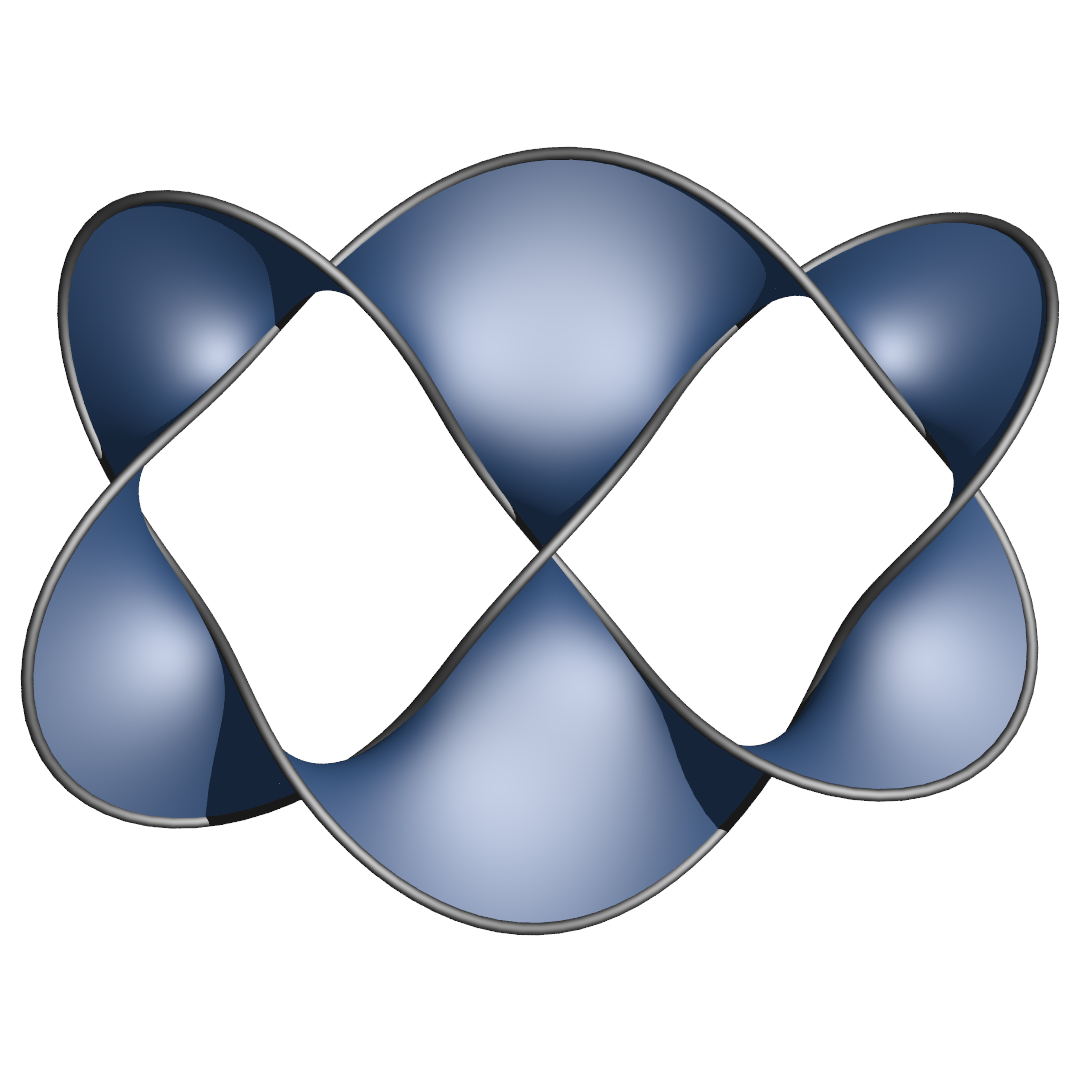}

\includegraphics[width=\textwidth]{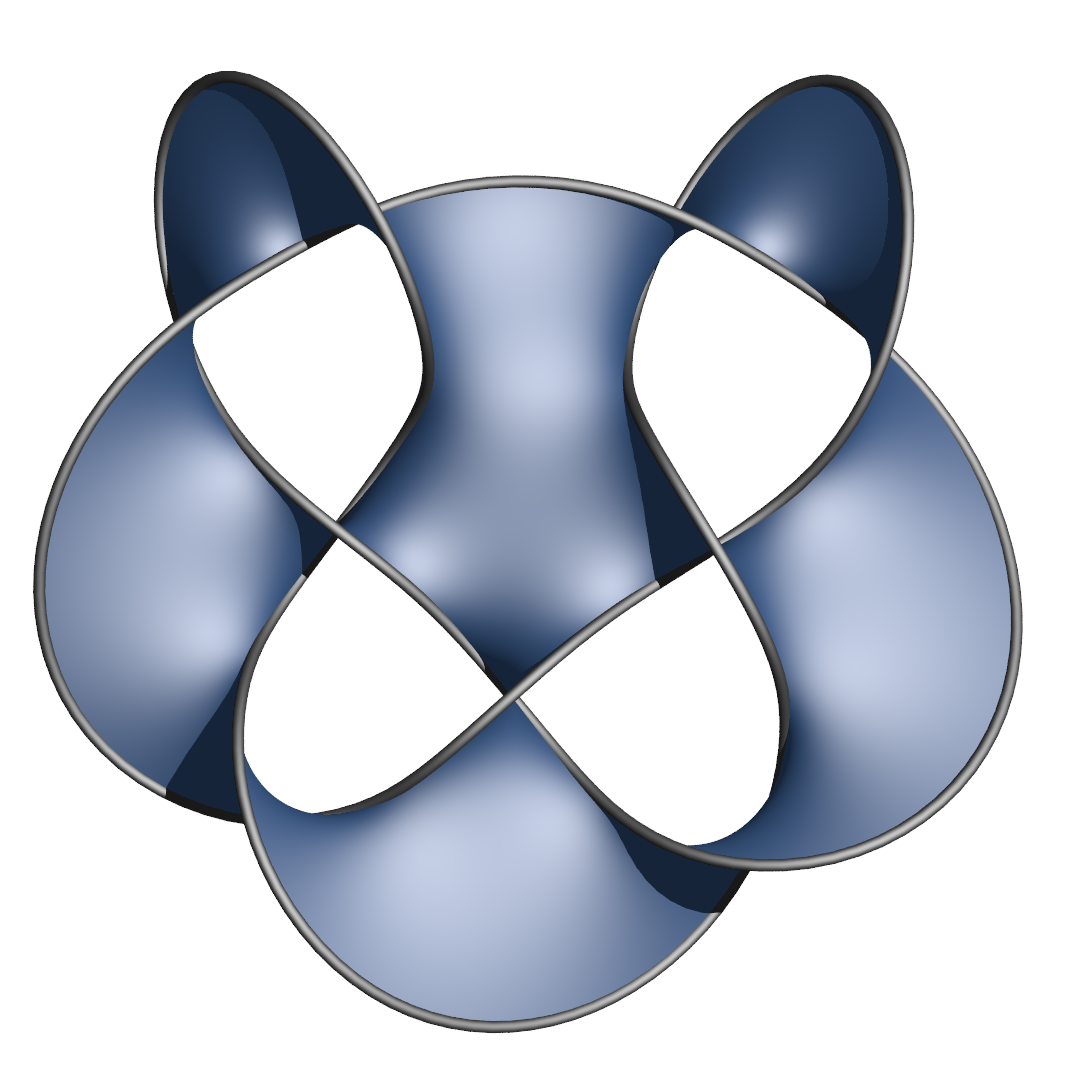}

\includegraphics[width=\textwidth]{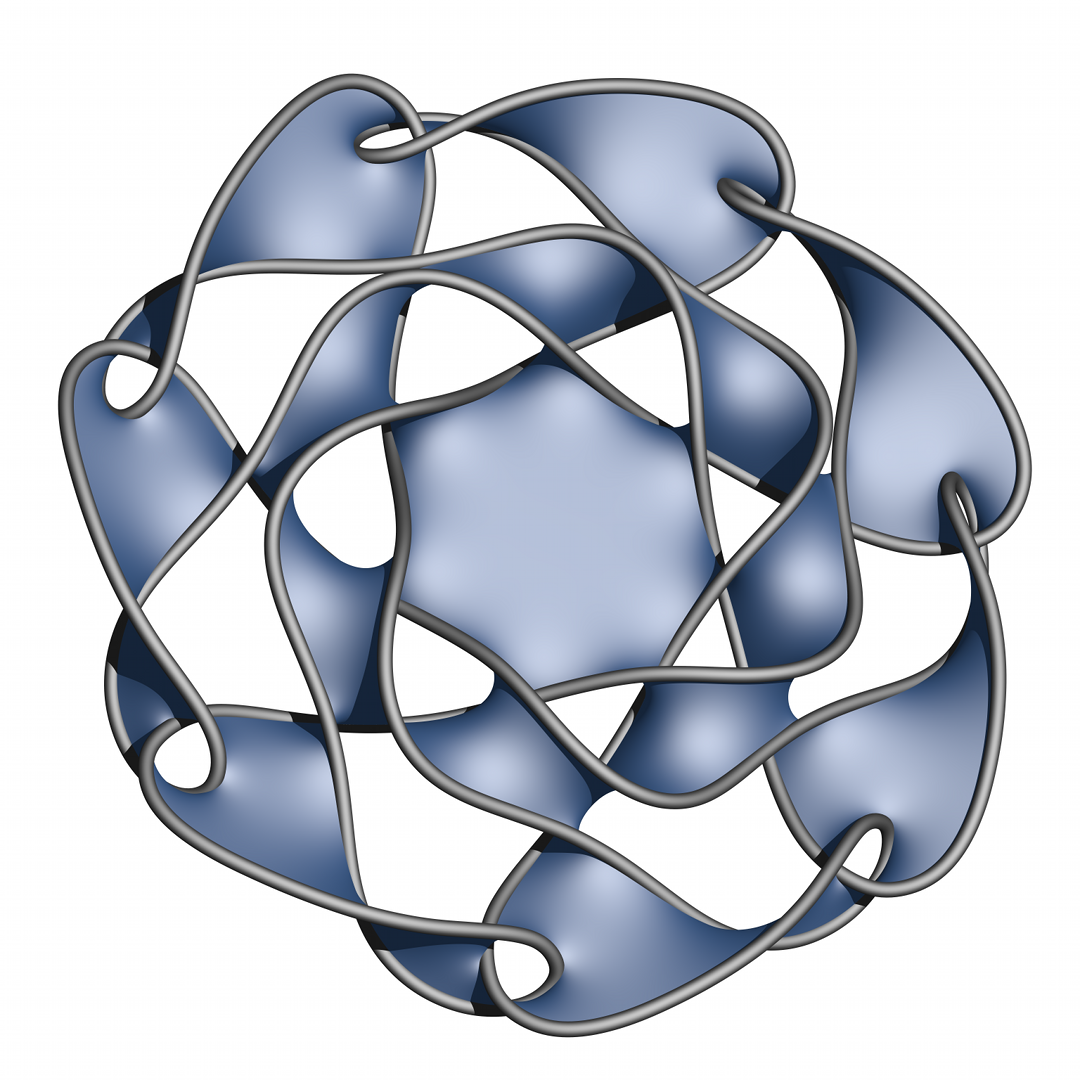}
\end{minipage}
\caption[Orientable and nonorientable discrete minimal surfaces of nontrivial genus.]{Orientable and nonorientable discrete minimal surfaces of nontrivial genus.
Boundary curves by Robert G. Scharein (see \url{http://knotplot.com}.)}
\label{fig:knots}
\end{center}
\end{figure}

In contrast, the Ritz method, i.e., the approach of minimizing area (or volume) among simplicial manifolds, is in principle capable of treating any dimension, codimension, and topological class with a single algorithm (for a variety of examples see \autoref{fig:knots}). This is the approach we follow here. Even non-manifold examples can be treated with this method (for examples see \autoref{fig:Cube} and \cite{MR1246481}). These advantages come at a cost, however: Showing convergence of the Ritz method is hampered by the following difficulties:
\begin{itemize}
	\item Simplicial manifolds capture smooth boundary conditions only in an approximate sense; hence, they cannot be utilized to minimize area (or volume) in the space of surfaces with smooth prescribed boundaries.
	 \item  Smooth minimal surfaces are known to satisfy strong regularity properties (e.g., they are analytic for sufficiently nice boundary data), leading to the question in which space and topology smooth and simplicial area minimizers ought to be compared.
	 \item In general, the least area problem is far from being convex.
	 \item Area minimizers need neither be unique nor isolated; rather, they are \emph{sets} in general.
\end{itemize}
These obstacles render the use of convex optimization approaches and monotone operators 
inappropriate (if not impossible) for showing convergence of discrete (i.e., simplicial) area minimizers. 

For these reasons, we suggest a different route for exploring convergence of discrete minimizers which in particular is capable of dealing with convergence of sets. Building on 
variational analysis, we prove \emph{Kuratowski convergence} of discrete area (and volume) minimizers  to their smooth counterparts. 
A sequence of sets $A_n$ in a topological space Kuratowski converges to a set $A$ if and only if all cluster points of $(A_n)_{n \in \N}$ belong to $A$ and if each $a \in A$ is 
additionally a \emph{limit} point of a sequence $(a_n)_{n \in \N}$ with $a_n \in A_n$. While Kuratowski convergence is weaker than Hausdorff convergence in general, both notions coincide in compact metric spaces. Kuratowski convergence is related to the perhaps more familiar notion of $\Gamma$-convergence: A sequence of functionals $\Gamma$-converges if and only if their epigraphs converge in the sense of Kuratowski. 

We establish the requisite concepts from variational analysis in Section~\ref{sec:paramopt}, where we introduce the notions of \emph{consistency} and  \emph{stability}, following the often repeated mantra from numerical analysis that consistency and stability imply convergence. In our setting, consistency refers to the existence of sampling and reconstruction operators $\inter$ and $\rever$ that take smooth manifolds to simplicial ones and vice-versa, respectively, such that the discrete and smooth area functionals $\cF_n$ and $\cF$ satisfy $\cF \approx \cF_n\circ \inter$ and $\cF_n \approx \cF \circ \rever$ in a sense made precise below. Stability refers to a notion of growth of sublevel sets of the smooth area functional $\cF$  near its (set of) minimizers. Additionally, we require the notion of \emph{proximity}, which is motivated by finding a space in which discrete and smooth minimizers can be compared. We suggest this comparison to be made in some metric space $(\cX, d_\cX)$, together with certain mappings $\varPsi_n$ and $\varPsi$ that take discrete and smooth surfaces to $(\cX, d_\cX)$, respectively. 
In particular, we are faced with the problem of choosing $d_\cX$: If the induced topology is too coarse, then convergence might be readily established but geometrically insignificant; vice-versa, a too fine topology may prohibit convergence. 

A balanced choice of $(\cX, d_\cX)$ is the main subject of Section~\ref{sec:ShapeSpace}, guided by the observation that the volume functional of an immersed surface is \emph{independent} of any parameterization. We choose $\cX = \strongShape(\varSigma;\R^m)=\strongImm(\varSigma; \R^m)/\Diff(\varSigma)$ as the space of \emph{(strong) Lipschitz immersions} $\strongImm(\varSigma)$ of a given manifold $\varSigma$ modulo bi-Lipschitz homeomorphisms $\Diff(\varSigma)$. This choice has the additional advantage that both simplicial and smooth immersions fall into the Lipschitz category. 
We construct $d_\cX$ from a parameterization invariant distance on $\strongImm(\varSigma)$ and we show that the resulting quotient semi-metric $\dstrongShape$ on  $\strongShape(\varSigma; \R^m)$ is indeed a proper and complete metric (see \autoref{quotisHsd}). Our construction is such that the topology generated by $\dstrongShape$ is stronger than the topology of uniform convergence of both positions and normals. 

Other proposals for distances on shape spaces, in particular on spaces of immersed circles, can be found, e.g., in the works by Michor and Mumford (\cite{MR2148075}, \cite{MR2333829}, and \cite{MR3098790}).
There, the authors focus on the study of geodesics with respect to weak Riemannian structures on $\Imm(\varSigma; \R^m) \cap C^\infty(\varSigma;\R^m)$ and on curvature properties implied by these. However, the induced geodesic distances do either not descent to a proper metric on shape space (see \cite{MR2148075}) or the metrics involve more than one derivative, rendering them inappropriate for the use with simplicial manifolds.

In Section~\ref{sec:MinimalSurfaces} we combine the main threads of Sections~\ref{sec:paramopt} and~\ref{sec:ShapeSpace} by constructing sampling and reconstruction operators.
For these operators, together with our choice of $(\cX, d_\cX)$, we prove consistency, proximity, and stability, leading to Kuratowski convergence of discrete area minimizers. To this end we rely on certain a~priori information $\cA$ and $\cA_n$ of smooth and discrete minimizers.
In particular, for smooth minimizers we assume the existence of a parameterization of Sobolev class $W^{2, \infty}$ whose differential has singular values uniformly bounded from above and below. Analogously, for discrete minimizers we assume that all embedded simplices have uniformly bounded aspect ratio. As a consequence of Kuratowski convergence we obtain our main result: Every cluster point of discrete area minimizers is a smooth minimal surface and every smooth minimal surface that globally minimizes area is the limit of a sequence of discrete area minimizers (see \autoref{theo:ConvofMinimalSurfaces}). 

\clearpage


\section{Parameterized Optimization Problems}\label{sec:paramopt}

Let $\cC$ be a topological space and let $\cF \colon \cC \to \R$ be a function. We denote the set of minimizers by
\begin{align*}
	\MinF{}{} \ceq \argmin (\cF) = \set{ x \in \cC | \cF(x) = \inf (\cF)}
\end{align*}
and sets of $\delta$-minimizers by
\begin{align*}
	\MinF{}{\delta} \ceq \argmin^\delta (\cF) = \set{ x \in \cC | \forall y \in \cC \colon \; \cF(x) \leq \cF(y) + \delta},
	\quad \text{for $\delta \in \intervalcc{0,\infty}$.}
\end{align*}
Moreover, let topological spaces $\cC_n$ and functions $\cF_n \colon \cC_n \to \R$ with minimizers $\MinF{n}{}\ceq \argmin(\cF_n)$ and $\delta$-minimizers $\MinF{n}{\delta}\ceq \argmin^\delta(\cF_n)$ be given for each $n \in \N$. One may think of $\cF_n$ as small perturbations of $\cF$ or---in the context of Ritz-Galerkin methods---as a discretization of $\cF$.
We are interested in the behavior of the sets $\MinF{n}{}$ as $n \to \infty$.
Ideally, $\MinF{n}{}$ should approximate $\MinF{}{}$ ``in some way''.
Therefore, we need a method to relate these sets. A rather general way is letting $\cC$ and $\cC_n$ 
communicate with each other via some mappings
\begin{align*}
	\inter \colon \dom(\inter) \subset \cC \to \cC_n
	\qand
	\rever \colon \dom(\rever) \subset \cC_n \to \cC. 
\end{align*}
We are going to refer to $\inter$ as the \emph{sampling operator} and to $\rever$ as the \emph{reconstruction operator}.
If one has $\MinF{n}{} \subset \dom(\rever)$ and $\MinF{}{} \subset \dom(\inter)$, the pairs of sets $(\MinF{}{}, \rever(\MinF{n}{}))$ and $(\MinF{n}{}, \inter(\MinF{}{}))$ lie in common spaces $\cC$ and $\cC_n$, respectively. Thus, they can be compared.

\begin{example}
In the finite element method, $\cC_n$ is often a finite-dimensional affine subspace of a Banach space $\cC$, $\rever$ is the canonical embedding and $\inter$ is an interpolation operator such that $\rever \circ \inter$ is a projection onto the ansatz space $\rever(\cC_n)$. Then, the Hausdorff distance between $\MinF{}{}$ and $\rever(\MinF{n}{})$ with respect to $d_\cC$ yields a canonical measure of approximation.
\end{example}

More generally, one may consider mappings $\varPsi \colon \cC \to \cX$, $\varPsi_n \colon \cC_n \to \cX$ to some metric space $(\cX,d_\cX)$ and analyze the Hausdorff distance between the sets $\varPsi(\MinF{}{})$ and $\varPsi_n(\MinF{n}{})$ therein. For example, $\varPsi$, $\varPsi_n$ could be embeddings into a space $\cX$ whose metric topology is weaker than those of $\cC$, $\cC_n$.
But $\varPsi$, $\varPsi_n$ need not to be injective at all, yielding only partial information: They could also represent restriction or trace mappings, truncations in infinite decompositions (e.g. Fourier or modal representations, projections on subspaces etc.), state variables in physical systems, (locally) averaged quantities or even quotient mappings. We suggest to view $\varPsi$, $\varPsi_n$ as nonlinear variants of \emph{test functions}.

In practice, one might profit considerably from using a~priori information on minimizers (such as higher regularity or energy bounds) in order to achieve quantitative approximation results. We are going to incoorporate a~priori information in the form of subsets $\cA \subset \cC$, $\cA_n \subset \cC_n$ such that $\varPsi(\cA \cap \MinF{}{})$, $\varPsi_n(\cA_n \cap \MinF{n}{})$ contain $\varPsi(\MinF{}{})$, $\varPsi_n(\MinF{n}{})$ respectively.\footnote{In general, it is not required that $\MinF{}{} \subset \cA$ and $\MinF{n}{} \subset \cA_n$ are subsets. In the case that $\varPsi$ is a quotient map, this is a crucial advantage (see e.g. our treatment of minimal surfaces in \autoref{sec:MinimalSurfaces}).}

We summarize the information given so far in the following (not necessarily commutative) diagrams
\begin{equation}
\begin{tikzcd}[column sep=scriptsize, row sep=scriptsize]
	\cA \cap \MinF{}{} 
		\ar[r, hook]
	& 
	\cA 
		\ar[r, hook]
	& 
	\dom(\inter) 
		\ar[rr, hook]
		\ar[ddrr, "\inter", near start]
	& 
	&
	\cC 
		\ar[dr, "\cF"]
	\\ 
	&& && &\R
	\\
	\cA_n \cap \MinF{n}{} 
		\ar[r, hook]
	& 
	\cA_n 
		\ar[r, hook]
	& 
	\dom(\rever) 
		\ar[rr, hook]
		\ar[uurr, "\rever"', near start] 
	& 
	&
	\cC_n 
		\ar[ur, "\cF_n"']
\end{tikzcd}
	,
	\label{consistency}
\end{equation}
\begin{equation}
\begin{tikzcd}[column sep=scriptsize, row sep=scriptsize]
	\cA \cap \MinF{}{} 
		\ar[r, hook]
	& 
	\cA 
		\ar[r, hook]
	& 
	\dom(\inter) 
		\ar[rr, hook]
		\ar[ddrr, "\inter", near start]
	& 
	&
	\cC 
		\ar[dr, "\varPsi"]
	\\ 
	&& && &\cX
	\\
	\cA_n \cap \MinF{n}{} 
		\ar[r, hook]
	& 
	\cA_n 
		\ar[r, hook]
	& 
	\dom(\rever) 
		\ar[rr, hook]
		\ar[uurr, "\rever"', near start] 
	& 
	&
	\cC_n 
		\ar[ur, "\varPsi_n"']
\end{tikzcd}
	.
	\label{approxi}
\end{equation}

Suppose for the moment that $\emptyset \neq \MinF{}{} \subset \cA$, $\emptyset \neq \MinF{n}{} \subset \cA_n$.
If \eqref{consistency} were commutative, one would have the following implications
\begin{enumerate}
	\item For each $x \in \MinF{}{}$ and $y \in \cC_n$:
	\begin{align*}
		\cF_n\circ \inter(x) = \cF(x) \leq \cF \circ \rever(y) = \cF_n(y),
		\quad
		\text{thus}
		\quad 
		\inter(\MinF{}{}) \subset \MinF{n}{}.
	\end{align*}
	\item For each $y \in \MinF{n}{}$ and $x \in \cC$:
	\begin{align*}
		\cF\circ \rever(y) = \cF_n(y) \leq \cF_n \circ \inter(x) = \cF(x),
		\quad
		\text{thus}
		\quad 
		\rever(\MinF{n}{}) \subset \MinF{}{}.
	\end{align*}
\end{enumerate}
If, in addition, \eqref{approxi} were commutative, this would lead to
\begin{enumerate}
	\setcounter{enumi}{2}
	\item $\varPsi(\MinF{}{}) = \varPsi_n \circ \inter(\MinF{}{}) \subset \varPsi_n(\MinF{n}{})$,
	\item $\varPsi_n(\MinF{n}{}) = \varPsi \circ \rever(\MinF{n}{}) \subset \varPsi(\MinF{}{})$,
\end{enumerate}
hence $\varPsi(\MinF{}{}) = \varPsi_n(\MinF{n}{})$.

Alas, in practice, these diagrams rarely commute. But one may hope that they \emph{almost} commute, i.e., they commute up to some errors that can be uniformly bounded, at least on the sets of a~priori information.
	
In \autoref{sec:consistency} and \autoref{sec:proximity}, we will name these non-commutativity errors and analyze what information can be deduced if these errors are sufficiently small. Afterwards, we will single out additional conditions that guarantee convergence of minimizers (\autoref{sec:stability}). 


\subsection{Consistency}\label{sec:consistency}

We start with the first diagram \eqref{consistency}.
For non-empty sets $\cA \subset \dom(\inter)$ and $\cA_n \subset \dom(\rever)$, there are two qudrilaterals of interest:
\begin{equation}
\begin{tikzcd}[column sep=scriptsize, row sep=scriptsize]
	 &\cC \ar[dr, "\cF"] 
	\\ 
	\cA  \ar[ur,hook] \ar[dr, "\inter"']
	&& \R
	\\
	& \cC_n \ar[ur, "\cF_n"']
\end{tikzcd}
\qand
\begin{tikzcd}[column sep=scriptsize, row sep=scriptsize]
	&\cC \ar[dr, "\cF"] 
	\\ 
	\cA_n  \ar[dr, hook] \ar[ur, "\rever"]
	&& \R
	\\
	& \cC_n \ar[ur,"\cF_n"']
\end{tikzcd}
.	\label{consistentdiagrampair}
\end{equation}
Each quadrilateral is equipped with its own non-cummutativity error:
\begin{definition}[Consistency]
For non-empty sets $\cA \subset \dom(\inter)$, $\cA_n \subset \dom(\rever)$, define
\begin{enumerate}
	\item the \emph{sampling consistency error}
	\begin{align}
		\delta^{\interwo}_n &\ceq \delta(\cF,\cF_n,\inter,\cA) \ceq \sup_{a\in \cA} \,\paren{\cF_n \circ \inter(a) - \cF(a)}^+,\label{ucon1}
	\end{align}
	\item the \emph{reconstruction consistency error}
	\begin{align}
		\delta^{\reverwo}_n &\ceq \delta(\cF,\cF_n,\rever,\cA_n) \ceq \sup_{a\in \cA_n} \,\paren{\cF \circ \rever(a) - \cF_n(a)}^+,\label{ucon2}
	\end{align}
	\item the \emph{total consistency error}
	\begin{align}
		\delta_n \ceq \delta(\cF,\cF_n,\inter,\rever,\cA,\cA_n) &\ceq
		\delta^{\interwo}_n + \delta^{\reverwo}_n,\label{ucon}
	\end{align}
\end{enumerate}
where $t^+ \ceq \max \{t,0\}$ denotes the non-negative part of $t\in \R$.\newline
We say, the sequence $\bigparen{(\cF_n,\inter,\rever)}_{n\in\N}$  is \emph{consistent with respect to $\cF$ on $\bigparen{(\cA,\cA_n)}_{n\in\N}$}, if its consistency error $\delta_n$ converges to $0$ for $n \to \infty$.
In that case, we also say that the sequence $\bigparen{(\cF,\cF_n,\inter,\rever)}_{n\in\N}$ is \emph{consistent on $\bigparen{(\cA,\cA_n)}_{n\in\N}$}.
\end{definition}

\begin{remark}\label{rem:lowerandupperconsistency}
A stronger notion of (total) consistency error (but also one which would be harder to verify) would be
\begin{align*}
	\sup_{a\in \cA} \,\abs{\cF_n \circ \inter(a) - \cF(a)}
	+
	\sup_{a\in \cA_n} \,\abs{\cF \circ \rever(a) - \cF_n(a)}.
\end{align*}
In light of the latter expression, our definition of consistency error could be termed \emph{upper consistency error}. 
However, our definition is sufficient for our needs and we omit ``upper'' for the sake of brevity.
Of course, one may also define a \emph{lower consistency error}, which would be the notion of choice for maximization problems.
\end{remark}

\begin{definition}\label{def:valid}
We call $\cA \subset \cC$ \emph{valid}\emph{ with respect to the pair $(\cF, \inter)$}, if $\emptyset \neq \cA \subset \dom(\inter)$ and $\inf(\cF) = \inf(\cF|_{\cA})$ hold.
Analogously, we call $\cA_n \subset \cC_n$ \emph{valid with respect to the pair $(\cF_n,\rever)$}, if $\emptyset \neq \cA_n \subset \dom(\rever)$ and $\inf(\cF_n) = \inf(\cF_n|_{\cA_n})$ hold.
For the sake of brevity, we will simply say that $\cA$, $\cA_n$ are \emph{valid} whenever $(\cF,\inter)$ and $(\cF_n,\rever)$ can be deduced from the context.
\end{definition}

The requirement that $\inf(\cF) = \inf(\cF|_{\cA})$ and $\inf(\cF_n) = \inf(\cF_n|_{\cA_n})$ are vital for the following result, which we repeatedly require throughout our exposition:

\begin{lemma}\label{infdist}
Let $\cA \subset \cC$, $\cA_n \subset\cC_n$ be valid with respect to 
$(\cF, \inter)$ and $(\cF_n,\rever)$, respectively. Assume that both the sampling consistency error $\delta^{\interwo}_n$ and the reconstruction consistency error $\delta^{\reverwo}_n$ are finite.
Then one has
\begin{align*}
	\inf(\cF_n) \leq \inf(\cF) + \delta^{\interwo}_n
	\quad \text{and} \quad
	\inf(\cF) \leq \inf(\cF_n) + \delta^{\reverwo}_n.
\end{align*}
Hence, one has either $\inf(\cF_n) = \inf(\cF) = -\infty$ or both $\inf(\cF_n)$ and $\inf(\cF)$ are finite with
\begin{align*}
	\abs{\inf(\cF_n) - \inf(\cF)} \leq \max \nparen{ \delta^{\interwo}_n,\delta^\reverwo_n}.
\end{align*}
\end{lemma}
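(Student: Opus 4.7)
The plan is to unpack the definitions and convert them into pointwise inequalities, then pass to the infimum using validity.

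First, I would observe that a finite sampling consistency error means that for every $a \in \cA$ the inequality $\cF_n(\inter(a)) - \cF(a) \leq \delta^{\interwo}_n$ holds (since the positive part dominates), so $\cF_n(\inter(a)) \leq \cF(a) + \delta^{\interwo}_n$. Taking the infimum of the left-hand side over $\inter(\cA) \subset \cC_n$ and of the right-hand side over $a \in \cA$ yields $\inf(\cF_n) \leq \inf(\cF|_\cA) + \delta^{\interwo}_n$, and the hypothesis that $\cA$ is valid replaces $\inf(\cF|_\cA)$ by $\inf(\cF)$. The analogous argument for the reconstruction operator gives the second inequality $\inf(\cF) \leq \inf(\cF_n) + \delta^{\reverwo}_n$.

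Next, I would address finiteness. Non-emptiness of $\cA$ (guaranteed by validity) lets me pick any $a \in \cA$; then $\inf(\cF_n) \leq \cF_n(\inter(a)) \leq \cF(a) + \delta^{\interwo}_n < +\infty$, so $\inf(\cF_n) < +\infty$, and symmetrically $\inf(\cF) < +\infty$. Hence neither infimum equals $+\infty$. If $\inf(\cF) = -\infty$, the first inequality together with finiteness of $\delta^{\interwo}_n$ forces $\inf(\cF_n) = -\infty$, and vice versa using the second inequality with $\delta^{\reverwo}_n$ finite. This gives the stated dichotomy: either both are $-\infty$ or both are finite.

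Finally, in the case where both infima are finite, I would simply rearrange the two inequalities into the two-sided bound $-\delta^{\reverwo}_n \leq \inf(\cF_n) - \inf(\cF) \leq \delta^{\interwo}_n$, which yields $\abs{\inf(\cF_n) - \inf(\cF)} \leq \max(\delta^{\interwo}_n, \delta^{\reverwo}_n)$. No step here is subtle; the only place to be careful is in explicitly invoking the validity hypothesis so that the infimum over $\cA$ can be replaced by the infimum over the whole space, which is the sole role played by that hypothesis in this proof.
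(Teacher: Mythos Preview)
Your proof is correct and follows essentially the same approach as the paper. The only cosmetic difference is that the paper phrases the passage to the infimum via explicit minimizing sequences $(x_m)$ in $\cA$ and $(y_m)$ in $\cA_n$, whereas you take infima directly; the two formulations are equivalent, and your additional remark excluding $+\infty$ (which the paper leaves implicit since $\cF$ and $\cF_n$ are real-valued) is a harmless clarification.
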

\begin{proof}
Choose a minimizing sequence $(x_m)_{m\in\N}$ in $\cA$ for $\cF$ and a minimizing sequence $(y_m)_{m\in\N}$ in $\cA_n$ for $\cF_n$, i.e., 
\begin{align*}
	\inf(\cF) = \lim_{m \to \infty} \cF(x_m)
	\quad
	\text{and}
	\quad
	\inf(\cF_n) = \lim_{m \to \infty} \cF_n(y_m).
\end{align*}
Then \eqref{ucon1} and \eqref{ucon2} imply
\begin{align*}
	\inf(\cF_n) \leq \cF_n(\inter(x_m)) &\leq \cF(x_m) + \delta^{\interwo}_n \converges{m\to \infty} \inf(\cF) + \delta^{\interwo}_n, 
	\\
	\inf(\cF) \leq \cF(\rever(y_m)) &\leq \cF_n(y_m) + \delta^{\reverwo}_n \converges{m\to \infty} \inf(\cF_n) + \delta^{\reverwo}_n.	
\end{align*}
\qed
\end{proof}
Knowing the total consistency error puts one into the position to compare $\delta$-min\-imizers:
\begin{lemma}\label{lowerlevelsets}
Let $\cA \subset \cC$, $\cA_n \subset\cC_n$ be valid sets.
Denote by $\delta_n$ the total consistency error.
Then one has for $\varrho \in \intervalcc{0,\infty}$
\begin{align*}
	\inter(\cA \cap \MinF{}{\varrho}) \subset \inter(\cA) \cap \MinF{n}{\varrho + \delta_n}
	\quad \text{and} \quad
	\rever(\cA_n \cap \MinF{n}{\varrho}) \subset \rever(\cA_n) \cap \MinF{}{\varrho + \delta_n}.
\end{align*}
\end{lemma}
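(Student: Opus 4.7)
The plan is to chain together three inequalities: the definition of $\delta$-minimizer (giving control relative to the infimum), the consistency error bounds on the sampling/reconstruction side, and \autoref{infdist} to bridge between $\inf(\cF)$ and $\inf(\cF_n)$.

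I would start with the first inclusion. Take $a \in \cA \cap \MinF{}{\varrho}$; clearly $\inter(a) \in \inter(\cA)$, so the only content is showing $\inter(a) \in \MinF{n}{\varrho+\delta_n}$. Unfolding the definition of $\MinF{}{\varrho}$ gives $\cF(a) \leq \inf(\cF) + \varrho$. Applying \eqref{ucon1} to $a \in \cA$ then yields $\cF_n(\inter(a)) \leq \cF(a) + \delta^\interwo_n \leq \inf(\cF) + \varrho + \delta^\interwo_n$. Invoking \autoref{infdist} (which requires precisely the validity hypothesis) gives $\inf(\cF) \leq \inf(\cF_n) + \delta^\reverwo_n$. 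Combining, for every $z \in \cC_n$,
\begin{align*}
\cF_n(\inter(a)) \leq \inf(\cF_n) + \varrho + \delta^\interwo_n + \delta^\reverwo_n \leq \cF_n(z) + \varrho + \delta_n,
\end{align*}
which is exactly $\inter(a) \in \MinF{n}{\varrho+\delta_n}$.

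The second inclusion is perfectly symmetric: for $b \in \cA_n \cap \MinF{n}{\varrho}$, use $\cF_n(b) \leq \inf(\cF_n) + \varrho$, the reconstruction bound \eqref{ucon2} to get $\cF(\rever(b)) \leq \cF_n(b) + \delta^\reverwo_n$, and the other half of \autoref{infdist} to compare $\inf(\cF_n)$ with $\inf(\cF)$. The total accumulated slack is again $\delta^\interwo_n + \delta^\reverwo_n = \delta_n$.

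I do not expect any real obstacle: the entire argument is a bookkeeping exercise once \autoref{infdist} is available, and the validity assumption on $\cA$, $\cA_n$ is used solely to justify that invocation. Two minor points worth noting: the case $\inf(\cF) = \inf(\cF_n) = -\infty$ forces $\MinF{}{\varrho} = \MinF{n}{\varrho} = \emptyset$ for finite $\varrho$, making the inclusions vacuous; and in the extremal case $\varrho = \infty$ the statement is trivial since $\MinF{}{\infty} = \cC$ and $\MinF{n}{\infty} = \cC_n$.
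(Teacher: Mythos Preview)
Your argument is correct and matches the paper's proof essentially line for line: the same chain $\cF_n(\inter(a)) \leq \cF(a) + \delta^\interwo_n \leq \inf(\cF) + \varrho + \delta^\interwo_n \leq \inf(\cF_n) + \varrho + \delta_n$ via \autoref{infdist}, followed by the symmetric argument. The only cosmetic difference is that the paper disposes of the degenerate cases $\varrho=\infty$ \emph{or} $\delta_n=\infty$ up front (you note $\varrho=\infty$ but not $\delta_n=\infty$, which is equally trivial since then $\MinF{n}{\varrho+\delta_n}=\cC_n$), before invoking \autoref{infdist} whose hypothesis requires finite consistency errors.
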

\begin{proof}
\emph{Case 1:}
$\varrho = \infty$ or $\delta_n = \infty$. The inclusions hold because of $\MinF{n}{\varrho+\delta_n}=\MinF{n}{\infty}=\cC_n$ and $\MinF{}{\varrho+\delta_n}=\MinF{}{\infty}=\cC_n$.
\newline
\emph{Case 2:}
Both $\varrho$ and $\delta_n$ are finite. Then, by \autoref{infdist}, either both $\inf(\cF)$ and $\inf(\cF_n)$ equal $-\infty$ or both of them are finite. 
\newline
\emph{Case 2.a:} $\inf(\cF)=\inf(\cF_n) = -\infty$.
All the sets $\MinF{}{\varrho}$, $\MinF{}{\varrho+\delta_n}$,
$\MinF{n}{\varrho}$, $\MinF{n}{\varrho+\delta_n}$ are empty such that the inclusions hold trivially.
\newline
\emph{Case 2.b:} $\inf(\cF), \inf(\cF_n) > -\infty$.
We discuss only the first inclusion; the second one follows analogously.
In the case that $\cA \cap \MinF{}{\varrho}$ is empty, there is nothing to show. Otherwise, let $x \in \cA \cap \MinF{}{\varrho}$. We apply \autoref{infdist} in order to estimate
\begin{align*}
	\cF_n \circ \inter(x) &\leq \cF(x) + \delta^{\interwo}_n \leq \inf (\cF) + \varrho + \delta^{\interwo}_n
	\leq \inf(\cF_n)  + \delta^{\reverwo}_n + \varrho + \delta^{\interwo}_n
\end{align*}
This leads to $\inter(x) \in \MinF{n}{\varrho+\delta_n}$ which shows the first inclusion. 
\qed
\end{proof}

\begin{remark}
For the moment, it may appear as a superfluous burden to drag along $\inter(\cA)$, $\rever(\cA_n)$ on the right hand side of the previous lemma's conclusions. However, this may be crucial when treating optimization problems with \emph{non-compact} lower level sets as we will see in \autoref{col:cptwouldbefine}. The area functional of immersed surfaces as discussed in \autoref{sec:MinimalSurfaces} is such an example (for a demonstration see \autoref{fig:catastrophe}).
\end{remark}

Before continuing we offer a brief definition of Kuratowski convergence and list properties that we require in the sequel.
More comprehensive treatments of Kuratowski convergence and discussions on the relationship between Kuratowski or \hbox{$K$-convergence} on the one hand and epigraphical or $\Gamma$-convergence on the other hand can be found in \cite{MR1491362} and \cite{MR1201152}.

\begin{definition}
Let $X$ be a topological space and 
denote by $\mathfrak{U}(x)$ the set of all open neighborhoods of $x \in X$.
For a sequence of sets $(A_n)_{n \in \N}$ in $X$ one defines the \emph{limit inferior} or \emph{inner limit} $\Li_{n\to\infty} A_n$ and the \emph{limit superior} or \emph{outer limit} $\Ls_{n\to \infty}A_n$, respectively, in the following way.
\begin{align*}
	\Li_{n \to \infty} A_n
	&\ceq \set{x \in X | \forall U \in \mathfrak{U}(x) \, \exists n \in \N \, \forall k \geq n \colon \; U \cap A_k \neq \emptyset},
	\\
	\Ls_{n \to \infty} A_n
	&\ceq \set{x \in X | \forall U \in \mathfrak{U}(x) \, \forall n \in \N \, \exists k \geq n \colon \; U \cap A_k \neq \emptyset}
\end{align*}
If $A\ceq \Ls_{n \to \infty} A_n = \Li_{n \to \infty} A_n$ agree, one says that \emph{$A_n$ converges to $A$ in the sense of Kuratowski} and writes $\Lt_{n\to \infty} A_n = A$.
\end{definition}
Both the lower and the upper limit are closed sets and one has $\Li_{n \to \infty} A_n \subset \Ls_{n \to \infty} A_n$.
One often refers to $\Ls_{n \to \infty} A_n$ as the \emph{set of cluster points} since $x$ is an element of $\Ls_{n \to \infty} A_n$ 
if and only if there is a sequence of elements $x_n \in A_n$ that has $x$ as a cluster point, i.e., there is a subsequence $(x_{n_k})_{k \in \N}$ that converges to $x$ as $k \to \infty$.
A very useful identity is
\begin{align*} 
	\Ls_{n\to \infty} A_n = \bigcap_{n\in \N} \overline{\bigcup_{k\geq n}A_k}.
\end{align*}
Lower and upper limit are monotone in the sense that
\begin{align*}
	\Li_{n\to \infty} A_n \subset \Li_{n\to \infty} B_n 
	\quad 
	\text{and} 
	\quad 
	\Ls_{n\to \infty} A_n \subset \Ls_{n\to \infty} B_n.
\end{align*}
whenever $A_n \subset B_n \subset X$, for all $n \in \N$.
These definitions allow us to formulate the following corollary.
\begin{corollary}\label{solinlowerlevelsets}
Let $\cA \subset \cC$, $\cA_n \subset \cC_n$ be valid, let $\cF \colon \cC \to \R$ be lower semi-continuous
on the set $\cB \ceq \Ls_{n \to \infty} \rever(\cA_n)$, and let the sequence $\bigparen{(\cF_n,\inter,\rever)}_{n\in\N}$ be consistent on $\bigparen{(\cA,\cA_n)}_{n\in\N}$. 
Then one has for each $\varrho \in \intervalcc{0,\infty}$:
\begin{align*}
	\Ls_{n \to \infty} 
	\rever(\cA_n \cap \MinF{n}{\varrho})
	\subset \cB \cap \MinF{}{\varrho}.
\end{align*}
\end{corollary}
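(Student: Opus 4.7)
The plan is to combine Lemma \ref{lowerlevelsets} with the definition of the outer limit and lower semi-continuity. First I would note that the edge cases are easy: if $\varrho = \infty$ then $\MinF{}{\varrho} = \cC$, so the inclusion follows from monotonicity of $\Ls$ alone. If instead $\inf(\cF_n) = -\infty$ for infinitely many $n$, then by the consistency hypothesis and \autoref{infdist} we have $\inf(\cF) = -\infty$ and all the sets $\MinF{n}{\varrho}$ appearing in the left-hand side are empty, so again there is nothing to show. Hence we may assume $\varrho < \infty$, and eventually $\delta_n < \infty$ with both infima finite.

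Next, by \autoref{lowerlevelsets} we have the inclusion
\begin{equation*}
	\rever(\cA_n \cap \MinF{n}{\varrho}) \subset \rever(\cA_n) \cap \MinF{}{\varrho + \delta_n},
\end{equation*}
from which monotonicity of $\Ls$ already gives $\Ls_{n \to \infty} \rever(\cA_n \cap \MinF{n}{\varrho}) \subset \cB$. The remaining task is to prove that every element $x$ of the left-hand side also lies in $\MinF{}{\varrho}$. Given such an $x$, by the definition of $\Ls$ I would extract, along a cofinal subsequence of indices, points $y_n \in \cA_n \cap \MinF{n}{\varrho}$ whose images $\rever(y_n)$ lie in arbitrarily small neighborhoods of $x$; in metrizable $\cC$ this amounts to a subsequence $\rever(y_{n_k}) \to x$.

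Then I would chain the three relevant inequalities in the obvious order. Since $y_n \in \MinF{n}{\varrho}$, the reconstruction consistency error gives
\begin{equation*}
	\cF(\rever(y_n)) \leq \cF_n(y_n) + \delta_n^{\reverwo} \leq \inf(\cF_n) + \varrho + \delta_n^{\reverwo},
\end{equation*}
and \autoref{infdist} further yields $\inf(\cF_n) \leq \inf(\cF) + \delta_n^{\interwo}$, hence $\cF(\rever(y_n)) \leq \inf(\cF) + \varrho + \delta_n$. Passing to the limit along the chosen cofinal indices and invoking lower semi-continuity of $\cF$ at $x \in \cB$, together with $\delta_n \to 0$ by consistency, I obtain $\cF(x) \leq \inf(\cF) + \varrho$, i.e.\ $x \in \MinF{}{\varrho}$, which completes the proof.

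The only genuinely delicate point is the last step: one must be careful that lower semi-continuity of $\cF$ on $\cB$ applies to an approaching net whose members $\rever(y_n)$ need not themselves lie in $\cB$. The natural reading here is that $\cF$ is l.s.c.\ at each point of $\cB$ as a function on all of $\cC$, which is exactly what is needed for $\cF(x) \leq \liminf_n \cF(\rever(y_n))$; apart from that subtlety the argument is a straightforward triangulation of the two consistency errors with the $\varrho$-margin.
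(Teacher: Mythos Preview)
Your argument is correct. The paper takes a different, purely set-theoretic route: writing $\eta_n \ceq \sup_{k\geq n}\delta_k$, it uses the identity $\Ls_{n}A_n = \bigcap_n \overline{\bigcup_{k\geq n}A_k}$ together with \autoref{lowerlevelsets} to obtain
\[
\Ls_{n}\rever(\cA_n\cap\MinF{n}{\varrho}) \subset \bigcap_n\bigl(\cB\cap \overline{\MinF{}{\varrho+\eta_n}}\bigr),
\]
then invokes lower semi-continuity to replace $\overline{\MinF{}{\varrho+\eta_n}}$ by $\MinF{}{\varrho+\eta_n}$ and finally $\bigcap_n \MinF{}{\varrho+\eta_n}=\MinF{}{\varrho}$. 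Your sequential approach is arguably more transparent, but note that it tacitly uses first-countability of $\cC$ when you pass from the outer limit to a convergent subsequence; the paper's closure-and-intersection argument sidesteps this and works verbatim in an arbitrary topological space (you could repair this with nets at no real cost). Your closing remark about what ``lower semi-continuous on $\cB$'' must mean is well taken and applies equally to the paper's step $\cB\cap\overline{\MinF{}{\varrho+\eta_n}}=\cB\cap\MinF{}{\varrho+\eta_n}$: in both proofs one needs lower semi-continuity of $\cF$ at each point of $\cB$ relative to the ambient space $\cC$, not merely of $\cF|_\cB$.
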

\begin{proof}
In the case that $\varrho= \infty$, this is obviously true. Hence, let us assume that $\varrho$ is finite.
Denote by $\delta_n$ the total consistency error, $\cK^\varrho_n \ceq \cA_n \cap \MinF{n}{\varrho}$ and let $\eta_n \ceq \sup \set{\delta_k | k \geq n}$. Observe that $\eta_n \searrow 0$ as $n \nearrow \infty$ by consistency. By \autoref{lowerlevelsets}, we have for all $m$, $n \in \N$ with $m \geq n$:
\begin{align*}
	\textstyle
	\rever[m](\cK^\varrho_m) &\subset \paren{\textstyle\bigcup_{k \geq m} \rever[k](\cA_k)} \cap \MinF{}{\varrho + \eta_n}.
\end{align*}
Taking closures and intersections leads to
\begin{align*}
	\bigcap_{n \in \N} \overline{\bigcup_{k\geq n} \rever[k](\cK^\varrho_k)} 
	&= 
	\bigcap_{n \in \N} \bigcap_{m \geq n} \overline{\bigcup_{k\geq m} \rever[k](\cK^\varrho_k)}
	\\
	&\subset \bigcap_{n \in \N} \biggparen{ \Bigparen{\bigcap_{m \geq n}\overline{\bigcup_{k \geq m} \rever[k](\cA_k)}} \cap 
	\overline{\MinF{}{\varrho + \eta_n}}}
	= \bigcap_{n \in \N} \paren{\cB\cap \overline{\MinF{}{\varrho + \eta_n}}}.
\end{align*}
Because $\cB$ is closed and $\cF|_\cB$ is lower-semicontinuous, one has 
\begin{align*}
	\cB\cap \overline{\MinF{}{\varrho + \eta_n}}
	 = \cB \cap \MinF{}{\varrho + \eta_n}.
\end{align*}
Finally, $\bigcap_{n \in \N} \MinF{}{\varrho + \eta_n} = \MinF{}{\varrho}$ completes the proof.
\qed
\end{proof}
Using $\varrho=0$, this leads immediately to
\begin{corollary}\label{Gammaconvuptorever}
Let $\cA \supset \MinF{}{}$, $\cA_n \supset \MinF{n}{}$ be valid.
Denote by $\delta_n$ the total consistency error.
Then one has
\begin{align*}
	\inter(\MinF{}{}) \subset \inter(\cA)\cap \MinF{n}{\delta_n}
	\quad
	\text{and}
	\quad
	\rever(\MinF{n}{}) \subset \rever(\cA_n)\cap \MinF{}{\delta_n}.
\end{align*}
Assuming consistency of $\bigparen{ (\cF,\cF_n, \inter,\rever) }_{n\in\N}$ on $\bigparen{ (\cA,\cA_n) }_{n\in\N}$ and lower semi-con\-tinu\-ity of $\cF$, cluster points of minimizers of $(\cF_n)_{n\in\N}$ are minimizers of $\cF$ in the sense that
\begin{align*}
	\Ls_{n\to \infty}\rever(\MinF{n}{}) \subset \MinF{}{}.
\end{align*}
\end{corollary}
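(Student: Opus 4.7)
The statement is a direct corollary of Lemma~\ref{lowerlevelsets} and Corollary~\ref{solinlowerlevelsets} applied with $\varrho = 0$, together with the new hypotheses $\MinF{}{} \subset \cA$ and $\MinF{n}{} \subset \cA_n$. So my plan is simply to specialize those earlier results and trace through how the assumptions collapse the intersections.

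For the first two inclusions, I would note that the hypothesis $\MinF{}{} \subset \cA$ implies $\cA \cap \MinF{}{0} = \MinF{}{}$, and analogously $\cA_n \cap \MinF{n}{0} = \MinF{n}{}$. Plugging $\varrho = 0$ into Lemma~\ref{lowerlevelsets} then gives
\begin{align*}
    \inter(\MinF{}{}) &= \inter(\cA \cap \MinF{}{0}) \subset \inter(\cA) \cap \MinF{n}{\delta_n}, \\
    \rever(\MinF{n}{}) &= \rever(\cA_n \cap \MinF{n}{0}) \subset \rever(\cA_n) \cap \MinF{}{\delta_n},
\end{align*}
which is exactly the claim. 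No further work is required here — the validity and containment hypotheses are exactly what is needed to rewrite the left-hand sides.

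For the cluster-point inclusion, I would invoke Corollary~\ref{solinlowerlevelsets} with $\varrho = 0$. The hypothesis of that corollary is lower semi-continuity of $\cF$ on $\cB \ceq \Ls_{n\to\infty}\rever(\cA_n)$, which is implied by the assumed lower semi-continuity of $\cF$ on all of $\cC$. Consistency is assumed by hypothesis. Then, using again that $\cA_n \cap \MinF{n}{0} = \MinF{n}{}$, Corollary~\ref{solinlowerlevelsets} yields
\begin{align*}
    \Ls_{n\to\infty} \rever(\MinF{n}{}) = \Ls_{n\to\infty} \rever(\cA_n \cap \MinF{n}{0}) \subset \cB \cap \MinF{}{0} \subset \MinF{}{},
\end{align*}
which is the desired conclusion.

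There is no genuine obstacle — all real work has already been done in Lemma~\ref{lowerlevelsets} (the $\delta_n$ slack argument based on \autoref{infdist}) and Corollary~\ref{solinlowerlevelsets} (the closure/intersection manipulation with $\eta_n \searrow 0$). The only subtlety worth pointing out is that the containment $\MinF{n}{} \subset \cA_n$ is essential for the equality $\cA_n \cap \MinF{n}{0} = \MinF{n}{}$; without it, one would only get the weaker statement $\Ls_n \rever(\cA_n \cap \MinF{n}{}) \subset \MinF{}{}$, which need not bound $\Ls_n \rever(\MinF{n}{})$ from above.
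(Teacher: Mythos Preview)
Your proof is correct and takes essentially the same approach as the paper: both simply specialize Lemma~\ref{lowerlevelsets} and Corollary~\ref{solinlowerlevelsets} to $\varrho = 0$, using the containments $\MinF{}{} \subset \cA$ and $\MinF{n}{} \subset \cA_n$ to rewrite the intersections. The paper in fact gives no written proof beyond noting that the corollary follows immediately from the preceding results with $\varrho = 0$.
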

\invisible{\begin{proof}\autoref{solinlowerlevelsets}\qed\end{proof}}


\subsection{Proximity}\label{sec:proximity}

The result of \autoref{Gammaconvuptorever}  only implies a lower bound for $\MinF{}{}$ by minimizers of $\cF_n$. In the following, we are going to derive an upper bound from the inclusion
\begin{align*}
	\inter(\cA \cap \MinF{}{}) \subset \inter(\cA) \cap \MinF{n}{\delta_n}.
\end{align*}
Note that $\inter(\cA) \cap \MinF{n}{\delta_n} \subset \cC_n$ is not necessarily a subset of $\cC$. However, we may transport the inclusion via $\rever$ to $\cC$; if $\rever \circ \inter$ were the identity on $\cC$ and assuming consistency, one would obtain:
\begin{align*}
	\cA \cap \MinF{}{}
	&= (\rever \circ \inter)(\cA \cap \MinF{}{}) 
	\subset \cA \cap \rever(\MinF{n}{\delta_n}).
\end{align*}
For $\MinF{}{} \subset \cA$, this would yield the desired result
\begin{align*}
	\MinF{}{} \subset \Li_{n\to \infty}\rever(\MinF{n}{\delta_n}).
\end{align*}
In the case that $\cC$ is an infinite-dimensional Banach space and $\cC_n$ a finite-dimen\-sional Banach space, $\rever \circ \inter = \id_{\cC}$ cannot occur. Even worse: In this context, $\rever \circ \inter$ is a compact operator so the sequence cannot converge uniformly to $\id_{\cC}$. Hence we have to establish a sufficiently weak notion for $\rever \circ \inter$ being ``sufficiently close to $\id_\cC$'', a notion that does not imply uniform approximation.

We do this in a slightly more general way by discussing diagram \eqref{approxi}.
At times, it may be instructive for the reader to substitute $\cX = \cC$, $\varPsi = \id_\cC$ and $\varPsi_n= \rever$.
Again, for non-empty sets $\cA \subset \dom(\inter)$, $\cA_n \subset \dom(\rever)$, there are two quadrilaterals of interest:
\begin{equation}
\begin{tikzcd}[column sep=scriptsize, row sep=scriptsize]
	 &\cC \ar[dr, "\varPsi"] 
	\\ 
	\cA  \ar[ur,hook] \ar[dr, "\inter"']
	&& \cX
	\\
	& \cC_n \ar[ur, "\varPsi_n"']
\end{tikzcd}
\qand
\begin{tikzcd}[column sep=scriptsize, row sep=scriptsize]
	&\cC \ar[dr, "\varPsi"] 
	\\ 
	\cA_n  \ar[dr, hook] \ar[ur, "\rever"]
	&& \cX
	\\
	& \cC_n \ar[ur,"\varPsi_n"']
\end{tikzcd}
.
\end{equation}
Note that for our purposes, we do not require $\varPsi$, $\varPsi_n$ to be defined on all of $\cC$, $\cC_n$ but at least on the sets $\cA\cup \rever(\cA_n)$, $\cA_n \cup \inter(\cA)$, respectively. 
Each of the two diagrams has its own non-commutativity error:
 
\begin{definition}[Proximity]	
For non-empty sets $\cA \subset \dom(\inter)$, $\cA_n \subset \dom(\rever)$, define
\begin{enumerate}
	\item the \emph{sampling proximity error}
	\begin{align}
		\varepsilon^\interwo_n
		&\ceq \varepsilon(\varPsi,\varPsi_n,\inter,\cA) 
		\ceq \sup_{a\in \cA} \, d_\cX\paren{\varPsi_n \circ \inter(a) , \varPsi(a)},\label{appr1}
	\end{align}
	\item the \emph{reconstruction proximity error}
	\begin{align}
		\varepsilon^\reverwo_n
		&\ceq \varepsilon(\varPsi,\varPsi_n,\rever,\cA_n) 
		\ceq \sup_{a\in \cA_n} \, d_\cX\paren{\varPsi \circ \rever(a) , \varPsi_n(a)},\label{appr2}
	\end{align}
	\item the \emph{proximity error}
	\begin{align}
		\varepsilon_n \ceq \varepsilon(\varPsi,\varPsi_n,\inter,\rever,\cA,\cA_n) &\ceq \max \nparen{\varepsilon^\interwo_n, \varepsilon^\reverwo_n}.\label{appr}
	\end{align}
\end{enumerate}
We say that the sequence $\bigparen{(\inter,\rever)}_{n\in\N}$ is \emph{proximate with respect to $\bigparen{(\varPsi,\varPsi_n)}_{n\in\N}$ on $\bigparen{(\cA,\cA_n)}_{n\in\N}$}, if its proximity error $\varepsilon_n$ converges to $0$ as $n \to \infty$.
\end{definition}

\begin{definition}\label{dfn:thickenings}
Let $(X,d)$ be a metric space, $A \subset X$ a subset, and $r>0$. We define $\dist(x,A) \ceq \inf \set{d(x,a) | a \in A}$ and the \emph{open} and \emph{closed $r$-thickening} of $A$ by
\begin{align*}
	B(A,r) \ceq \set{x \in X | \dist(x,A) <r }
	\qand
	\barB(A,r) \ceq \set{x \in X | \dist(x,A) \leq r }.
\end{align*}
\end{definition}

\begin{lemma}\label{consandprox}
Let $\cA \subset \cC$, $\cA_n \subset \cC_n$ be valid.
Denote by $\delta_n$ the total consistency error.
Then one has for $\varrho \in \intervalcc{0,\infty}$
\begin{align*}
	\varPsi(\cA \cap \MinF{}{\varrho}) 
	&\subset \bar B \bigparen{\varPsi_n(\inter(\cA) \cap \MinF{n}{\varrho+\delta_n}), \varepsilon^{\interwo}_n},
	\\
	\varPsi_n(\cA_n \cap \MinF{n}{\varrho}) 
	&\subset \bar B \bigparen{\varPsi(\rever(\cA_n) \cap \MinF{}{\varrho+\delta_n}), \varepsilon^{\reverwo}_n}.
\end{align*}
\end{lemma}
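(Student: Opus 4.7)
The plan is to combine \autoref{lowerlevelsets} (which tracks $\delta$-minimizers through the sampling and reconstruction operators) with the definitions of the proximity errors \eqref{appr1} and \eqref{appr2}, via a straightforward chase through the diagram.

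For the first inclusion I would fix $\varrho \in \intervalcc{0,\infty}$ and take an arbitrary $x \in \cA \cap \MinF{}{\varrho}$; if this set is empty there is nothing to prove. Applying \autoref{lowerlevelsets} yields $\inter(x) \in \inter(\cA) \cap \MinF{n}{\varrho + \delta_n}$. By the very definition of the sampling proximity error in \eqref{appr1} we have
\begin{align*}
	d_\cX\bigparen{\varPsi(x), \varPsi_n \circ \inter(x)} \leq \varepsilon^\interwo_n,
\end{align*}
so $\varPsi(x)$ lies within distance $\varepsilon^\interwo_n$ of the point $\varPsi_n(\inter(x)) \in \varPsi_n(\inter(\cA) \cap \MinF{n}{\varrho+\delta_n})$. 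By \autoref{dfn:thickenings}, this is exactly membership in $\bar B(\varPsi_n(\inter(\cA) \cap \MinF{n}{\varrho+\delta_n}), \varepsilon^\interwo_n)$, which proves the first inclusion.

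The second inclusion follows by exactly the symmetric argument: starting from $y \in \cA_n \cap \MinF{n}{\varrho}$, \autoref{lowerlevelsets} gives $\rever(y) \in \rever(\cA_n) \cap \MinF{}{\varrho+\delta_n}$, and \eqref{appr2} bounds $d_\cX(\varPsi_n(y), \varPsi \circ \rever(y))$ by $\varepsilon^\reverwo_n$.

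Since the inclusions are meant to hold whenever the right-hand sides make sense (including the degenerate cases $\varrho = \infty$ or $\delta_n = \infty$, or empty $\MinF{}{\varrho}$/$\MinF{n}{\varrho}$), no further case analysis beyond what \autoref{lowerlevelsets} already carries out is needed; each degenerate case renders the target thickening either trivially equal to $\cX$ or the source set empty. There is no real obstacle here — the lemma is essentially a packaging statement that converts sublevel-set tracking into a Hausdorff-type one-sided bound in $(\cX, d_\cX)$, and it is this packaging that makes the proximity errors usable as the quantitative measure of how close $\varPsi(\MinF{}{})$ and $\varPsi_n(\MinF{n}{})$ are in the target metric space.
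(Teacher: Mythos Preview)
Your proof is correct and follows essentially the same route as the paper's: pick an element of $\cA \cap \MinF{}{\varrho}$, push it through $\inter$ via \autoref{lowerlevelsets}, and then invoke the definition of $\varepsilon^\interwo_n$ to land in the closed thickening; the second inclusion is handled symmetrically. The only cosmetic difference is that the paper starts from a point in $\varPsi(\cA\cap\MinF{}{\varrho})$ and chooses a preimage, whereas you start directly in $\cA\cap\MinF{}{\varrho}$.
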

\begin{proof}
For $x \in \varPsi(\cA\cap \MinF{}{\varrho})$ (if existent), fix an
$a \in \cA\cap \MinF{}{\varrho}$ with $x =\varPsi(a)$.
According to \autoref{lowerlevelsets}, we have that $\inter(a) \in \inter(\cA) \cap \MinF{n}{\varrho + \delta_n}$. Now, the definition of the sampling proximity error implies 
\begin{align*}
	d_\cX(\varPsi(a),\varPsi_n(\inter(a))) \leq \varepsilon_n^{\interwo},
\end{align*}
thus
$
	\varPsi(a) \in \barB \bigparen{\varPsi_n(\inter(\cA) \cap \MinF{n}{\varrho + \delta_n}) , \varepsilon_n^{\interwo}}.
$
The proof of the second statement is analogous. 
\qed
\end{proof}

\begin{lemma}\label{Kurtowskichain}
In addition to the previous lemma, suppose $\inter(\cA) \subset \cA_n$ and proximity, i.e., $\varepsilon_n \converges{n \to \infty} 0$. Then one has
\begin{align*}
	\varPsi(\cA\cap \MinF{}{})
	&\subset \Li_{n\to \infty} \varPsi_n(\cA_n \cap \MinF{n}{\delta_n})
	\\
	&\subset \Ls_{n\to \infty} \varPsi_n(\cA_n \cap \MinF{n}{\delta_n})
	\subset \Ls_{n\to \infty} \varPsi(\rever(\cA_n) \cap \MinF{}{2\delta_n}).
\end{align*}
\end{lemma}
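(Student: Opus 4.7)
The plan is to treat the three inclusions separately. The middle inclusion $\Li \subset \Ls$ is built into the definitions and requires no work, so the real content is the first and third inclusions, each of which should fall out by combining the previous two lemmas (\hyperref[lowerlevelsets]{lowerlevelsets} and \hyperref[consandprox]{consandprox}) with the hypotheses $\inter(\cA) \subset \cA_n$ and $\varepsilon_n \to 0$.

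For the first inclusion, I would start from an arbitrary $a \in \cA\cap \MinF{}{}$ and aim to write $\varPsi(a)$ as the limit, along \emph{every} index $n$, of a sequence living in $\varPsi_n(\cA_n \cap \MinF{n}{\delta_n})$. The natural candidate is $\varPsi_n(\inter(a))$: by \autoref{lowerlevelsets} with $\varrho = 0$ one has $\inter(a) \in \inter(\cA) \cap \MinF{n}{\delta_n}$, and the assumption $\inter(\cA) \subset \cA_n$ upgrades this to $\inter(a) \in \cA_n \cap \MinF{n}{\delta_n}$. The sampling proximity bound \eqref{appr1} gives $d_\cX(\varPsi_n(\inter(a)), \varPsi(a)) \leq \varepsilon^\interwo_n \leq \varepsilon_n \to 0$, which exhibits $\varPsi(a)$ as a limit in the sense needed for membership in $\Li_{n\to\infty} \varPsi_n(\cA_n \cap \MinF{n}{\delta_n})$.

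For the third inclusion, I would apply \autoref{consandprox} with $\varrho = \delta_n$, producing
\begin{align*}
	\varPsi_n(\cA_n \cap \MinF{n}{\delta_n}) \subset \barB\bigparen{\varPsi(\rever(\cA_n) \cap \MinF{}{2\delta_n}), \varepsilon^\reverwo_n}.
\end{align*}
Now pick $x \in \Ls_{n\to\infty}\varPsi_n(\cA_n \cap \MinF{n}{\delta_n})$; by the characterization of the outer limit as cluster points, there is a subsequence $n_k$ and $x_{n_k} \in \varPsi_{n_k}(\cA_{n_k}\cap\MinF{n_k}{\delta_{n_k}})$ with $x_{n_k}\to x$. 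The inclusion above lets me select $y_{n_k} \in \varPsi(\rever(\cA_{n_k}) \cap \MinF{}{2\delta_{n_k}})$ with $d_\cX(x_{n_k}, y_{n_k}) \leq \varepsilon^\reverwo_{n_k} + 1/k$ (the additive $1/k$ absorbs the fact that the infimum defining $\dist$ need not be attained on a closed thickening). Since $\varepsilon^\reverwo_{n_k} \to 0$, the triangle inequality yields $y_{n_k} \to x$, and extending to a full-index choice (arbitrary on indices outside the subsequence) shows $x \in \Ls_{n\to\infty}\varPsi(\rever(\cA_n) \cap \MinF{}{2\delta_n})$.

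I do not anticipate a conceptual obstacle; the only points that need slight care are (i) remembering to invoke the hypothesis $\inter(\cA) \subset \cA_n$ so that the image of $\inter$ actually lands in the set $\cA_n$ that appears on the right-hand side of the first inclusion, and (ii) handling the closed thickening $\barB$ via an $\epsilon$-buffer $1/k$ before passing to the limit, so that the subsequential argument for the outer limit goes through cleanly.
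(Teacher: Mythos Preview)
Your argument is correct and uses the same two applications of the previous lemma (at $\varrho=0$ and $\varrho=\delta_n$) that the paper uses. The only difference is packaging: where you unwind the definitions of $\Li$ and $\Ls$ by hand via explicit sequences and subsequences (with the $1/k$ buffer to handle the closed thickening), the paper first records the chain of set inclusions
\[
	\varPsi(\cA\cap \MinF{}{})
	\subset \bar B \bigparen{\varPsi_n(\cA_n \cap \MinF{n}{\delta_n}), \varepsilon^{\interwo}_n}
	\subset \bar B \bigparen{\varPsi(\rever(\cA_n) \cap \MinF{}{2\delta_n}), \varepsilon^{\interwo}_n+\varepsilon^{\reverwo}_n}
\]
and then invokes the thickening robustness lemma (\autoref{lem:Kuratowskicinvafterthickening}), which says $\Li_n \bar B(A_n,r_n)=\Li_n A_n$ and $\Ls_n \bar B(A_n,r_n)=\Ls_n A_n$ whenever $r_n\to 0$, to strip the thickenings in one stroke. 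Your pointwise argument is essentially an inline proof of exactly that lemma; the paper's route is shorter here because it has already isolated this fact, but yours is self-contained. One small remark: your parenthetical about ``extending to a full-index choice'' is unnecessary for $\Ls$, since a convergent subsequence $y_{n_k}\to x$ with $y_{n_k}\in A_{n_k}$ already witnesses $x\in\Ls_n A_n$.
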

\begin{proof}
We apply the previous lemma twice: once with $\varrho=0$, once with $\varrho=\delta_n$. By the triangle inequality, one has for all $n \in \N$:
\begin{align*}
	\varPsi(\cA\cap \MinF{}{}) 
	\subset \bar B \bigparen{\varPsi_n(\cA_n \cap \MinF{n}{\delta_n}), \varepsilon^{\interwo}_n}
	\subset \bar B \bigparen{\varPsi(\rever(\cA_n) \cap \MinF{}{2\delta_n}), \varepsilon^{\interwo}_n+\varepsilon^{\reverwo}_n}.
\end{align*}
Because of the monotonicity properties of $\Li$ and $\Ls$, we may apply $\Li$ and $\Ls$ to the second term and $\Ls$  to the third without invalidating the inclusions.
The statement then follows from thickening robustness (\autoref{lem:Kuratowskicinvafterthickening}). 
\qed
\end{proof}
We now establish conditions that allow us to deduce Kuratowksi convergence (or even Hausdorff convergence).
This will be the focus of \autoref{topstability}, where \autoref{Kurtowskichain} will be used.


\subsection{Stability}\label{sec:stability}

In order to deduce set convergence of $\varPsi_n(\MinF{n}{})$ to $\varPsi(\MinF{}{})$ from \autoref{consandprox} or \autoref{Kurtowskichain},
one requires a reasonable interplay between $\cF$ and $\varPsi$. 
We term the presence of such an interplay as \emph{stability}.
In the remaining part of this section, we present a qualitative notion of stability:
It provides a rather weak, purely qualitative condition for Kuratowski convergence and is strongly related to the concept of lower semi-continuity.
For this approach, we will have to transport variational information of $\cF$ forward to $\cX$ along $\varPsi$. This is why we introduce the (variational) pushforward first.

\subsubsection{Pushforward}

\begin{definition}
Let $F \colon Y \to \intervaloc{-\infty,\infty}$ be a function and $\psi \colon Y \to X$ a mapping to a topological space $X$.
With the convention $\inf (\emptyset) = \infty$, define the \emph{(variational) pushforward of $F$ along $\psi$} by
\begin{align*}
	(\psi_\#F)(x) \ceq \inf \set{ F(y) | y\in Y \colon \psi(y) =x} = \inf_{y\in \psi^{-1}(x)} F(y).
\end{align*}
\end{definition}

\begin{example}\label{pushforwardalonginjections}
For injections, the pushforward reduces to the well-known and frequently used extension by infinity:
Assume that $\varPsi \colon \cC \hookrightarrow \cX$ and $\varPsi_n \colon \cC_n \hookrightarrow \cX$ are injections. Then $\varPsi_\# \cF$ and $(\varPsi_n)_\# \cF_n$ are given by
\begin{align*}
	(\varPsi_\# \cF)(x) &= \begin{cases}
		\cF(x), &x \in \cC\\
		\infty, &\text{else}
	\end{cases},
	&
	((\varPsi_n)_\# \cF_n)(x) &= \begin{cases}
		\cF_n(x), &x \in \cC_n\\
		\infty, &\text{else}
	\end{cases}.
\end{align*}
This allows one to treat the optimization problems for $\cF$ and $\cF_n$ on a common space. 
\end{example}

We list some elementary properties of the pushforward:

\begin{lemma}\label{pushforwardproperties}
Let $F \colon Y \to \intervaloc{-\infty,\infty}$ be a function with $\inf (F) < \infty$, $\psi \colon Y \to X$ some mapping and $\varrho \in \intervalco{0,\infty}$. Then one has
$\inf (\psi_\# F) = \inf(F)$ and
\begin{align*}
	\psi(\argmin^\varrho(F)) \subset \argmin^\varrho(\psi_\#F) = \bigcap_{\sigma>\varrho} \psi(\argmin^\sigma(F)).
\end{align*}
Equality holds, e.g., if for each $x \in \argmin^\varrho(\psi_\#F)$, the function $F|_{\psi^{-1}(x)}$ attains its infimum.
\end{lemma}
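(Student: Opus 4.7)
The plan is to chase each assertion directly from the definition $(\psi_\# F)(x) = \inf_{y \in \psi^{-1}(x)} F(y)$. First I would establish the infimum identity $\inf(\psi_\# F) = \inf(F)$: for any $y \in Y$ we have $y \in \psi^{-1}(\psi(y))$, so $(\psi_\# F)(\psi(y)) \leq F(y)$, giving $\inf(\psi_\# F) \leq \inf(F)$; conversely each value $(\psi_\# F)(x)$ is an infimum of $F$ over a subset of $Y$ (or $+\infty$ when $\psi^{-1}(x) = \emptyset$), so $\inf(\psi_\# F) \geq \inf(F)$. The inclusion $\psi(\argmin^\varrho(F)) \subset \argmin^\varrho(\psi_\# F)$ is then immediate: if $y \in \argmin^\varrho(F)$, then $(\psi_\# F)(\psi(y)) \leq F(y) \leq \inf(F) + \varrho = \inf(\psi_\# F) + \varrho$, so $\psi(y) \in \argmin^\varrho(\psi_\# F)$.

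For the equality $\argmin^\varrho(\psi_\# F) = \bigcap_{\sigma > \varrho} \psi(\argmin^\sigma(F))$ I would prove the two inclusions separately. For ``$\subset$'': given $x \in \argmin^\varrho(\psi_\# F)$ and any $\sigma > \varrho$, the estimate $(\psi_\# F)(x) \leq \inf(F) + \varrho < \inf(F) + \sigma$ together with the definition of infimum yields some $y \in \psi^{-1}(x)$ with $F(y) \leq \inf(F) + \sigma$, whence $y \in \argmin^\sigma(F)$ and $x = \psi(y) \in \psi(\argmin^\sigma(F))$. For ``$\supset$'': if $x \in \bigcap_{\sigma > \varrho} \psi(\argmin^\sigma(F))$, then each $\sigma > \varrho$ admits a lift $y_\sigma \in \psi^{-1}(x) \cap \argmin^\sigma(F)$, so $(\psi_\# F)(x) \leq F(y_\sigma) \leq \inf(F) + \sigma$; letting $\sigma \searrow \varrho$ gives $(\psi_\# F)(x) \leq \inf(F) + \varrho$, as required.

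Finally, under the attainment hypothesis, any $x \in \argmin^\varrho(\psi_\# F)$ admits a $y \in \psi^{-1}(x)$ with $F(y) = (\psi_\# F)(x) \leq \inf(F) + \varrho$, so $y \in \argmin^\varrho(F)$ and $x = \psi(y) \in \psi(\argmin^\varrho(F))$, upgrading the earlier inclusion to equality. The only genuine subtlety is the ``$\subset$'' direction above: the infimum defining $(\psi_\# F)(x)$ need not be attained, so $x$ cannot in general be lifted directly to an element of $\argmin^\varrho(F)$. Relaxing the threshold from $\varrho$ to any $\sigma > \varrho$ creates just enough slack to invoke the defining property of the infimum, which explains both why the intersection over $\sigma > \varrho$ appears in the conclusion and why attainment of $F|_{\psi^{-1}(x)}$ is exactly the hypothesis that collapses it.
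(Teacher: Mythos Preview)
Your proof is correct and follows essentially the same route as the paper's. One minor difference worth noting: for the direction $\inf(\psi_\# F) \geq \inf(F)$ you argue directly that every value $(\psi_\# F)(x)$ is an infimum of $F$ over a subset of $Y$ (or $+\infty$), whereas the paper extracts a minimizing sequence for $\psi_\# F$ and lifts it approximately to $Y$; your version is in fact the more economical of the two.
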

\begin{proof}
First, note that
$(\psi_\# F)(\psi(y)) = \inf_{a \in \psi^{-1}(\psi(y))} F(a) \leq F(y)$ holds for all $y \in Y$.
This leads to $\inf(\psi_\# F) \leq \inf(F)$ and
\begin{align}
	\psi(\argmin^\varrho(F)) \subset \argmin^\varrho(\psi_\#F)
	\quad \text{for all $\varrho \in \intervalcc{0,\infty}$}.
	\label{eq:consistencyinclusions}
\end{align}
Because of $\inf(\psi_\# F)\leq \inf(F)<\infty$, there exists a sequence $(x_n)_{n \in \N}$ with $(\psi_\#F)(x_n) < \infty$ for all $n \in \N$  and $\liminf_{n\to \infty} (\psi_\#F)(x_n) = \inf(\psi_\#F)$. For each $n \in \N$, $x_n$ has to be in the image of $\psi$ since $(\psi_\#F)(x_n)$ is finite. So, we may choose $y_n \in \psi^{-1}(x_n)$ with $F(y_n) \leq (\psi_\#F)(x_n) + \tfrac{1}{n}$. This leads to
\begin{align*}
	\inf(F) \leq \liminf_{n\to \infty} F(y_n) \leq  \liminf_{n\to \infty} \bigparen{ (\psi_\#F)(x_n) + \tfrac{1}{n}} = \inf(\psi_\# F),
\end{align*}
thus $\inf(F)=\inf(\psi_\# F)$. The case $\inf(F) =-\infty$ is also included.

From now on, let $\varrho \in \intervalco{0,\infty}$ be finite. 
We are going to show 
\begin{align}
	\argmin^\varrho(\psi_\#F) \subset \psi(\argmin^\sigma(F)) 
	\quad 
	\text{for each $\sigma > \varrho$.}\label{eq:consistencyinclusions2}
\end{align}
To this end, let $x \in \argmin^\varrho(\psi_\#F)$.
Since one has $(\psi_\#F)(x) \leq \inf(\psi_\#F) + \varrho = \inf(F) + \varrho < \infty$, there is a minimizing sequence $(y_n)_{n\in \N}$ of $F|_{\psi^{-1}(x)}$, i.e., $y_n \in \psi^{-1}(x)$ and
\begin{align*}
	F(y_n) 
	&\leq  \inf_{a \in \psi^{-1}(x)} F(a) + \tfrac{1}{n} 
	= (\psi_\# F)(x) + \tfrac{1}{n}
	\\
	&\leq \inf(\psi_\# F) + \varrho + \tfrac{1}{n}
	= \inf(F) + \varrho + \tfrac{1}{n}.
\end{align*}
For $n>\frac{1}{\sigma-\varrho}$, one has $y_n \in \argmin^\sigma(F)$ and $x = \psi(y_n) \in \psi(\argmin^\sigma(F))$. This shows $\argmin^\varrho(\psi_\#F) \subset \psi(\argmin^\sigma(F))$ for each $\sigma > \varrho$.
Now, \eqref{eq:consistencyinclusions} and \eqref{eq:consistencyinclusions2} together yield
\begin{align*}
	\argmin^\varrho(\psi_\#F) 
	&\subset \bigcap_{\sigma>\varrho} \psi(\argmin^\sigma(F))
	\subset \bigcap_{\sigma>\varrho} \argmin^\sigma(\psi_\#F)
	=\argmin^\varrho(\psi_\#F)
\end{align*}
which is the first claim.
As for the second claim, let $x \in \argmin^\varrho(\psi_\#F)$ be a point such that the function $F |_{\psi^{-1}(x)}$ attains its infimum, say at $y \in Y$. Then one has for all $z \in Y$ that
\begin{align*}
	F(y) = \inf_{a \in \psi^{-1}(x)} F(a) = (\psi_\# F)(x)
	\leq (\psi_\# F)(\psi (z)) + \varrho \leq F(z) + \varrho,
\end{align*}
showing $y \in  \argmin^\varrho(F)$ and thus $x = \psi(y) \in \psi( \argmin^\varrho(F))$.
\qed
\end{proof}

\begin{remark}\label{rem:closedandcountcpt}
Colloquially, the sufficient condition for equality in the preceding lemma can be restated as: Non-empty $\psi$-slices $\psi^{-1}(x)$, $x \in X$ are ``small'' enough to allow $F|_{\psi^{-1}(x)}$ to be minimizable.

For example, this sufficient condition is met if there is a topology on $Y$ such that for each $\varrho \in \intervalco{0,\infty}$ and each $x \in X$, the intersection $\argmin^\varrho(F) \cap \psi^{-1}(x)$ is closed and countably compact. 

Another relevant case is when $F$ is constant on $\psi$-slices, e.g. when $\psi$ is the quotient map of a group action under which $F$ is invariant.
\end{remark}

\subsubsection{Topological stability}\label{topstability}

\begin{definition}
Let $F \colon Y \to \intervaloc{-\infty,\infty}$ be a function, $\psi \colon Y \to X$ a mapping to a topological space $X$, $K\subset X$ a closed set.
We call $F$ \emph{topologically stable along $\psi$ over $K$}, if
\begin{align*}
	K\cap \psi(\argmin(F)) 
	= \bigcap_{\varrho > 0} \overline{K\cap \psi( \argmin^\varrho(F))}.
\end{align*}
\end{definition}

The notion of topological stability is a generalization of lower semi-continuity in the context of test mappings. In particular, stability can be understood as "lower semi-continuity at minimizers".

\begin{example}
Let $X$ be a topological space, $K \subset X$ a closed set, and $F \colon X \to \intervaloc{-\infty,\infty}$ a lower semi-continuous function on $K$, i.e., the lower level sets of $F|_K$ are closed in $K$ (and thus in $X$ because $K$ is closed). Then $F$ is topologically stable along $\id_X$ over $K$. 
\end{example}

\begin{example}\label{ex:extensionbyinftystab}
Let $X$ be a topological space, $Y \subset X$ a closed set, and $F \colon Y \to \intervaloc{-\infty,\infty}$ lower semi-continuous on $Y$. Denote by $\psi \colon Y \hookrightarrow X$ the inclusion mapping. Since $\psi_\push F$ is the extension by infinity (see \autoref{pushforwardalonginjections}), it is lower semi-continuous, thus topologically stable on $X$.
\end{example}

\begin{lemma}\label{lem:lscimpliesstability}
Let $Y$, $X$ be topological spaces, $F \colon Y \to \intervaloc{-\infty,\infty}$, $\psi \colon Y \to X$, and $K\subset X$ a closed set.
Assume $\inf(F)<\infty$,
$K\cap \psi(\argmin^\varrho(F)) = K\cap \argmin^\varrho(\psi_\#F)$ for all $\varrho \in \intervalco{0, \infty}$, and that $\psi_\# F$ is lower semi-continuous on $K$.

Then $F$ is topologically stable along $\psi$ over $K$.
\end{lemma}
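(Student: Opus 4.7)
The plan is to translate both sides of the desired identity into statements about $\psi_\# F$ using the assumed agreement $K\cap \psi(\argmin^\varrho(F)) = K\cap \argmin^\varrho(\psi_\#F)$ for every $\varrho \in \intervalco{0,\infty}$, and then to exploit lower semi-continuity to eliminate the closures. First, I would invoke \autoref{pushforwardproperties} to obtain $\inf(\psi_\# F)=\inf(F)$, which is finite or $-\infty$ by hypothesis. The case $\inf(F)=-\infty$ is trivial because then $\argmin^\varrho(\psi_\# F)=\emptyset$ for every finite $\varrho$ (no value of $\psi_\# F$ can be $\leq -\infty+\varrho$), so both sides of the stability identity are empty. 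From here on I would assume $\inf(\psi_\# F)\in \R$.

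Next I would rewrite, using the assumed slice-equality for each $\varrho\geq 0$,
\begin{align*}
 K\cap \psi(\argmin^\varrho(F))
 = K\cap \argmin^\varrho(\psi_\# F)
 = K\cap \set{x\in X | \psi_\#F(x) \leq \inf(\psi_\#F)+\varrho}.
\end{align*}
Because $\psi_\#F$ is lower semi-continuous on $K$, its sublevel sets in $K$ are closed in $K$; and since $K$ itself is closed in $X$, these sublevel sets are closed in $X$. Therefore $K\cap \argmin^\varrho(\psi_\# F)$ is already closed in $X$, so taking the topological closure in the right-hand side of the stability identity is redundant.

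With the closures gone, the right-hand side reduces to the nested intersection of closed sublevel sets,
\begin{align*}
 \bigcap_{\varrho>0} K\cap \argmin^\varrho(\psi_\# F)
 = K\cap \bigcap_{\varrho>0} \set{x\in X | \psi_\#F(x)\leq \inf(\psi_\#F)+\varrho}
 = K\cap \argmin(\psi_\# F),
\end{align*}
where the last equality uses that $\psi_\# F(x)\leq \inf(\psi_\#F)+\varrho$ for every $\varrho>0$ is equivalent to $\psi_\# F(x)=\inf(\psi_\#F)$. Applying the slice-equality hypothesis one more time at $\varrho=0$ rewrites this last expression as $K\cap \psi(\argmin(F))$, which is precisely the left-hand side of the stability identity.

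No step looks like a genuine obstacle; the proof is essentially bookkeeping. The only subtle point is ensuring that the closure-removal is legitimate, which requires both $K$ closed in $X$ and $\psi_\#F$ lower semi-continuous on $K$ (not merely on its sublevel sets)—both of which are included in the hypotheses. The $\inf(F)=-\infty$ edge case must be handled separately, but only in order to avoid manipulating $\inf(\psi_\#F)+\varrho=-\infty$.
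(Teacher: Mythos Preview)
Your proposal is correct and follows essentially the same route as the paper: translate via the hypothesis to $K\cap \argmin^\varrho(\psi_\#F)$, use lower semi-continuity to drop the closures, and collapse the intersection over $\varrho>0$ to $K\cap \argmin(\psi_\#F)$. If anything, you are slightly more careful than the paper in separating out the $\inf(F)=-\infty$ edge case and in noting that lower semi-continuity is only assumed on $K$, so it is $K\cap \argmin^\varrho(\psi_\#F)$ (rather than $\argmin^\varrho(\psi_\#F)$ itself) that is guaranteed closed.
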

\begin{proof}
Note that the set $\argmin^\varrho(\psi_\# F)$ is closed for all $\varrho \in \intervalco{0, \infty}$ because $\psi_\# F$ is lower semi-continuous.
One has by \autoref{pushforwardproperties}
\begin{align*}
	K\cap \psi(\argmin(F)) 
	&= K \cap \argmin(\psi_\#F) 
	= \bigcap_{\varrho>0} K \cap \argmin^\varrho(\psi_\# F)
	\\
	&
	= \bigcap_{\varrho>0} \overline{K \cap \argmin^\varrho(\psi_\# F)}
	= \bigcap_{\varrho>0} \overline{K \cap \psi(\argmin^\varrho(F))}
	.
\end{align*}
\qed
\end{proof}

We arrive at the main theorem of this section.

\begin{theorem}
\label{topstabandconv}
Let $\cA \subset \cC$ and $\cA_n \subset \cC_n$ be valid sets and let $\cK \subset \cX$ be a closed set such that
$\varPsi(\rever(\cA_n)) \subset \cK$ holds for all sufficiently large $n$.
Assume  consistency and proximity, i.e., $\delta_n \converges{n \to \infty} 0$ and $\varepsilon_n \converges{n \to \infty} 0$, and topological stability of $\cF$ along $\varPsi$ over $\cK$.
Then one has
\begin{align*}
	\Ls_{n\to \infty}\varPsi_n(\cA_n \cap \MinF{n}{}) 
	\subset \cK \cap \varPsi(\MinF{}{}).
\end{align*}
If $\varPsi(\MinF{}{}) \subset \varPsi(\cA\cap\MinF{}{}) \cap \cK$ and $\inter(\cA) \subset \cA_n$ hold for all sufficiently large $n$, then one has Kuratowski convergence
\begin{align*}
	\varPsi(\MinF{}{}) 
	=
	\Lt_{n\to \infty} \varPsi_n(\cA_n \cap \MinF{n}{\delta_n}).
\end{align*}
\end{theorem}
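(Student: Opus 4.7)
The strategy is to squeeze $\Ls_{n\to\infty}\varPsi_n(\cA_n \cap \MinF{n}{\delta_n})$ between $\cK \cap \varPsi(\MinF{}{})$ from above and $\varPsi(\MinF{}{})$ from below, using topological stability to swallow the slack introduced by $\delta_n$-minimizers.

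For the outer-limit bound, I would apply \autoref{consandprox} at level $\varrho = \delta_n$, so that $\varPsi_n(\cA_n \cap \MinF{n}{\delta_n})$ sits inside the $\varepsilon^{\reverwo}_n$-thickening of $\varPsi(\rever(\cA_n) \cap \MinF{}{2\delta_n})$. For all sufficiently large $n$, the hypothesis $\varPsi(\rever(\cA_n)) \subset \cK$ allows me to replace this inner set by $\cK \cap \varPsi(\MinF{}{2\delta_n})$. Taking $\Ls_{n\to\infty}$ and absorbing the vanishing thickening via \autoref{lem:Kuratowskicinvafterthickening} gives
\[
\Ls_{n\to\infty}\varPsi_n(\cA_n \cap \MinF{n}{\delta_n}) \subset \Ls_{n\to\infty}\bigparen{\cK \cap \varPsi(\MinF{}{2\delta_n})}.
\]
For any fixed $\varrho > 0$, consistency makes $2\delta_n \leq \varrho$ eventually, so the right-hand outer limit is contained in $\overline{\cK \cap \varPsi(\MinF{}{\varrho})}$. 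Intersecting over all $\varrho > 0$ and invoking topological stability of $\cF$ along $\varPsi$ over $\cK$ collapses this intersection to $\cK \cap \varPsi(\MinF{}{})$. Since $\MinF{n}{} \subset \MinF{n}{\delta_n}$, the first conclusion of the theorem follows.

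For the Kuratowski convergence statement, the remaining hypothesis $\inter(\cA) \subset \cA_n$ unlocks \autoref{Kurtowskichain}, which provides the lower bound $\varPsi(\cA \cap \MinF{}{}) \subset \Li_{n\to\infty} \varPsi_n(\cA_n \cap \MinF{n}{\delta_n})$. The additional assumption $\varPsi(\MinF{}{}) \subset \varPsi(\cA \cap \MinF{}{}) \cap \cK$ upgrades this to $\varPsi(\MinF{}{}) \subset \Li_{n\to\infty} \varPsi_n(\cA_n \cap \MinF{n}{\delta_n})$. Combining with the upper bound $\Ls_{n\to\infty} \varPsi_n(\cA_n \cap \MinF{n}{\delta_n}) \subset \cK \cap \varPsi(\MinF{}{}) \subset \varPsi(\MinF{}{})$ and the universal inclusion $\Li \subset \Ls$ sandwiches all three expressions into equality, which is precisely Kuratowski convergence.

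The only conceptually nontrivial step is the use of topological stability to pass from the $2\delta_n$-enlarged sublevel sets back to actual minimizers; everything else is bookkeeping of the previously established proximity/consistency inclusions combined with monotonicity and thickening-robustness of Kuratowski limits. I therefore anticipate no serious obstacle beyond arranging the stability hypothesis in the exact form required for the final intersection.
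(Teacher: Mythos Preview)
Your proposal is correct and follows essentially the same route as the paper's proof: apply \autoref{consandprox} to push $\varPsi_n(\cA_n \cap \MinF{n}{\delta_n})$ into a thickening of $\cK \cap \varPsi(\MinF{}{2\delta_n})$, strip the thickening via \autoref{lem:Kuratowskicinvafterthickening}, collapse the $2\delta_n$-slack with topological stability, and close the chain with \autoref{Kurtowskichain}. The only cosmetic difference is that the paper first proves the $\Ls$-bound for exact minimizers $\MinF{n}{}$ directly (using $\varrho=0$ in \autoref{consandprox}), whereas you establish the bound for $\MinF{n}{\delta_n}$ and deduce the $\MinF{n}{}$ case by monotonicity; both are equivalent.
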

\begin{proof}
From the second statement of \autoref{consandprox} with $\varrho=0$, we have for sufficiently large $n \in \N$:
\begin{align*}
	\Ls_{n\to \infty} \varPsi_n(\cA_n \cap \MinF{n}{}) 
	&\subset \Ls_{n\to \infty} \barB\bigparen{\varPsi(\rever(\cA_n) \cap \MinF{}{\delta_n}), \varepsilon^{\reverwo}_n}
	\\
	&\subset \Ls_{n\to \infty} \barB(\cK \cap \varPsi(\MinF{}{\delta_n}), \varepsilon^{\reverwo}_n).
\end{align*}
Using \autoref{lem:Kuratowskicinvafterthickening} with $A_n = \cK \cap \varPsi(\MinF{}{\delta_n})$ and $r_n = \varepsilon^{\reverwo}_n$, we obtain
\begin{align*}
	\Ls_{n\to \infty} \barB(\cK \cap \varPsi(\MinF{}{\delta_n}), \varepsilon^{\reverwo}_n)
	= \Ls_{n\to \infty} \cK \cap \varPsi(\MinF{}{\delta_n})
	= \bigcap_{n \in \N} \overline{\cK \cap \varPsi(\MinF{}{\delta_n})}.
\end{align*}
Now, topological stability of $\cF$ along $\varPsi$ over $\cK$ leads to the first statement.
In the same way, one shows
\begin{align*}
	\Ls_{n\to \infty} \varPsi(\rever(\cA_n) \cap \MinF{}{2\delta_n}) 
	\subset 	\Ls_{n\to \infty} \cK \cap \varPsi(\MinF{}{2\delta_n}) 
	\subset \cK \cap \varPsi(\MinF{}{}).
\end{align*}
The condition $\inter(\cA) \subset \cA_n$ allows us to use
\autoref{Kurtowskichain}, leading to
\begin{align*}
	\varPsi(\cA\cap \MinF{}{})
	&\subset \Li_{n\to \infty} \varPsi_n(\cA_n \cap \MinF{n}{\delta_n})
	\\
	&\subset \Ls_{n\to \infty} \varPsi_n(\cA_n \cap \MinF{n}{\delta_n})
	\subset \Ls_{n\to \infty} \varPsi(\rever(\cA_n) \cap \MinF{}{2\delta_n})
	\subset \cK \cap \varPsi(\MinF{}{}).
\end{align*}
If both $\varPsi(\cA\cap\MinF{}{})$ and $\cK$ contain all images of minimizers $\varPsi(\MinF{}{})$, the above chain of inclusions is closed. In particular, $\Lt_{n\to \infty} \varPsi_n(\cA_n \cap \MinF{n}{\delta_n})$ exists and coincides with $\varPsi(\MinF{}{})$.
\qed
\end{proof}

In compact metric spaces, Kuratowski and Hausdorff convergence are equivalent.
This leads us immediately to the following result:
\begin{corollary}\label{col:cptwouldbefine}
In addition to all the conditions in \autoref{topstabandconv}, assume that the sets $\cM$ and $\cA_n \cap \MinF{n}{\delta_n}$ are non-empty for all sufficiently large $n$ and that the set $\cK \subset \cX$  is compact. Then one has Hausdorff convergence, i.e.,
\begin{align*}
	\lim_{n \to \infty} \dist_\cX \paren{ \varPsi(\MinF{}{}) , \varPsi_n(\cA_n \cap \MinF{n}{\delta_n}) } = 0.
\end{align*}
\end{corollary}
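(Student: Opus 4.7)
The plan is to upgrade the Kuratowski convergence supplied by \autoref{topstabandconv} to Hausdorff convergence using the standard fact that for non-empty closed subsets of a compact metric space the two notions coincide. Write $A \ceq \varPsi(\MinF{}{})$ and $\tilde A_n \ceq \varPsi_n(\cA_n \cap \MinF{n}{\delta_n})$, so that $A$ and (eventually) all $\tilde A_n$ are non-empty by assumption, and \autoref{topstabandconv} yields $A = \Lt_{n\to\infty} \tilde A_n$ together with $A \subset \cK$.

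Since $\dist_\cX(A, \tilde A_n) = \max\nparen{e_1(n),\,e_2(n)}$ with $e_1(n) \ceq \sup_{x\in A} \dist(x, \tilde A_n)$ and $e_2(n) \ceq \sup_{y \in \tilde A_n} \dist(y, A)$, I would prove each half separately by contradiction. For $e_1(n) \to 0$: if it failed, extract $\epsilon>0$ and $x_n \in A$ with $\dist(x_n, \tilde A_n) \geq \epsilon$; compactness of $A \subset \cK$ gives a subsequence $x_{n_k} \to x \in A$, and the inclusion $A \subset \Li_{n\to\infty} \tilde A_n$ provides $y_k \in \tilde A_{n_k}$ with $y_k \to x$, forcing $\dist(x_{n_k}, \tilde A_{n_k}) \to 0$, a contradiction.

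The direction $e_2(n) \to 0$ is where the main, albeit mild, obstacle arises: the sets $\tilde A_n$ need not lie in $\cK$ itself, only in its $\varepsilon_n^{\reverwo}$-thickening. From the reconstruction proximity error together with the hypothesis $\varPsi(\rever(\cA_n)) \subset \cK$ one has $\tilde A_n \subset \barB(\cK, \varepsilon_n^{\reverwo})$. If $e_2(n) \not\to 0$, pick $\epsilon>0$ and $y_n \in \tilde A_n$ with $\dist(y_n, A) \geq \epsilon$, and choose $z_n \in \cK$ with $d_\cX(y_n, z_n) \to 0$; extracting a convergent subsequence of $(z_n)$ via compactness of $\cK$ forces $y_{n_k} \to z \in \Ls_{n\to\infty}\tilde A_n = A$, contradicting the separation. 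Both halves of the Hausdorff distance thus vanish in the limit, establishing the claim.
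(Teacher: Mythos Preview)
Your proof is correct and follows the same idea as the paper, which simply records that Kuratowski and Hausdorff convergence coincide for non-empty closed subsets of a compact metric space and declares the corollary immediate. You have unpacked that one-line appeal into the two standard half-distance arguments, and in doing so you explicitly address a point the paper leaves implicit: the sets $\varPsi_n(\cA_n \cap \MinF{n}{\delta_n})$ need not sit inside $\cK$ itself, only inside the shrinking thickening $\barB(\cK,\varepsilon_n^{\reverwo})$, and you use this together with compactness of $\cK$ to extract the required convergent subsequence for the $e_2$ direction. This extra care is warranted, since the cleanest form of the Kuratowski--Hausdorff equivalence assumes all sets live in a fixed compact space; your reduction via nearby points $z_n \in \cK$ is exactly the right way to close that gap.
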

\invisible{\begin{proof}\autoref{topstabandconv}\qed\end{proof}}


\section{Shape Space}\label{sec:ShapeSpace}

\subsection{The Space of Inner Products}\label{posdef}

We require a suitable shape space of immersed manifolds for our applications to minimal surfaces. Therefore, we define a space of parameterized (locally) Lipschitz immersions and equip it with a reparameterization-invariant distance. This distance is in terms of zeroth and first derivatives of immersions. It descends to a distance on \emph{shape space}, i.e., the quotient space of unparameterized, immersed manifolds.\footnote{Contrary to the meaning we associate to it, the term ``shape space'' is frequently used for certain classes of subsets of $\R^3$ \emph{modulo the action of the Euclidean group.}}

Before we study Lipschitz immersions we point out some properties of distances between inner products on finite dimensional vector spaces.

Let $V$ be a $k$-dimensional real vector space with $k \in \N$ and let $\PosDef(V)$ denote the space of positive definite bilinear forms on $V$.
The group $\Gl(V)$ acts from the right on $\PosDef(V)$ via pullback:
\begin{align*}
	\Gl(V) \times \PosDef(V) \to \PosDef(V),
	\quad
	(A,b) \mapsto A^\pull b = b(A \cdot, A \cdot).
\end{align*}
As an open set in the vector space $\SymBil(V) \subset V'\otimes V'$ of symmetric bilinear forms on $V$, the space $\PosDef(V)$ is a smooth manifold with tangent bundle given by $T_b \PosDef(V) \ceq \SymBil(V)$.
We equip $\PosDef(V)$ with the following Riemannian structure $g_\PosDef$:
\begin{align*}
	g_\PosDef\at_b(X,Y) \ceq \innerprod{X,Y}_b \quad \text{for all $X,\,Y \in T_b \PosDef(V) =\SymBil(V)$,}
\end{align*}
where $\innerprod{\cdot,\cdot}_b$ denotes the inner product on $V'\otimes V'$ that is induced by $b$. Note that the $\Gl(V)$-action above is isometric with respect to this Riemannian metric.

For a basis $e=(e_1 ,\dotsc,e_k)$ of $V$, we define the \emph{Gram mapping} which maps a bilinear form to its Gram matrix:
\begin{align*}
	\Gram_e \colon V' \otimes V' \to \Mat_{k\times k}(\R),
	\quad X \mapsto (X(e_i,e_j))_{1\leq i,j\leq k}.
\end{align*}
In terms of the Gram mapping, one has the following representation:
\begin{align}
	g_\PosDef(X,Y)\at_b=
	\innerprod{X,Y}_b = \tr (\Gram_e(b)^{-1} \Gram_e(X)^\transp \Gram_e(b)^{-1}\Gram_e(Y)),
	\label{eq:innerprodinGram}
\end{align}
where $b \in \PosDef(V)$ and $X$, $Y \in T_b \PosDef(V)$.

One can show that $(\PosDef(V),g_\PosDef)$ is a geodesically convex and complete Riemannian manifold (see \cite{MR2782125}). The geodesic starting at $b \in \PosDef(V)$ in direction $X \in T_b \PosDef(V)$ can be expressed as
\begin{align*}
	\gamma(t) = \Gram_e^{-1} \Bigparen{
	\mathbf L^\transp \exp\bigparen{t \, \mathbf L^{-\transp}\Gram_e(X) \mathbf L^{-1} } \, \mathbf L
	},
\end{align*}
where $\mathbf{L} \in \Mat_{k \times k}(V)$ can be any matrix with $\mathbf{L}^\transp\mathbf{L} = \Gram_e(b)$.
Moreover, the geodesic distance between $b$ and $c \in \PosDef(V)$ is given by
\begin{align}
	\dPosDef(b,c) 
	= \nabs{\log \bigparen{\mathbf{L}^{-\transp} \Gram_e(c) \mathbf{L}^{-1}}}
	= \paren{\textstyle \sum_{i=1}^k \log(\lambda_i)^2}^\frac{1}{2}
	\label{cor:geodesicdistanceofPosDef}
\end{align}
Here, $\abs{\cdot}$ denotes the Frobenius norm of matrices and $\lambda_1, \dotsc, \lambda_k$ are the eigenvalues of $\Gram_e(g)$ with respect to $\Gram_e(b)$.

\subsection{Oriented Grassmannians}

For our metric on shape space we additionally require a notion of distance for maps between oriented Grassmannians. 

Let $V_1$, $V_2$, and $V$ be real vector spaces and denote by $\Inj(V_1;V_2) \subset \Hom(V_1;V_2)$ the space of all injective linear maps from $V_1$ to $V_2$.
As a short hand notation, we denote the Grassmannian $\orGr_k(V)$ of oriented $k$-dimensional linear subspaces of $V$ by $\Ray[k](V)$. Of course, $\Ray[k](V) = \emptyset$, if $k> \dim(V)$. 
Every element 
$W \in \Ray[k](V_1)$ can be represented by  an oriented linear independent system $(v_1,\dotsc,v_k)$ in $V_1$ and we write $W = [v_1,\dotsc,v_k]$.

This allows us to define the mapping 
$\Ray[k] \colon \Inj(V_1;V_2) \to C^0(\Ray[k](V_1);\Ray[k](V_2))$,
by setting
$
	\Ray[k](A)([v_1,\dotsc,v_k]) \ceq [A\,v_1,\dotsc,A\,v_k]
$	
for each $A \in \Inj(V_1;V_2)$
and each oriented linear independent system $(v_1,\dotsc,v_k)$ in $V_1$.
Observe that $\Ray[k](A)$ is also injective.

Let $\dRay[k]$ be a $\Ortho(V_2,g_2)$-invariant distance on $\Ray[k](V_2)$.
Since oriented Grassmannians are compact, one may define the distance $\dinfty$ on $C^0(\Ray[k](V_1);\Ray[k](V_2))$ by
\begin{align*}
	\dinfty(h_1,h_2) 
	\ceq \sup_{W \in \Ray[k](V_1)} \dRay[k] \paren{ h_1(W),h_2(W)}.
\end{align*}
Indeed, the compact-open topology on $C^0(\Ray[k](V_1);\Ray[k](V_2))$ is generated by $\dinfty$. Moreover, the image of $\Ray[k]$ is a closed subset of $C^0(\Ray[k](V_1);\Ray[k](V_2))$, thus $\Ray[k](V_1;V_2) \ceq \Ray[k](\Inj(V_1;V_2))$ equipped with the restriction of $\dinfty$ is a complete metric space.

Observe that $\Ray[k](A\,B) = \Ray[k](A)\circ \Ray[k](B)$ holds for $B \in \Inj(V_1;V_2)$ and $A \in \Inj(V_2;V)$, thus $\Ray[k]$ is a covariant functor. Moreover, $\dinfty$ is a $\Gl(V_1)$-invariant metric on $\Ray[k](V_1;V_2)$.

\begin{example}\label{ex:dRay}
We are mostly interested in the \emph{ray spaces} $\Ray(V_1;V_2)$. For an inner product $g_2$ on $V_2$, an $\Ortho(V_2,g_2)$-invariant distance $\dRay$ on $\Ray(V_2)$ is given by 
\begin{align*}
	\dRay([u],[v]) = \measuredangle(u,v) = \arccos \paren{\frac{g_2(u,v)}{\nabs{u}_{g_2}\,\nabs{v}_{g_2}}}.
\end{align*}
Then we have for $A$, $B \in \Inj(V_1;V_2)$:
\begin{align*}
	\dinfty( \Ray(A),\Ray(B)) 
	= \sup_{u \in V_1\setminus\{0\}} \measuredangle( A \,u , B\, u)
	= \sup_{u \in V_1\setminus\{0\}} \arccos \paren{
		\frac{g_2(A \,u , B\, u)}{\abs{A \,u }_{g_2} \abs{B\,u}_{g_2}}
		}.
\end{align*}
\end{example}

\subsection{Lipschitz Immersions}

Throughout this section, $\varSigma$ denotes a compact, $k$-dimensional smooth manifold with smooth boundary.
Let $g$ be a smooth Riemannian metric on $\varSigma$.
With a slight abuse of notation, we denote with $\vol_g$ not only the Riemannian density induced by $g$, but also the complete measure induced by it. Define the locally trivial fiber bundle $\pi \colon \PosDef_\varSigma \to \varSigma$ by
$\PosDef_\varSigma\at_x \ceq \PosDef(T_x\varSigma)$ for all $x\in \varSigma$, where $\PosDef(T_x\varSigma)$ denotes the manifold of positive-definite, symmetric bilinear forms on the tangent space $T_x\varSigma$ (see \autoref{posdef}). We equip each fiber with the distance function $\dPosDef$ and define
\begin{align*}
	\PosDefSec(\varSigma)
	\ceq 
	\Set{ 
	\text{$b \colon \varSigma \to \PosDef_\varSigma$ $\vol_g$-measurable.} 
	|
	\begin{array}{cc}	
	\text{$\pi \circ b = \id_\varSigma$ $\vol_g$-a.\,e.,}\\
	\text{$\esssup_{x\in\varSigma} \dPosDef(b\at_x,g\at_x) < \infty$} 
	\end{array}
	}\slash \sim
\end{align*}
with the equivalence relation $b_1 \sim b_2$ if $b_1 = b_2$ holds \hbox{$\vol_g$-almost} everywhere.

We introduce the distance $\dPosDefSec$ on $\PosDefSec(\varSigma)$ by
\begin{align*}
	\dPosDefSec( b_1, b_2) \ceq \esssup_{x\in \varSigma} \, \dPosDef \bigparen{ b_1\at_x,b_2\at_x}.
\end{align*}
Note that neither the space $\PosDefSec(\varSigma)$ nor the distance $\dPosDefSec$ depend on the choice of $g$ since we assume $\varSigma$ to be compact.

\begin{definition}\label{dfn:Imm}
Denote by $g_0$ the Euclidean metric on $\R^m$.
For $f \in W^{1,\infty}(\varSigma;\R^m)$, the tangent map $\dd f$ exists at almost every point $x \in \varSigma$ and we obtain an almost everywhere defined pullback $f^\pull g_0$.
Following \cite{MR3008339}, we define the \emph{space of Lipschitz immersions} as
\begin{align*}
	\Imm(\varSigma;\R^m) 
	&\ceq 
	\set{ f \in W^{1,\infty}(\varSigma;\R^m) | f^\pull g_0 \in \PosDefSec(\varSigma)}
\end{align*}
and equip it with the distance
\begin{align*}
	\dImm(f_1,f_2)
	\ceq \nnorm{f_1-f_2}_{L^\infty} 
	+ \dPosDefSec(f_1^\pull g_0,f_2^\pull g_0) + \esssup_{x \in \varSigma} \; \dinfty\bigparen{\Ray(\dd f_1\at_x) , \Ray(\dd f_2 \at_x)}.
\end{align*}
for $f_1$, $f_2 \in \Imm(\varSigma;\R^m)$.
\end{definition}
As $\Imm(\varSigma;\R^m)$ is an open subset of $W^{1,\infty}(\varSigma;\R^m)$ (see \autoref{cor:W1inftyembLip}), the following theorem comes as a surprise.
\begin{theorem}\label{theo:Immiscomplete}
The metric space $(\Imm(\varSigma;\R^m), \dImm)$ is complete.
\end{theorem}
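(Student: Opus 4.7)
The plan is to produce the limit of a Cauchy sequence $(f_n)_{n \in \N}$ in three stages, one for each summand of $\dImm$, and then glue the pieces together into a genuine Lipschitz immersion.

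First, Cauchyness in $L^\infty$ gives a uniform limit $f \colon \varSigma \to \R^m$. Cauchyness of $b_n \ceq f_n^\pull g_0$ in $(\PosDefSec(\varSigma),\dPosDefSec)$, together with the fiberwise completeness of $(\PosDef(T_x\varSigma),\dPosDef)$ recalled in \autoref{posdef} and the fact that $\dPosDefSec$ is an essential supremum of fiberwise distances, yields a limit $b \in \PosDefSec(\varSigma)$ with $\dPosDefSec(b_n, b) \to 0$. Similarly, Cauchyness of $r_n(x) \ceq \Ray(\dd f_n|_x)$ with respect to $\esssup_{x\in\varSigma} \dinfty$ gives, by the completeness of the ray-map space established in the discussion of oriented Grassmannians, a measurable limit $r$.

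The crux is to reassemble $(b, r)$ into a bona fide tangent map. The key observation is that for almost every $x$ the pair $(b_n|_x, r_n(x))$ determines $\dd f_n|_x$ uniquely: for each $v \in T_x\varSigma$, the vector $\dd f_n|_x(v)$ has magnitude $\sqrt{b_n|_x(v,v)}$ and direction $r_n(x)([v])$. Combined with the identity
\begin{align*}
\abs{\dd f_n|_x(v) - \dd f_m|_x(v)}_{g_0}^2 = b_n|_x(v,v) + b_m|_x(v,v) - 2 \sqrt{b_n|_x(v,v)\,b_m|_x(v,v)}\cos \theta_{nm}(x,v),
\end{align*}
where $\theta_{nm}(x,v) \leq \dinfty(r_n(x), r_m(x))$ bounds the angle between $\dd f_n|_x(v)$ and $\dd f_m|_x(v)$, this lets us convert Cauchyness of $(b_n)$ and $(r_n)$ into Cauchyness of $(\dd f_n)$ in $L^\infty(\varSigma; \Hom(T\varSigma; \R^m))$. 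The quantitative ingredient is that $\dPosDef$-convergence implies Frobenius convergence of Gram matrices (essentially because the matrix logarithm is bi-Lipschitz on compact subsets of positive matrices), so $b_n|_x(v,v)$ converges uniformly over unit vectors $v$ and essentially uniformly in $x$.

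Denote the $L^\infty$ limit of $\dd f_n$ by $A$. Standard distributional arguments, or Rademacher's theorem (since the $f_n$ are uniformly Lipschitz and converge uniformly), identify $A = \dd f$ almost everywhere, so $f \in W^{1,\infty}(\varSigma; \R^m)$. Moreover $f^\pull g_0 = A^\pull g_0 = b \in \PosDefSec(\varSigma)$, so $f$ lies in $\Imm(\varSigma; \R^m)$, and reading off each of the three summands separately gives $\dImm(f_n, f) \to 0$. I expect the main obstacle to be keeping the uniformity in both $v$ and $x$ throughout the reassembly step; in particular, turning abstract $\dPosDef$-Cauchyness into pointwise quantitative bounds on $b_n|_x(v,v)$ requires a uniform-in-$n$ lower bound on the smallest eigenvalue of $b_n|_x$ relative to $g|_x$, which itself follows from the Cauchy property.
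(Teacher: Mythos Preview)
Your proposal is correct but takes a different route from the paper. The paper's proof is very short: it leverages the two appendix lemmas \autoref{cor:LipembW1infty} and \autoref{cor:W1inftyembLip}, which together say that $\dImm$ and the $W^{1,\infty}_g$-distance are locally bi-Lipschitz equivalent on $\dImm$-bounded subsets of $\Imm(\varSigma;\R^m)$. Once one notes that a $\dImm$-Cauchy sequence has uniformly bounded $\dPosDefSec(g, f_n^\pull g_0)$, \autoref{cor:LipembW1infty} immediately yields that $(f_n)$ is $W^{1,\infty}_g$-Cauchy and hence converges in $W^{1,\infty}_g$ to some $f$; then \autoref{cor:W1inftyembLip} shows $f \in \Imm(\varSigma;\R^m)$ and $\dImm(f_n,f)\to 0$. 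No separate limits $b$, $r$ or reassembly are needed.

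Your approach instead builds the three summand limits by hand and uses the law-of-cosines identity to show $(\dd f_n)$ is $L^\infty$-Cauchy directly. This is essentially an inline re-derivation of \autoref{cor:LipembW1infty} (compare your formula for $\nabs{\dd f_n(v)-\dd f_m(v)}^2$ with the proof of \autoref{lem:InjembHom}). It is more self-contained and avoids forward references to the appendix, but the detour through separate limits $b$ and $r$ is not actually used in your Cauchyness argument and could be dropped: once $(\dd f_n)$ is $L^\infty$-Cauchy, $b=f^\pull g_0$ and $r=\Ray(\dd f)$ come for free. The paper's packaging into reusable lemmas pays dividends because the same local equivalence is invoked repeatedly elsewhere (e.g., in \autoref{quotisHsd} and \autoref{goodapriories}).
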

\begin{proof}
Fix a smooth Riemannian metric $g$ on $\varSigma$
and a Cauchy sequence $(f_\alpha)_{\alpha\in \N}$ with respect to $\dImm$.
Then $(f_\alpha^\pull g_0)_{\alpha \in \N}$ is a Cauchy sequence in $(\PosDefSec(\varSigma),\dPosDefSec)$ and thus it is bounded, i.e., there is an $\ell >0$ with
$\ell_\alpha \ceq \dPosDefSec(g,f_\alpha^\pull g_0) < \ell$ for all $\alpha \in \N$.
Moreover, there is an $N \in \N$ such that one has 
$\dPosDefSec(f_{\alpha}^\pull g_0,f_{\beta}^\pull g_0) < \frac{5}{2}$
for all $\alpha$, $\beta \geq N$.
By \autoref{cor:LipembW1infty}, one obtains
\begin{align*}
	\nnorm{f_{\alpha}-f_{\beta}}_{W^{1,\infty}_g}
	\leq \exp(\ell_\alpha)\, \dImm(f_\alpha,f_\beta)
	\leq \exp(\ell)\, \dImm(f_\alpha,f_\beta).
\end{align*}
Thus, $(f_\alpha)_{\alpha\in \N}$ is also a Cauchy sequence in $W^{1,\infty}_g(\varSigma;\R^m)$. Hence there is an $f \in W^{1,\infty}_g(\varSigma;\R^m)$ such that $\nnorm{f_\alpha-f}_{W^{1,\infty}_g} \converges{\alpha \to \infty} 0$.
For sufficiently large $\alpha \in \N$, one has
$\nnorm{f_\alpha-f}_{W^{1,\infty}_g} < \tfrac{1}{3}\exp \paren{-\tfrac{3}{2} \ell}<\frac{1}{3}\exp(-\frac{3}{2} \ell_\alpha)$ such that we may apply \autoref{cor:W1inftyembLip}  in order to show that $f \in \Imm(\varSigma;\R^m)$ and to obtain
\begin{align*}
	\dImm(f_\alpha,f) 
	\leq 18 \, \ee^{\frac{3}{2}\ell_\alpha} \nnorm{f_\alpha-f}_{W^{1,\infty}_g}
	\leq 18 \, \ee^{\frac{3}{2}\ell} \nnorm{f_\alpha-f}_{W^{1,\infty}_g}
	\converges{\alpha\to \infty} 0.
\end{align*}
Hence $f_\alpha$ converges to $f$ in $(\Imm(\varSigma;\R^m),\dImm)$.
\qed
\end{proof}

We additionally require Lipschitz immersions that remain Lipschitz immersions when restricted to the boundary. If the embedding $j \colon \partial \varSigma \hookrightarrow \varSigma$ of the boundary and the Riemannian metric are suffciently smooth, then the \emph{trace operator} 
\begin{align*}
	j^\pull \colon W^{1,\infty}_g(\varSigma;\R^m) \to W^{1,\infty}_{j^\pull g}(\partial \varSigma; \R^m),
	\quad f \mapsto f|_{\partial \varSigma}
\end{align*}
is well-defined and continuous. Hence, the point of the following definition is solely to ensure that restrictions are immersions.
\begin{definition}\label{dfn:strongImm}
Let $\varSigma$ be a compact, smooth manifold with boundary.
We define the space of \emph{strong Lipschitz immersions} by
\begin{align*}
	\strongImm(\varSigma;\AmbSpace)
	\ceq 
	\set{ f \in \Imm(\varSigma;\AmbSpace) | \;f|_{\partial \varSigma}  \in \Imm(\partial \varSigma;\AmbSpace) }
\end{align*}
and equip it with the graph metric
\begin{align*}
	\dstrongImm(f_1,f_2) \ceq \dImm(f_1,f_2) + \dImm(f_1|_{\partial \varSigma},f_2|_{\partial \varSigma}),
	\quad
	\text{for $f_1$, $f_2 \in \strongImm(\varSigma;\AmbSpace)$.}
\end{align*}
Note that also $(\strongImm(\varSigma;\AmbSpace), \dstrongImm)$ is a complete metric space.
\end{definition}

To be honest, we currently do not know if $\strongImm(\varSigma;\AmbSpace)$ equals $\Imm(\varSigma;\AmbSpace)$ or not. For our purposes, it is sufficient to have the following trace ``theorem'' for the (maybe smaller) space $\strongImm(\varSigma;\AmbSpace)$:
\begin{lemma}\label{lem:strongImmhastrace}
Let $\varSigma$ be a compact, smooth manifold with boundary.
Then the trace mapping
\begin{align*}
	\res \colon (\strongImm(\varSigma;\AmbSpace),\dstrongImm)  \to (\strongImm(\partial\varSigma;\AmbSpace),\dstrongImm) ,
	\qquad
	f \mapsto f|_{\partial \varSigma}
\end{align*}
is Lipschitz continuous with Lipschitz constant $1$.
\end{lemma}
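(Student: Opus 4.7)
The plan is to unpack the definitions and observe that the lemma is essentially immediate from the graph-metric structure of $\dstrongImm$. The one point worth attention is the target metric: the boundary $\partial \varSigma$ is itself a closed manifold, i.e., $\partial(\partial \varSigma) = \emptyset$, so I need to confirm that $\dstrongImm$ reduces to $\dImm$ on $\strongImm(\partial \varSigma;\R^m)$ before the Lipschitz bound can be read off directly.

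First I would note that for any $h_1, h_2 \in \strongImm(\partial \varSigma;\R^m)$, the graph-metric contribution $\dImm(h_1|_{\partial(\partial\varSigma)}, h_2|_{\partial(\partial\varSigma)})$ involves $L^\infty$-norms, essential suprema of $\dPosDef$, and essential suprema of $\dinfty$ over the empty set. Under the standard convention ($\esssup \emptyset = 0$ for these nonnegative quantities), each of these contributions vanishes, so $\dstrongImm(h_1,h_2) = \dImm(h_1,h_2)$ on the codomain.

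With that in hand, the estimate is a one-line consequence of the definition of $\dstrongImm$ on the domain. For $f_1, f_2 \in \strongImm(\varSigma;\R^m)$,
\begin{align*}
\dstrongImm(\res(f_1),\res(f_2))
&= \dImm(f_1|_{\partial\varSigma}, f_2|_{\partial\varSigma}) \\
&\leq \dImm(f_1,f_2) + \dImm(f_1|_{\partial\varSigma}, f_2|_{\partial\varSigma})
= \dstrongImm(f_1,f_2),
\end{align*}
where the inequality is immediate since $\dImm(f_1,f_2) \geq 0$, and the final equality is just the definition of the graph metric.

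I do not expect any genuine obstacle: the entire content of the lemma is bookkeeping, built into the graph-metric convention for $\dstrongImm$. The lemma's role, presumably, is to make it explicit that boundary restriction is $1$-Lipschitz so that later, when sampling and reconstruction operators in Section~\ref{sec:MinimalSurfaces} are shown to be proximate with respect to $\strongShape$, the proximity estimates automatically transfer from $\varSigma$ to $\partial \varSigma$ without additional work.
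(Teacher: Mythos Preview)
Your proposal is correct and matches what the paper intends: the paper regards this lemma as immediate from the definition of the graph metric and omits the proof entirely. Your observation that $\partial(\partial\varSigma)=\emptyset$ forces $\dstrongImm=\dImm$ on the codomain, followed by dropping the nonnegative term $\dImm(f_1,f_2)$, is exactly the intended one-line verification.
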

\invisible{
\begin{proof}
\qed
\end{proof}
}

For $\Curve \in \Imm(\partial \varSigma;\R^m)$, we may also define the space of immersions under boundary conditions:
\begin{align*}
	\Immg(\varSigma;\R^m) 
	\ceq 
	\set{f \in \strongImm(\varSigma;\R^m) | \res(f) =\Curve}.
\end{align*}

\subsection{Diffeomorphism Group}\label{diffeos}

Let $(\varSigma,g)$ be a compact, smooth Riemannian manifold.
We define the group of \emph{Lipschitz diffeomorphisms} by
\begin{align*}
	\Diff(\varSigma) \ceq
	\set{ \varphi \in W^{1,\infty}_g(\varSigma;\varSigma) | \text{$\varphi$ is a bi-Lipschitz homeomorphism}},
\end{align*}
where the group structure is given by
\begin{gather*}
	\mu \colon \Diff(\varSigma) \times \Diff(\varSigma) \to \Diff(\varSigma),
	\qquad (\varphi,\psi) \mapsto \varphi \circ \psi,
	\\
	\iota \colon \Diff(\varSigma) \to \Diff(\varSigma),
	\qquad \varphi \mapsto \varphi^{-1}.
\end{gather*}
Note that the space $\Diff(\varSigma)$ does not depend on the choice of the Riemannian metric~$g$.
The existence of global Lipschitz constants for $\varphi$ and $\varphi^{-1}$ implies $\varphi^\pull g$,  $(\varphi^{-1})^\pull g \in \PosDefSec(\varSigma)$ for every Riemannian metric $g$ on $\varSigma$.
The group $\Diff(\varSigma)$ acts from the right on $\Imm(\varSigma;\R^m)$ and $\strongImm(\varSigma;\R^m)$ via
$
	L_\varphi(f) \mapsto  f \circ \varphi,
$
for all $\varphi \in \Diff(\varSigma)$.
Since $\varphi$ is a bi-Lipschitz homeomorphism, the composition $f \circ \varphi$ is differentiable on the following set of full measure
\begin{align*}
	\varphi^{-1}(\set{z\in\varSigma | \text{$f$ is differentiable at $z$}})
	\cap	 \set{y\in\varSigma | \text{$\varphi$ is differentiable at $y$}}.
\end{align*}
Hence, we have the \emph{chain rule} 
\begin{align}
	\dd (f \circ \varphi)\at_x = \dd f\at_{\varphi(x)} \, T_x \varphi
	\quad
	\text{for almost all $x \in \varSigma$,}\label{eq:chainrule}
\end{align}
where $T_x\varphi \colon T_x \varSigma \to \T_x \varSigma$ denotes the tangent map.
We point out that this implies that $L_\varphi$ is an isometry with respect to $\dImm$ and $\dstrongImm$ for each $\varphi \in \Diff(\varSigma)$---a fact that we utilize to analyze the quotient metrics of $\dImm$ and $\dstrongImm$.
Moreover, note that the restriction $\varphi|_{\partial \varSigma}$ of a Lipschitz diffeomorpism $\varphi \in \Diff(\varSigma)$ is a Lipschitz diffeomorpism of $\partial \varSigma$. This constitutes a continuous mapping $\res \colon \Diff(\varSigma) \to \Diff(\partial \varSigma)$.

\subsection{Quotient Space}\label{quotspace}

\begin{definition}\label{dfn:strongShapeSpace}
\label{dfn:ShapeSpace}
We define the \emph{shape space}
\begin{align*}
	\strongShape(\varSigma;\R^m) &\ceq \strongImm(\varSigma;\R^m) \slash \Diff(\varSigma)
\end{align*}
and denote by 
$\strongShapeQuot \colon \strongImm(\varSigma;\R^m) \to \strongShape(\varSigma;\R^m)
$ the canonical map.
Since $\Diff(\varSigma)$ acts through isometries, the \emph{quotient semi-metric} $\dstrongShape$ can be written as
\begin{align*}
	\dstrongShape (\strongShapeQuot(f_1), \strongShapeQuot(f_2)) 
	&= \inf_{\varphi \in \Diff(\varSigma)} \dstrongImm(f_1, f_2 \circ \varphi)
	& &\text{for $f_1$, $f_2 \in \strongImm(\varSigma;\R^m)$.}
\end{align*}
\end{definition}

\begin{theorem}\label{quotisHsd}
Let $\varSigma$ be a compact smooth manifold with boundary.  \newline
Then $(\strongShape(\varSigma;\R^m),\dstrongShape)$ is a complete metric space.
\end{theorem}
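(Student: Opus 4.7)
My plan is to proceed in two stages: first, verify that $\dstrongShape$ separates distinct orbits (so it is a genuine metric, not merely a semi-metric), and second, deduce completeness from the completeness of $\strongImm(\varSigma;\R^m)$ (\autoref{theo:Immiscomplete} together with the remark in \autoref{dfn:strongImm}).

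For the separation property, suppose $\dstrongShape(\strongShapeQuot(f_1), \strongShapeQuot(f_2)) = 0$, and pick a sequence $(\varphi_n)_{n\in\N}$ in $\Diff(\varSigma)$ with $\dstrongImm(f_1, f_2\circ \varphi_n) \to 0$. The aim is to extract a subsequence converging to some $\varphi \in \Diff(\varSigma)$ with $f_2 \circ \varphi = f_1$. The crucial input is that $\dstrongImm$-convergence controls the pullback metrics $(f_2 \circ \varphi_n)^\pull g_0 = \varphi_n^\pull (f_2^\pull g_0)$ uniformly in $\PosDefSec(\varSigma)$; together with the uniform two-sided bounds on the singular values of $\dd f_2$ arising from $f_2 \in \strongImm(\varSigma;\R^m)$ and the chain rule \eqref{eq:chainrule}, this bounds the singular values of $T\varphi_n$ away from $0$ and $\infty$, essentially uniformly in $n$. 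Hence both $(\varphi_n)$ and $(\varphi_n^{-1})$ are equi-Lipschitz self-maps of the compact space $\varSigma$. Arzel\`a--Ascoli then yields a common subsequence with $\varphi_n \to \varphi$ and $\varphi_n^{-1}\to \psi$ uniformly, where $\varphi$ and $\psi$ are Lipschitz; passing to the limit in $\varphi_n\circ\varphi_n^{-1} = \varphi_n^{-1}\circ \varphi_n = \id_\varSigma$ gives $\psi = \varphi^{-1}$, so $\varphi \in \Diff(\varSigma)$. Uniform continuity of $f_2$ and uniform convergence of $\varphi_n$ yield $f_2 \circ \varphi_n \to f_2 \circ \varphi$ uniformly, which combined with $f_2 \circ \varphi_n \to f_1$ in $L^\infty$ forces $f_2 \circ \varphi = f_1$. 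The boundary condition is ensured by running the identical argument on the restrictions, using \autoref{lem:strongImmhastrace}.

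For completeness I employ the standard subsequence trick. Given a Cauchy sequence $(\strongShapeQuot(f_n))_{n\in\N}$ in $\strongShape(\varSigma;\R^m)$, extract a subsequence with $\dstrongShape(\strongShapeQuot(f_{n_k}), \strongShapeQuot(f_{n_{k+1}})) < 2^{-k-1}$. Using the infimum definition of the quotient semi-metric, pick $\psi_k \in \Diff(\varSigma)$ and set $\tilde f_1 \ceq f_{n_1}$, $\tilde f_{k+1} \ceq f_{n_{k+1}}\circ \psi_k$, so that $\dstrongImm(\tilde f_k, \tilde f_{k+1}) < 2^{-k}$. Then $(\tilde f_k)$ is Cauchy in the complete space $(\strongImm(\varSigma;\R^m), \dstrongImm)$, hence converges to some $f \in \strongImm(\varSigma;\R^m)$. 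Since $\strongShapeQuot$ is $1$-Lipschitz and $\strongShapeQuot(\tilde f_k) = \strongShapeQuot(f_{n_k})$, we obtain $\strongShapeQuot(f_{n_k}) \to \strongShapeQuot(f)$; the Cauchy property then passes the convergence back to the whole sequence.

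The main obstacle is the separation step: turning qualitative $\dstrongImm$-convergence of $f_2 \circ \varphi_n$ into quantitative compactness for $(\varphi_n)$ itself. This rests entirely on the design of $\dImm$, which encodes first-derivative information via the pullback metric in $\dPosDefSec$; as a consequence, $\dstrongImm$-convergence automatically controls the singular values of $\dd(f_2\circ\varphi_n)$ from both above and below. Without such two-sided control, $\varphi_n$ could degenerate (for instance by concentrating a positive-measure piece of $\varSigma$ onto a set of vanishing $g$-measure), and Arzel\`a--Ascoli compactness would fail.
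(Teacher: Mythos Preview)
Your proposal is correct and follows essentially the same route as the paper: separation via Arzel\`a--Ascoli compactness of $(\varphi_n)$ and $(\varphi_n^{-1})$ obtained from two-sided singular value bounds on $T\varphi_n$ (extracted through the chain rule \eqref{eq:chainrule} and the $\dPosDefSec$-boundedness of the pullback metrics), followed by the standard $2^{-n}$-summable subsequence lift for completeness. The only point where the paper is more explicit is the passage from the bound $\nnorm{T\varphi_n}_{L^\infty_g} \leq \varLambda$ to genuine equi-Lipschitz continuity with respect to $d_g$: the paper first selects a metric $g$ with totally geodesic boundary (so that every point has arbitrarily small geodesically convex neighborhoods) and then argues via a finite cover by such sets, whereas you invoke this step as immediate. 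Your separate boundary remark is harmless but unnecessary, since $\varphi \in \Diff(\varSigma)$ already forces $\varphi|_{\partial\varSigma} \in \Diff(\partial\varSigma)$.
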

\begin{proof}
First, we show that $\dstrongShape$ is definite. Let $f$, $h \in \strongImm(\varSigma;\R^m)$ and let $\varphi_\alpha \in \Diff(\varSigma)$ be a sequence with $\dstrongImm(f,h\circ\varphi_\alpha) \to 0$ for $\alpha \to \infty$. We have to find a $\varphi \in \Diff(\varSigma)$ with $f = h\circ\varphi$.	

We start by choosing a smooth Riemannian metric $g$ on $\varSigma$ such that the boundary (if it exists) is totally geodesic. This way, for every point $x\in\varSigma$, every neighborhood $U$ of $x$ contains a geodesically convex neighborhood of $x$. Such a Riemannian metric can be constructed, for example, by choosing a cylinder metric on a smooth collar of $\varSigma$ and extending it smoothly.\footnote{A \emph{smooth collar} of $\varSigma$ is a smooth embedding $\varPhi \colon \intervalco{0,1} \times \partial \varSigma \to \varSigma$ such that $\varPhi(0,x) =x$ holds for all $x \in \partial \varSigma$. Every paracompact smooth manifold with boundary has a smooth collar.}

Observe that $h_\alpha\ceq h\circ\varphi_\alpha$ converges uniformly to $f$. Moreover, being convergent, $h_\alpha^\pull g_0$ is a bounded sequence in $\PosDefSec(\varSigma)$. Hence there is some $\varLambda_0\geq 0$ with
\begin{align*}
	\nnorm{\dd h_\alpha^{\dagger_g}}_{L^\infty_g},\;\norm{\dd h_\alpha}_{L^\infty_g}
	\leq \varLambda_0.
\end{align*}
Here, $\dagger_g$ denotes the Moore-Penrose pseudoinverse with respect to the Riemannian metrics $g$ on $\varSigma$ and $g_0$ on $\R^m$.
The chain rule \eqref{eq:chainrule} yields
\begin{align*}
	\dd h_\alpha \at_x &= \dd h \at_{\varphi_\alpha(x)} \cdot T_x \varphi_\alpha  \quad \text{and}
	\\
	\dd h \at_x &= \dd h_\alpha \at_{\varphi_\alpha^{-1}(x)} \cdot T_x (\varphi_\alpha^{-1}),
\end{align*}
hence one obtains
$T_x \varphi_\alpha = (\dd h \at_{\varphi_\alpha(x)})^{\dagger_g} \cdot \dd h_\alpha \at_x$ for almost all $x \in \varSigma$. Thus, there is a $\varLambda \geq 0$ with
	\begin{align*}
		\nnorm{T \varphi_\alpha}_{L^\infty_g} 
		&\leq \nnorm{(\dd h)^{\dagger_g}}_{L^\infty_g} \cdot \norm{\dd h_\alpha}_{L^\infty_g} \leq \varLambda \quad \text{and}
		\\
		\nnorm{(T\varphi_\alpha)^{-1}}_{L^\infty_g} 
		&\leq \nnorm{(\dd h_\alpha)^{\dagger_g}}_{L^\infty_g} \cdot \nnorm{\dd h}_{L^\infty_g} \leq \varLambda,
	\end{align*}
showing that the families $(\varphi_\alpha)_{\alpha \in \N}$ and $(\varphi_\alpha^{-1})_{\alpha \in \N}$ are equicontinuous. Because $\varSigma$ is a compact metric space, the families $(\varphi_\alpha)_{\alpha \in \N}$ and $(\varphi_\alpha^{-1})_{\alpha \in \N}$ are also pointwise relatively compact.
Thus, the Arzel\`a-Ascoli theorem (see, e.g., \cite[Theorem 47.1]{Munkres2000}) implies the existence of a subsequence (which we also denote by $(\varphi_\alpha)_{\alpha\in\N}$) such that both
$\varphi_\alpha \to \varphi$ and $\varphi_\alpha^{-1} \to \varphi^{-1}$ converge in the compact-open topology on $C(\varSigma;\varSigma)$.

Up to now, we know that $\varphi \colon \varSigma \to \varSigma$ is a homeomorphism and that $f = h \circ \varphi$.
We are left to show that both $\varphi$ are $\varphi^{-1}$ are Lipschitz continuous. \invisible{\autoref{lem:limitisLipschitz}}
To this end, let $V_1,\dotsc, V_N$ with some $N \in \N$ be a covering of $\varSigma$ by open, relatively compact, and geodesically convex sets.
Choose a covering $U_1,\dotsc,U_M$ with some $M \in \N$ of $\varSigma$ by open, relatively compact and geodesically convex sets such that each $\varphi(U_i)$ is contained in some $V_j$.
Then one has for all $x$, $y \in U_i$:
\begin{align*}
	d_g( \varphi(x),\varphi(y) )
	&\leq d_g( \varphi(x),\varphi\alpha(x)) + d_g( \varphi_\alpha(x),\varphi_\alpha(y) ) + d_g( \varphi_\alpha(y),\varphi(y) )
	\\
	&\leq d_g( \varphi(x),\varphi_\alpha(x)) + \norm{T\varphi_\alpha}_{L^\infty_g} \, d_g(x,y) + d_g( \varphi_\alpha(y),\varphi(y) ).
\end{align*}
Applying $\limsup_{\alpha\to \infty}$ yields $\norm{T\varphi}_{L^\infty_g} \leq \Lambda$,
hence $\varphi$ is Lipschitz continuous. The same argument shows that $\varphi^{-1}$ is Lipschitz continuous, too.

Next, we show that $\dstrongShape$ is complete.
Let $(x_\alpha)_{\alpha \in \N}$ be a Cauchy sequence in $\strongShape(\varSigma;\R^m)$. It suffices to show that $(f_\alpha)_{\alpha \in \N}$ has a convergent subsequence. 
By passing over to an appropriate subsequence, we may suppose that $\dstrongShape(x_\alpha,x_\beta) \leq 2^{-(\min(\alpha,\beta)+1)}$ holds for all $\alpha$, $\beta \in \N$. 
Choose $f_1 \in \strongShapeQuot^{-1}(x_1)$ arbitrarily and choose $f_\alpha \in \strongShapeQuot^{-1}(x_\alpha)$ for $\alpha>1$ recursively with $\dstrongImm(f_{\alpha-1},f_\alpha) \leq \dstrongShape(x_{\alpha},x_{\alpha+1}) + 2^{-(\alpha+1)} \leq 2^{-\alpha}$. 
Thus, one has for each pair $\alpha$, $\beta \in \N$ with $\alpha \leq \beta$
\begin{align*}
	\dstrongImm(f_\alpha,f_\beta) \leq \sum_{i=\alpha}^{\beta-1} \dstrongImm(f_{i},f_{i+1}) 
	\leq \sum_{i=\alpha}^{\beta-1} 2^{-i} 
	= 2^{-\alpha} \sum_{i=0}^{\beta-\alpha-1}  2^{-i} \leq 2^{-\alpha+1}.
\end{align*}
Hence, $(f_\alpha)_{\alpha \in \N}$ is a Cauchy sequence and thus converges to a point $f \in \strongImm(\varSigma;\R^m)$. 
Now, $x_\alpha = \strongShapeQuot(f_\alpha)$ converges in to $x \ceq \strongShapeQuot(f)$
since $\strongShapeQuot$ is Lipschitz continuous.
\qed
\end{proof}

\begin{remark}
Notice that for $f_1$, $f_2\in \strongImm(\varSigma;\R^m)$, the distance $\dstrongShape(\strongShapeQuot(f_1),\strongShapeQuot(f_2))$ provides an upper bound for the Fr\'{e}chet distance 
\begin{align*}
	d_{\on{F}}(f_1,f_2) \ceq \inf_{\varphi \in \Diff(\varSigma)} \norm{f_1,f_2\circ \varphi}_{L^\infty}
\end{align*}
as well as for the Haussdorf distance between the sets $f_1(\varSigma)$ and $f_2(\varSigma)$. 

The \emph{Gauss mapping} induced by $f \in \Imm(\varSigma;\R^m)$ is the measurable mapping $\tau(f) \colon  \varSigma \to \Gr_k(\R^m)$ defined almost everywhere by $\tau(f)(x) = \dd_xf(T_x\varSigma)$.
Let $d_{\Gr_k}$ be an $\Ortho(\R^m)$-invariant metric on $\Gr_k(\R^m)$. Then there is a constant $C >0$ such that $d_{\Gr_k}( A(V),B(V)) \leq C \, \dRay(\Ray(A),\Ray(B))$ holds for all $A$, $B \in \Inj(V;\R^m)$.
For $f_1$, $f_2\in \strongImm(\varSigma;\R^m)$, define $h_i \ceq (f_i,\tau(f_i)) \colon \varSigma \to \R^m \times \Gr_k(\R^m)$ for $i \in \{1,2\}$.
Then $\dstrongShape(\strongShapeQuot(f_1),\strongShapeQuot(f_2))$ bounds also the Fr\'{e}chet distance between $h_1$ and $h_2$.

If $f_1$ and $f_2$ are embeddings of class $C^1$, then $M_1 \ceq f_1(\varSigma)$ and $M_2 \ceq f_2(\varSigma)$ are submanifolds of $\R^m$ and $\dstrongShape(\strongShapeQuot(f_1),\strongShapeQuot(f_2))$ yields an upper bound for the Hausdorff distance $d_{\on{Hsd}}(h_1(\varSigma),h_2(\varSigma))$
between their tangent bundles.
\end{remark}

\subsection{Volume Functionals}\label{volumefunctional}

Volume functionals on the space of Lipschitz immersions and on shape space are essential for the treatment of least volume problems. In this section, we establish their local Lipschitz continuity. We work with densities instead of the perhaps better known notion of volume forms in order to be able to treat nonorientable manifolds. 

To establish notation, we start with the linear case. Let $V$ be a $k$-dimensional real vector space.
A \emph{density} on $V$ is a function $\varrho \colon \prod_{i=1}^k V \to \R$ with the properties:
\begin{enumerate}
	\item For all $v_1,\dotsc,v_k \in V$ and all $\lambda_1,\dotsc,\lambda_k \in \R$ the following holds:
	\begin{align*}
		\varrho( \lambda_1 \,v_1 \dotsc, \lambda_n \, v_k) =\abs{\lambda_1\dotsm\lambda_k} \,\varrho (v_1 \dotsc, v_k).
	\end{align*}
	\item For all $v_1,\dotsc,v_k \in V$, $\lambda \in \R$ and $i \neq j$ the following holds:
	\begin{align*}
		\varrho( v_1 \dotsc, v_{i-1}, v_{i}+ \lambda v_j, v_{i+1},\dotsc,v_k)
		= \varrho( v_1 \dotsc, v_{i-1}, v_{i}, v_{i+1},\dotsc,v_k).
	\end{align*}
\end{enumerate} 
For a thorough introduction to densities, we refer to \cite[pp. 375--382]{MR1930091}. Here, we only collect some facts that we need for our purposes. A density $\varrho$ on $V$ is called \emph{positive}, if $\varrho(e_1,\dotsc,e_k) >0$ holds for all bases $e=(e_1,\dotsc,e_k)$ of $V$.
Denote the space of positive densities on $V$ by $\Vol(V)$.
For two positive densities $\varrho$ and $\sigma$ denote the unique positive number $t \in \R_{>0}$ with $\sigma= t \, \varrho$ by $\tfrac{\sigma}{\varrho}$.
We define the distance $\dVol$ by
\begin{align*}
	\dVol( \varrho, \sigma) \ceq \nabs{\log \bigparen{\tfrac{\sigma}{\varrho}}} \quad \text{for all $\varrho$, $\sigma \in \Vol(V)$.}
\end{align*}
Notice that every inner product $g \in \PosDef(V)$ on $V$ induces a unique positive density, the \emph{volume density} $\vol_g$ on $V$, that satisfies
$\vol_g( e_1,\dotsc,e_k) = 1$ for any $g$-orthonormal basis $e=(e_1, \dotsc, e_k)$ of $V$. 
Thus, one has
\begin{align*}
	\frac{\vol_b}{\vol_g} = \frac{ \det (\Gram_e(b) )}{\det (\Gram_e(g))}
	\qand
	\dVol( \vol_b, \vol_g) =\nabs{ \log \nparen{\det (\Gram_e(b) )}- \log \nparen{ \det (\Gram_e(g))}},
\end{align*}
for any two elements $g$, $b \in \PosDef(V)$ and \emph{any} basis $e$ of $V$. (Again, $\Gram_e(g)$ and $\Gram_e(b)$ denote the Gram matrices of $g$ and $b$ with respect to the basis $e$.)

\begin{lemma}\label{lem:volisLipschitz}
The mapping 
\begin{align*}
	\vol \colon \PosDef(V) \to \Vol(V), \quad g \mapsto \vol_g
\end{align*}
is Lipschitz-continuous with Lipschitz constant $\dim(V)^\frac{1}{2}$.
\end{lemma}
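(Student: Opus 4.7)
The plan is to reduce the claim to a simple Cauchy--Schwarz inequality, using the eigenvalue description of the geodesic distance $\dPosDef$ stated in \eqref{cor:geodesicdistanceofPosDef} together with the explicit product formula for $\vol_b/\vol_g$ recorded just above the lemma.

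Concretely, I would fix two positive inner products $b,c \in \PosDef(V)$ and any basis $e = (e_1,\dots,e_k)$ of $V$. Let $\lambda_1,\dots,\lambda_k > 0$ denote the eigenvalues of $\Gram_e(c)$ with respect to $\Gram_e(b)$, i.e.\ the eigenvalues of the similarity class of $\Gram_e(b)^{-1}\Gram_e(c)$. Then by \eqref{cor:geodesicdistanceofPosDef} one has
\begin{align*}
    \dPosDef(b,c) = \Bigparen{\textstyle\sum_{i=1}^{k}\log(\lambda_i)^2}^{1/2}.
\end{align*}
On the other hand, the identity $\tfrac{\vol_c}{\vol_b} = \tfrac{\det\Gram_e(c)}{\det\Gram_e(b)} = \prod_{i=1}^k \lambda_i$ (stated in the excerpt immediately before the lemma) yields
\begin{align*}
    \dVol(\vol_b,\vol_c) = \Bigabs{\log\tfrac{\vol_c}{\vol_b}} = \Bigabs{\textstyle\sum_{i=1}^{k}\log(\lambda_i)}.
\end{align*}

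The conclusion then follows from Cauchy--Schwarz applied to the vector $(\log \lambda_1,\dots,\log\lambda_k) \in \R^k$ against $(1,\dots,1) \in \R^k$:
\begin{align*}
    \Bigabs{\textstyle\sum_{i=1}^{k}\log(\lambda_i)}
    \leq \sqrt{k}\,\Bigparen{\textstyle\sum_{i=1}^{k}\log(\lambda_i)^2}^{1/2}
    = \dim(V)^{1/2}\,\dPosDef(b,c).
\end{align*}
This gives the asserted Lipschitz bound.

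There is no real obstacle here; the only minor care needed is to make sure the eigenvalues $\lambda_i$ are basis-independent (they are, since they are the eigenvalues of an operator $V \to V$ determined by $b$ and $c$ alone), and to note that while $\dPosDef$ and $\dVol$ each involve a choice of basis in their explicit presentation, both sides of the inequality are intrinsic. No other structure is required for the proof.
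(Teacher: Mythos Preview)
Your proof is correct and essentially identical to the paper's: both reduce to the inequality $\bigabs{\sum_i \log \lambda_i} \leq \sqrt{k}\,\bigparen{\sum_i \log(\lambda_i)^2}^{1/2}$, which the paper phrases as H\"older's inequality (after an intermediate triangle-inequality step $\nabs{\sum_i \log \lambda_i} \leq \sum_i \nabs{\log \lambda_i}$) and you phrase directly as Cauchy--Schwarz against $(1,\dots,1)$. The only cosmetic difference is that the paper fixes a $g$-orthonormal basis diagonalizing $\Gram_e(b)$, whereas you invoke the basis-independent eigenvalue formula \eqref{cor:geodesicdistanceofPosDef}; the content is the same.
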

\begin{proof}
Let $g$, $b \in \PosDef(V)$ and choose an orthonormal basis $e=(e_1,\dotsc,e_k)$ of $g$ that diagonalizes $\Gram_e(b)$. Denoting the eigenvalues $\Gram_e(b)$ by $\lambda_1,\dotsc,\lambda_k$, we have
$\vol_b = \lambda_1 \dotsm \lambda_k \, \vol_g$. By H\"older's inequality, this leads to
\begin{align}
	\dVol(\vol_g,\vol_b) = \nabs{\log (\lambda_1 \dotsm \lambda_k)}
	\leq \sum_{i=1}^k \nabs{\log(\lambda_i)}
	\leq \sqrt{k} \, \dPosDef(g,b), \label{eq:voldist}
\end{align}
which shows the statement. 
\qed
\end{proof}

We now generalize this setup to manifolds. Throughout, let $\varSigma$ be a compact, \hbox{$k$-dimensional} smooth manifold with boundary.

\begin{lemma}\label{volumeislipschitz}
The \emph{volume functional}
\begin{align*}
	\cJ \colon (\PosDefSec(\varSigma),\dPosDefSec) \to \R, \qquad g\mapsto \int_\varSigma \vol_{g}
\end{align*}
has its modulus of continuity $\omega_\cJ(g,t)$ bounded by $\cJ(g) \, \ee^{\sqrt{k}\,t}\,\sqrt{k}\,t$, i.e.,
\begin{align*}
	\abs{\cJ(b) - \cJ(g)} \leq \cJ(g) \, \ee^{\sqrt{k}\, \dPosDefSec(g,b)} \, \sqrt{k} \, \dPosDefSec(g,b)
	\quad \text{for all $g$, $b \in \PosDefSec(\varSigma)$.}
\end{align*}
\end{lemma}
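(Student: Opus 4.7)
The plan is to reduce the integral estimate to a pointwise bound, invoking the preceding lemma on $\vol \colon \PosDef(V) \to \Vol(V)$ fiber by fiber and then integrating against $\vol_g$.

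First I would work pointwise. For almost every $x \in \varSigma$, the previous lemma yields
\begin{align*}
	\dVol\bigparen{\vol_{g|_x},\vol_{b|_x}} \leq \sqrt{k}\, \dPosDef(g|_x, b|_x) \leq \sqrt{k}\, \dPosDefSec(g,b) \qec s.
\end{align*}
Set $\phi(x) \ceq \log \bigparen{\tfrac{\vol_{b|_x}}{\vol_{g|_x}}}$, which is a measurable function on $\varSigma$ (the ratio is measurable by measurability of $g$ and $b$ and the continuity of $\vol$ and $\log$). By construction $\vol_{b|_x} = \ee^{\phi(x)}\,\vol_{g|_x}$ and $\nabs{\phi(x)} \leq s$ for $\vol_g$-almost every $x$.

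Next I would estimate $\ee^\phi - 1$ pointwise. For any real $t$ with $\nabs{t} \leq s$ one has
\begin{align*}
	\nabs{\ee^t - 1} = \biggabs{\int_0^t \ee^\tau \,\dd \tau} \leq \nabs{t}\, \ee^{\nabs{t}} \leq s\, \ee^s,
\end{align*}
so $\nabs{\ee^{\phi(x)}-1} \leq s\, \ee^s$ almost everywhere on $\varSigma$.

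Finally I would integrate. Since $\vol_b = \ee^\phi \vol_g$ and $\cJ$ is just integration against these densities, I obtain
\begin{align*}
	\nabs{\cJ(b) - \cJ(g)} = \biggabs{\int_\varSigma (\ee^\phi - 1)\, \vol_g} \leq \int_\varSigma s\,\ee^s\, \vol_g = \cJ(g)\, \ee^s\, s,
\end{align*}
which, upon substituting $s = \sqrt{k}\,\dPosDefSec(g,b)$, is exactly the claimed modulus of continuity. No real obstacle is expected; the only point worth checking is that $\phi$ is measurable and essentially bounded, which follows directly from the definition of $\PosDefSec(\varSigma)$ and the distance $\dPosDefSec$.
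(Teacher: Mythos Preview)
Your proof is correct and follows essentially the same route as the paper's own argument: both apply the preceding fiberwise Lipschitz estimate to bound $\nabs{\log(\vol_b/\vol_g)}$ by $\sqrt{k}\,\dPosDefSec(g,b)$, then use the elementary inequality $\nabs{\ee^t-1}\leq \nabs{t}\,\ee^{\nabs{t}}$ (equivalently $\nabs{t-1}\leq \nabs{\log t}\,\ee^{\nabs{\log t}}$) and integrate against $\vol_g$. The only cosmetic difference is that the paper works with the density ratio $\varLambda=\vol_b/\vol_g$ directly, whereas you pass through its logarithm $\phi$; the estimates are identical.
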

\begin{proof}
We abbreviate $\varLambda \ceq \frac{\vol_b}{\vol_g}$.
From \autoref{lem:volisLipschitz}, we know that
\begin{align*}
	\nabs{ \log (\varLambda(x))} 
	= \dVol( \vol_{g}\at_x,\vol_{b}\at_x)
	\leq \sqrt{k}\, \dPosDef(g \at_x, b\at_x) 
	\leq \sqrt{k} \,\dPosDefSec(g,b).
\end{align*}
Together with the estimate $\abs{t-1} \leq \nabs{\log(t)} \, \ee^{\nabs{\log(t)}}$ for all $t>0$, we obtain
\begin{align*}
	\abs{\cJ(b)-\cJ(g)}
	&= \bigabs{\int_\varSigma (\vol_{b}-\vol_{g})}
	\leq \int_\varSigma \abs{\Lambda-1} \, \vol_{g}
	\leq \cJ(g) \, \ee^{\sqrt{k}\, \dPosDefSec(g,b)} \, \sqrt{k} \, \dPosDefSec(g,b).
\end{align*}
\qed
\end{proof}

\begin{corollary}\label{volumeislipschitz1}
Let $\omega_\cF$ be the modulus of continuity of the \emph{volume functional}
\begin{align*}
	\cF \colon (\Imm(\varSigma;\R^m),\dImm) \to \R, \qquad f \mapsto \int_\varSigma \vol_{f^\pull g_0}.
\end{align*}
Then one has
$\omega_\cF(f,t) \leq \cF(f) \, \ee^{\sqrt{k}\,t}\,\sqrt{k}\,t$.
\end{corollary}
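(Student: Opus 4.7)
The proof is essentially a one-line pullback from \autoref{volumeislipschitz} once one observes the factorization $\cF = \cJ \circ \Phi$, where $\Phi \colon \Imm(\varSigma;\R^m) \to \PosDefSec(\varSigma)$ sends $f \mapsto f^\pull g_0$. The plan is therefore to verify that $\Phi$ is $1$-Lipschitz with respect to $\dImm$ and $\dPosDefSec$, and then invoke the bound on the modulus of continuity of $\cJ$ already established.

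First I would recall that by \autoref{dfn:Imm}, the distance $\dImm$ is defined as a sum of three non-negative terms, one of which is precisely $\dPosDefSec(f_1^\pull g_0, f_2^\pull g_0)$. Consequently,
\begin{equation*}
\dPosDefSec(\Phi(f_1),\Phi(f_2)) \leq \dImm(f_1,f_2)
\quad\text{for all } f_1,f_2 \in \Imm(\varSigma;\R^m),
\end{equation*}
i.e.\ $\Phi$ is $1$-Lipschitz. This justifies pushing any modulus-of-continuity estimate for $\cJ$ through $\Phi$.

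Next, fix $f \in \Imm(\varSigma;\R^m)$ and $t \geq 0$, and let $f_1 \in \Imm(\varSigma;\R^m)$ with $\dImm(f,f_1) \leq t$. Writing $g \ceq f^\pull g_0$ and $b \ceq f_1^\pull g_0$, both elements of $\PosDefSec(\varSigma)$, the preceding inequality gives $\dPosDefSec(g,b) \leq t$. Applying \autoref{volumeislipschitz} yields
\begin{equation*}
\abs{\cF(f_1) - \cF(f)} = \abs{\cJ(b) - \cJ(g)} \leq \cJ(g) \, \ee^{\sqrt{k}\, \dPosDefSec(g,b)} \, \sqrt{k}\, \dPosDefSec(g,b).
\end{equation*}
Because the right-hand side is monotone in $\dPosDefSec(g,b)$ (both factors $\ee^{\sqrt{k}\,s}$ and $s$ are non-decreasing in $s \geq 0$), substituting $\dPosDefSec(g,b) \leq t$ together with $\cJ(g) = \cF(f)$ gives
\begin{equation*}
\abs{\cF(f_1) - \cF(f)} \leq \cF(f)\, \ee^{\sqrt{k}\, t}\, \sqrt{k}\, t.
\end{equation*}

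Taking the supremum over all $f_1$ with $\dImm(f,f_1)\leq t$ delivers the claimed bound on $\omega_\cF(f,t)$. There is no real obstacle here; the only thing to be slightly careful about is the monotonicity step, which is why the estimate for $\cJ$ in \autoref{volumeislipschitz} is stated so that the prefactor $\cJ(g)$ depends only on the basepoint $g$ (and not on $b$), so it survives passing to a supremum over balls in $\Imm(\varSigma;\R^m)$ unchanged.
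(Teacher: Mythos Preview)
Your proof is correct and is precisely the argument the paper has in mind: the paper's own proof consists of nothing more than a reference to \autoref{volumeislipschitz}, and you have simply spelled out the obvious factorization $\cF = \cJ \circ \Phi$ together with the $1$-Lipschitz property of $\Phi$ that makes this reference work.
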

\invisible{
\begin{proof}
\autoref{volumeislipschitz}
\qed
\end{proof}
}

\begin{corollary}\label{volumeislipschitz2}
Let $\strongShapeQuot_\#\cF \colon (\strongShape(\varSigma;\R^m),\dstrongShape) \to \R$ be the variational pushforward of $\cF$ along $\strongShapeQuot$. Its modulus of continuity $\omega_{(\strongShapeQuot_\#\cF)}$ satisfies
\begin{align*}
	\omega_{\strongShapeQuot_\#\cF}(x,t) \leq (\strongShapeQuot_\#\cF)(x) \, \ee^{\sqrt{k}\,t}\,\sqrt{k}\,t.
\end{align*}
\end{corollary}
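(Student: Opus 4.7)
The plan is to reduce the claim to \autoref{volumeislipschitz1} by exploiting invariance of $\cF$ under reparametrizations. First I would verify that the volume functional $\cF$ is invariant under the right action of $\Diff(\varSigma)$: for any $\varphi \in \Diff(\varSigma)$ the chain rule \eqref{eq:chainrule} yields $(f\circ\varphi)^\pull g_0 = \varphi^\pull (f^\pull g_0)$ almost everywhere, and the transformation law for densities combined with the change-of-variables formula for bi-Lipschitz homeomorphisms (Rademacher plus the area formula) gives $\int_\varSigma \vol_{(f\circ\varphi)^\pull g_0} = \int_\varSigma \varphi^\pull \vol_{f^\pull g_0} = \int_\varSigma \vol_{f^\pull g_0}$. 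Consequently $\cF$ is constant on $\Diff(\varSigma)$-orbits, so $\cF$ descends to a well-defined function on $\strongShape(\varSigma;\R^m)$; in particular the infimum in the pushforward is attained at every representative and $(\strongShapeQuot_\#\cF)(\strongShapeQuot(f)) = \cF(f)$ for all $f \in \strongImm(\varSigma;\R^m)$.

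Next, given $x$, $y \in \strongShape(\varSigma;\R^m)$ with $\dstrongShape(x, y) \leq t$, I would use the explicit formula from \autoref{dfn:strongShapeSpace}, namely $\dstrongShape(\strongShapeQuot(f_1), \strongShapeQuot(f_2)) = \inf_{\varphi \in \Diff(\varSigma)} \dstrongImm(f_1, f_2 \circ \varphi)$, to select, for each $\epsilon > 0$, representatives $f_1 \in \strongShapeQuot^{-1}(x)$, $f_2 \in \strongShapeQuot^{-1}(y)$ and $\varphi \in \Diff(\varSigma)$ with $\dstrongImm(f_1, f_2 \circ \varphi) < t + \epsilon$. Since $\dImm \leq \dstrongImm$ by \autoref{dfn:strongImm}, the same bound holds for $\dImm$, so \autoref{volumeislipschitz1} applies and gives $|\cF(f_1) - \cF(f_2 \circ \varphi)| \leq \cF(f_1)\,\ee^{\sqrt{k}(t+\epsilon)}\,\sqrt{k}\,(t+\epsilon)$. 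By the $\Diff(\varSigma)$-invariance of $\cF$, the left-hand side equals $|(\strongShapeQuot_\#\cF)(x) - (\strongShapeQuot_\#\cF)(y)|$ and $\cF(f_1) = (\strongShapeQuot_\#\cF)(x)$; passing to the limit $\epsilon \searrow 0$ produces the asserted bound on $\omega_{\strongShapeQuot_\#\cF}(x, t)$.

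The only genuine subtlety is the justification of the $\Diff(\varSigma)$-invariance at the level of merely Lipschitz parametrizations, where the tangent maps and pullback metrics exist only almost everywhere. All other steps — attainment of the infimum in the pushforward, the $\inf$-manipulation with the quotient metric, and the limit $\epsilon \searrow 0$ — are formal manipulations.
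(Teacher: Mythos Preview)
Your proof is correct and follows essentially the same approach as the paper: both use $\Diff(\varSigma)$-invariance of $\cF$ to identify $\strongShapeQuot_\#\cF$ with the descent of $\cF$, then bound $|\cF(h\circ\varphi)-\cF(f)|$ via \autoref{volumeislipschitz1} and take the infimum over $\varphi$ using the quotient-metric formula. The only cosmetic difference is that the paper writes the infimum directly and invokes monotonicity of $t\mapsto \ee^{\sqrt{k}t}\sqrt{k}t$, whereas you use an $\epsilon$-approximation and pass to the limit; these are equivalent.
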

\begin{proof}
\invisible{\autoref{volumeislipschitz1}}
For $x$, $y \in \strongShape(\varSigma;\R^m)$ and $f \in \strongShapeQuot^{-1}(x)$, $h \in \strongShapeQuot^{-1}(y)$ observe
\begin{align*}
	\abs{(\strongShapeQuot_\#\cF)(y) - (\strongShapeQuot_\#\cF)(x)}
	&= \inf_{\varphi \in \Diff(\varSigma)} \, \abs{\cF(h \circ \varphi) - \cF(f)}
	\\
	&\leq \inf_{\varphi \in \Diff(\varSigma)}  \, \cF(f) \, \ee^{\sqrt{k}\, \dstrongImm(f,h \circ \varphi)} \, \sqrt{k} \, \dstrongImm(f,h \circ \varphi)
	\\
	&= (\strongShapeQuot_\#\cF)(x) \, \ee^{\sqrt{k}\, \dstrongShape(x,y)} \, \sqrt{k} \, \dstrongShape(x,y),
\end{align*}
where the last equality holds because of the invariance of the volume functional and the monotonicity of the function $t \mapsto \ee^{\sqrt{k}\, t} \, \sqrt{k} \, t$.
\qed
\end{proof}

\clearpage

\section{Minimal Surfaces}\label{sec:MinimalSurfaces}

As an application of the theory developed in \autoref{sec:paramopt}, in particular of \autoref{topstabandconv}, we discuss a variant of the \emph{Douglas-Courant problem} or \emph{least area/volume problem}: Among the immersed $k$-dimensional surfaces in $\R^m$ with prescribed topology and Dirichlet boundary conditions find those of minimal $k$-volume. For $k=2$, minimizers of class $C^2$ are examples of minimal surfaces.

We discretize this problem by searching for volume-minimizers among immersed $k$-dimen\-sion\-al simplicial meshes of fixed combinatorics bounded by a given, closed $(k-1)$-dimensional simplicial mesh. To some extent, this approach can be understood as a nonconforming Ritz-Galerkin method with first order Lagrange elements (piecewise linear finite elements).

The point we would like to make is this: Given a sufficiently well-posed Douglas-Courant problem, i.e., the boundary conditions are such that volume minimizers within the prescribed topological class exist \emph{and} have a certain uniform regularity, the set of solutions can be approximated by solutions of a discrete Douglas-Courant problem.

We start our exposition by giving a precise definition of minimal surfaces
and by stating both the Douglas-Courant problem and the least area/volume problem. 
Afterwards, we discretize the least area problem and identify the relevant entities occurring in \autoref{topstabandconv}, namely smooth and discrete configuration spaces, functionals and test mappings, as well as sampling and reconstruction operators. Our convergence result then follows from an analysis of consistency and proximity errors.

\subsection{Problem Formulation}\label{sec:MinimalSurfacesTheory}

\begin{definition}\label{dfn:minimalsurface}
Let $\varSigma$ be a $2$-dimensional manifold with boundary and let $(M_0,g_0)$ be a smooth Riemannian manifold of dimension $m \geq 3$.
A mapping $f \in C^0(\varSigma;M_0) \cap C^2(\varSigma^\circ;M_0)$ is called a \emph{minimal surface} if there is a Riemannian metric $g$ of class $C^1$ in the interior $\varSigma^\circ \ceq \varSigma \setminus \partial \varSigma$ and a function $\varrho \in C^1(\varSigma^\circ;\intervalco{0,\infty})$ with
\begin{align}
	\Delta^{g,g_0} f \ceq \tr_g \Hess^{g,g_0}(f)	 =0 
	\quad \text{and} \quad
	f^\pull g_0 = \varrho \, g \quad \text{in $\varSigma^\circ$}.\label{eq:minimalsurfacecond}
\end{align}
\end{definition}

The \emph{Douglas-Courant problem}, also called the \emph{Plateau-Douglas problem}, can be formulated as follows (see \cite{MR0000102} or \cite{MR0036317}):
\begin{problem}[Douglas-Courant]\label{prob:DouglasCourant}
Let $\varSigma$ be a $2$-dimensional smooth manifold with  boundary and let $\Curve \in C^0( \partial \varSigma ; M_0)$ be an embedding.  Find all minimal surfaces $f$ with $f|_{\partial \varSigma} = \Curve \circ \varphi$ for some homeomorphism $\varphi \colon {\partial \varSigma }\to \partial \varSigma$.
\end{problem}
In the case that $\varSigma=D$ is the closed unit disk and $M_0 =\R^3$, this is traditionally referred to as the \emph{Plateau problem}.

The notion of minimal surfaces has its origin in the least area problem, the 2-dimensional instance of the least volume problem. We give a formulation of this problem in terms of Lipschitz immersions:

\begin{problem}[Least volume problem]\label{Plateauprob}
Let $\varSigma$ be a compact, $k$-dimensional smooth manifold with boundary.
Let $(M_0,g_0)$ be a smooth, $m$-dimensional Riemannian manifold with $m>k$ and let $\Curve \in \Imm(\partial \varSigma; M_0)$ be a Lipschitz immersion.
Given $\varSigma$ and $\Curve$, minimize the volume functional
\begin{align*}
	\cF(f) =
	\int_\varSigma \vol_{f^\pull g_0}
\end{align*}
on the space $\cC \ceq \Immg(\varSigma;M_0)$ of Lipschitz immersions that restrict to $\Curve$  on the boundary (see \autoref{sec:ShapeSpace}).
\end{problem}

\begin{remark}
Note that by using Lipschitz immersions as configuration space, we exclude ``hairy'' mappings, but we also exclude continuously differentiable mappings with isolated branch points. 
\end{remark}

We summarize the close relation between the Douglas-Courant problem and the least area problem for Lipschitz immersions in the following statement:
\begin{lemma}\label{lem:conntominimalsurfaces}
Let $\varSigma$ be a compact, $2$-dimensional smooth manifold with boundary and let $(M_0,g_0)$ be a smooth Riemannian manifold without boundary.
Suppose that $\Curve \in \Imm(\partial \varSigma;M_0)$ is a topological embedding and let $f \in \cC \cap C^2(\varSigma^\circ;M_0)$ be a Lipschitz immersion that is of class $C^2$ in the interior of $\varSigma$. 

Then $f$ is a minimal surface if and only if it is a critical point of $\cF|_\cC$.
\end{lemma}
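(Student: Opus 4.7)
The plan is to reduce both properties to the single condition $\Delta^{h,g_0}(f) = 0$ on $\varSigma^\circ$, where $h := f^\pull g_0$ denotes the induced metric. Since $f$ is an immersion of class $C^2$ in $\varSigma^\circ$, the metric $h$ is of class $C^1$ there, and the tension field $\Delta^{h,g_0}(f) = \tr_h \Hess^{h,g_0}(f)$ is a well-defined continuous section of $f^\ast TM_0$ on $\varSigma^\circ$.

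First I would derive a first variation formula for $\cF$ on $\cC$. For any smooth variation field $V \in C^\infty_c(\varSigma^\circ; f^\ast TM_0)$ compactly supported away from $\partial\varSigma$, the perturbation $f_t := \exp^{g_0}_f(tV)$ lies in $\cC$ for all sufficiently small $t$: openness of $\Imm(\varSigma;M_0)$ in $W^{1,\infty}$ (the analogue of \autoref{cor:W1inftyembLip}) and the fact that $V$ vanishes near the boundary together ensure that $f_t$ is a Lipschitz immersion with unchanged boundary trace $\Curve$. A standard harmonic-map computation then yields
\[
\frac{d}{dt}\bigg|_{t=0} \cF(f_t) \;=\; -\int_\varSigma \biginnerprod{\Delta^{h,g_0}(f),\,V}_{g_0} \,\vol_h,
\]
and the fundamental lemma of the calculus of variations, applied in local trivializations of $f^\ast TM_0$, implies that $f$ is critical for $\cF|_\cC$ if and only if $\Delta^{h,g_0}(f) \equiv 0$ on $\varSigma^\circ$.

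The remaining ingredient is the conformal invariance of the tension field in dimension $k=2$: whenever $g$ is a Riemannian metric with $h = \varrho\cdot g$ and $\varrho > 0$, a local-coordinate computation (the weighting $\sqrt{\det g}\,g^{ij}$ entering the tension field is conformally invariant in two dimensions) gives
\[
\Delta^{g,g_0}(f) \;=\; \varrho\cdot \Delta^{h,g_0}(f).
\]
In particular $\Delta^{g,g_0}(f) = 0$ iff $\Delta^{h,g_0}(f) = 0$. The two implications of the lemma follow at once. If $f$ is a minimal surface with data $(g,\varrho)$ as in \autoref{dfn:minimalsurface}, then $\varrho > 0$ (since $f$ is an immersion in $\varSigma^\circ$), hence $\Delta^{h,g_0}(f) = 0$ and the first variation formula gives criticality. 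Conversely, if $f$ is critical, then $\Delta^{h,g_0}(f) = 0$; choosing $g := h$ (of class $C^1$ on $\varSigma^\circ$) and $\varrho := 1$ produces valid data for \autoref{dfn:minimalsurface}.

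The main obstacle I anticipate is justifying the first variation formula at a map that is only globally Lipschitz. This is overcome by working exclusively with variations $V$ that are smooth and compactly supported in the $C^2$-interior $\varSigma^\circ$: all differentiations under the integral, the integration by parts against $V$ that turns $h^{ij}\partial_iV\cdot\partial_jf$ into $-V\cdot\Delta^{h,g_0}(f)$, and the conformal-rescaling identity operate only on classically smooth objects. The global $W^{1,\infty}$-behavior of $f$ and the prescribed boundary trace $\Curve$ play no role in this test-function computation, so no delicate trace or regularity arguments are needed.
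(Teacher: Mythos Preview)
Your argument is correct and is the standard proof of this classical fact. The paper does not actually supply a proof here: the source wraps an empty proof environment in the \texttt{\textbackslash invisible\{\dots\}} macro, which expands to nothing, so the lemma is simply quoted without argument. Your reduction of both conditions to the vanishing of the tension field $\Delta^{h,g_0}(f)$ with respect to the induced metric $h=f^\pull g_0$, via the first variation formula for area on one side and the two-dimensional conformal invariance $\Delta^{g,g_0}(f)=\varrho\,\Delta^{h,g_0}(f)$ on the other, is exactly the classical route. The observation that $\varrho>0$ follows automatically from $f$ being an immersion on $\varSigma^\circ$ is what makes the passage from the data $(g,\varrho)$ of \autoref{dfn:minimalsurface} to the induced metric legitimate, and the converse choice $g=h$, $\varrho\equiv 1$ is the obvious one. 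Your final paragraph correctly isolates the only delicate point (global Lipschitz regularity versus interior $C^2$) and disposes of it by testing only against variations compactly supported in $\varSigma^\circ$.
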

\invisible{\begin{proof}\qed\end{proof}}

\subsection{Smooth Setting}

In the following we use the abbreviations $\widetilde \cC \ceq \strongImm(\varSigma;\R^m)$, $\cC \ceq \Immg(\varSigma;\R^m)$, and $\cG \ceq \Diff(\varSigma)$.
Define $\cX \ceq \strongShape(\varSigma;\R^m) = \widetilde \cC\slash\cG$ and denote by $\varPsi \ceq \strongShapeQuot \circ \iota_\Curve \colon \cC \to \cX$ the composition of the inclusion $\iota_\Curve \colon \cC \hookrightarrow \widetilde \cC$ and the quotient map $\strongShapeQuot \colon \widetilde \cC \to \cX$. By \autoref{quotisHsd}, $\cX$ equipped with the quotient metric $d_\cX \ceq \dstrongShape$ induced by $\dstrongImm$ is a metric space. 

Fix an (arbitrary) Riemannian metric $g$ on $\varSigma$. In the upcoming convergence analysis, we assume that $\Curve$ is an embedding of class $\Imm(\partial \varSigma ; \R^m) \cap W^{2,\infty}_g(\partial \varSigma; \R^m)$. In particular, $\Curve$ is a bi-Lipschitzian homeomorphism onto its image.
As \emph{a~priori information}, we assume that there is a $s\geq 0$ with $\varPsi(\cA^s\cap \MinF{}{}) \supset \varPsi(\MinF{}{})$,
where
\begin{align*}
	\cA^s \ceq \set{f \in \cC\cap W^{2,\infty}_g(\varSigma;\R^m) | \text{$\dPosDefSec(g,f^\pull g_0)\leq s$, $\norm{f}_{W^{2,\infty}_g} \leq s$}}
	\quad \text{for all $s\geq 0$}.
\end{align*}
That means, every minimizer $\varPsi(f) \in \varPsi(\MinF{}{})=\argmin(\varPsi_\# \cF)$ allows for a ``nice'' parameterization $f \colon \varSigma \to \R^m$ with injective differentials, controlled distortion, and controlled $W^{2,\infty}_g$-norm.\footnote{Most of the results of this section remain true if one uses---instead of $W^{2,\infty}_g(\varSigma;\R^m)$---any other Banach space $\cB \supset C^\infty(\varSigma;\R^m)$  that embeds compactly into $W^{1,\infty}_g(\varSigma;\R^m)$. Other natural choices are $C^{\ell,\alpha}_g(\varSigma;\R^m)$ for $\ell \geq 1$, $\alpha \in \intervaloo{0,1}$ or $W^{\ell,p}_g(\varSigma;\R^m)$ for $\ell \geq 2$, $p > k$. The same applies to the Banach space that describes the regularity of the boundary condition $\Curve$. Note however, that proximity and consistency error rates may be impaired.}
The assumption $\varPsi(\cA^s\cap \MinF{}{}) \supset \varPsi(\MinF{}{})$ is satisfied in certain cases: We refer to the detailed regularity theory  in Section 2.3 of \cite{MR2760441} and in \cite{MR554379}.  
We point out that we do \emph{not} state, that our a~priori assumptions are always satisfied---not even in the case $k=2$, $m=3$---but at least for a variety of pairs $(\varSigma,\Curve)$. 

\subsection{Discrete Setting}\label{sec:MinimalSurfacesDiscrete}

In order to introduce discrete minimal surfaces we require the notion of smooth triangulations. Let $\Delta_k$ be the
$k$-dimensional \emph{standard simplex} of $\R^{k+1}$ with vertices 
$e_0,\dotsc,e_k$, i.e., the standard basis of $\R^{k+1}$. 
Let $\varSigma$ be a smooth, $k$-dimensional manifold with boundary. For a smooth embedding $\sigma \colon \Delta_k \to \varSigma$, define the \emph{vertex set}
\begin{align*}
	V(\sigma) \ceq \set{\sigma(e_0), \dotsc, \sigma(e_k)}. 
\end{align*}
A \emph{smooth triangulation} of $\varSigma$ is a family
\begin{align*}
	\cT \subset \set{ \sigma \colon \Delta_k \to \varSigma | \text{$\sigma$ is a smooth embedding}}
\end{align*}
with the following properties:
\begin{enumerate}
	\item $\varSigma = \bigcup_{\sigma \in \cT} \sigma(\Delta_k)$.
	\item For each pair $\sigma$, $\tau \in \cT$ with $\sigma(\Delta_k) \cap \tau(\Delta_k) \neq \emptyset$, both $\sigma^{-1}(\sigma(\Delta_k) \cap \tau(\Delta_k))$ and $\tau^{-1}(\sigma(\Delta_k) \cap \tau(\Delta_k))$ are $d$-faces of $\Delta_k$ for some $0\leq d \leq k-1$ and the mapping
	\begin{align*}
		\tau^{-1} \circ \sigma \colon \sigma^{-1}(\sigma(\Delta_k) \cap \tau(\Delta_k)) \to 
		\tau^{-1}(\sigma(\Delta_k) \cap \tau(\Delta_k))
	\end{align*}
	is affine.
	\item For each $\sigma \in \cT$ with 
	$\sigma(\Delta_k) \cap \partial \varSigma \neq \emptyset$, the set $\sigma^{-1}(\sigma(\Delta_k) \cap \varSigma)$ is a $d$-face of $\Delta_k$ for some $0\leq d \leq k-1$.
\end{enumerate}
We distinguish between \emph{boundary vertices} and \emph{interior vertices}:
\begin{align*}
	V(\cT) \ceq \bigcup_{\sigma \in \cT} V(\sigma),
	\quad
	V_{\on{b}}(\cT) \ceq V(\cT) \cap \partial \varSigma
	\qand
	V_{\on{i}}(\cT) \ceq V(\cT)\setminus V_{\on{b}}(\cT).
\end{align*}
A smooth triangulation is called \emph{finite} if its cardinality is finite. Note that every smooth manifold with boundary admits a smooth triangulation (see \cite{MR0002545}). For compact manifolds with boundary, there is always a finite smooth triangulation. A smooth triangulation $\cT$ of a $k$-dimensional smooth manifold $\varSigma$ with boundary induces a smooth triangulation $\cT|_{\partial \varSigma}$ of the boundary $\partial \varSigma$ in the following way:
\begin{align*}
	\cT|_{\partial\varSigma} \ceq \{ \sigma \at_A \mid \text{$\sigma \in \cT$ and $A$ is a face of $\Delta_k$ such that $\sigma(A) \subset \partial \varSigma$}\}
\end{align*}
Moreover, we define the \emph{Lagrange basis functions} as the unique collection $\lambda_p$, $p \in V(\cT)$ of continuous functions satisfying the following conditions:
\begin{enumerate}
	\item $\lambda_p(p)=1$.
	\item $\lambda_p(q)=0$ for all $q\in V(\cT)\setminus\{p\}$.
	\item $\lambda_p \circ \sigma \colon \Delta_k \to \R$ is the restriction of an affine function for each $\sigma \in \cT$.
\end{enumerate}
We formulate the discrete area minimization problem in the following way:

\begin{problem}[Discrete least area problem]\label{prob:discreteproblem}
Minimize the \emph{discrete volume functional}
\begin{align*}
	\cF_\cT(f) 
	\ceq \sum_{\sigma \in \cT} \int_{\conv( f(V(\sigma)))} \dd \cH^{k},
\end{align*}
on the \emph{discrete configuration space}
\begin{align*}
	\cC_\cT \ceq 
	\Set{ f \colon V(\cT) \to \R^m | 
	\begin{array}{c}
	f|_{V_{\on{b}}(\cT)} = \Curve|_{V_{\on{b}}(\cT)},\\
	\text{$\forall \sigma \in \cT \colon f(V(\sigma))$ in general position}
	\end{array}
	} \subset \R^{m\, \on{card}(V(\cT))}.
\end{align*}
Here, $\conv$ denotes the convex hull operator and $\cH^{k}$ is the $k$-dimensional Hausdorff measure.
\end{problem}

Note that the manifold $\varSigma$ and its triangulation $\cT$ do not occur explicitly in the formulation of the problem, only the combinatorics of $\cT$. 
 
With the Lagrange basis functions $\lambda_p$, $p \in V(\cT)$, we define an interpolation operator
\begin{align*}
	\treverwo_\cT \colon \R^{m\, \on{card}(V(\cT))} \to W^{1,\infty}(\varSigma;\R^m), 
	\qquad 
	\treverwo_\cT(f)(x) \ceq \sum_{p \in V(\cT)} \lambda_{p}(x) \, f(p).
\end{align*}
Note that for every $f \in \cC_\cT$, the image of $\treverwo_\cT(f)$ in $\R^m$ is a union of non-degenerate $k$-dimensional Euclidean simplices (hence a triangle mesh if $k=2$),
which implies $\treverwo_\cT(f) \in \strongImm(\varSigma;\R^m)$.
By construction, we have $\cF(\treverwo_\cT(f)) = \cF_\cT(f)$.
We define the piecewise smooth mapping
\begin{align}
	\Curve_\cT \colon \partial \varSigma \to \R^m,
	\qquad 
	\Curve_\cT(x) \ceq \sum_{p \in V_{\on{b}}(\cT)} \lambda_p (x) \, \Curve(p).\label{eq:GammaT}
\end{align}
Observe that for each $f \in \cC_\cT$, the interpolation $\treverwo_\cT(f)$ restricted to $\partial \varSigma$ is identical to $\Curve_\cT$. Moreover, the image of $\Curve_\cT$ is a union of embedded $(k-1)$-dimensional simplices, if $\cT$ is sufficiently fine.\footnote{This tells us that $\cC_\cT=\emptyset$ may occur if the triangulation $\cT$ is too coarse at the boundary $\partial\varSigma$.}
In general, $\Curve_\cT$ and $\Curve$ do not coincide which is why we have to modify $\treverwo_\cT$ later in order to obtain a reconstruction operator $\reverwo_\cT \colon \cC_\cT \to \cC$ (see \autoref{sec:reconstructionoperator}). However, $\treverwo_\cT(f)$ represents the triangle mesh that is used for actual numerics and we aim at comparing $\MinF{}{}$ to $\treverwo_\cT(\MinF{\cT}{})$. Thus, we define the discrete test mapping $\varPsi_\cT$ by
\begin{align*}
	\varPsi_\cT \colon \cC_\cT \to \cX, 
	\qquad 
	f \mapsto (\strongShapeQuot\circ \treverwo_\cT)(f).
\end{align*}
We suppose that the discrete minimizers $\cM_\cT$ fulfill $\cM_\cT \subset \cA_\cT^r$ with some $r>0$ and with the set of \emph{discrete a~priori information} defined by
\begin{align*}
	\cA^r_\cT 
	\ceq \set{f \in \cC_\cT | 
	\text{$\dPosDefSec(g, \treverwo_\cT(f)^\pull g_0) \leq r$
	}}.
\end{align*}
This assumption reflects the desire that all simplices of discrete minimizers should be uniformly nondegenerate in the sense that the aspect ratios of the embedded simplices 
\begin{align*}
	\set{(f\circ \sigma)(\Delta_k) | f \in \MinF{\cT}{},\; \sigma \in \cT}
\end{align*}
are uniformly bounded (see also \autoref{rem:MinSurfDiscussion1} below).

\subsection{Sampling Operator}\label{sec:SamplingOperator}

We introduce the operator
\begin{align*}
	\interwo_\cT \colon \cA^s  \subset \cC \to \R^{m\, \on{card}(V(\cT))},
	\qquad
	f\mapsto f|_{V(\cT)}.
\end{align*}
It will turn out to be a sampling operator if $s>0$ and $\cT$ are appropriately chosen (see \autoref{goodapriories}). We also define the \emph{relative approximation errors} $\varrho^{0}(\cT)$, $\varrho^{1}(\cT)$ of the smooth triangulation $\cT$ by
\begin{align*}
 	\varrho^{0}(\cT) 
 	\ceq 
 	\sup_{\substack{f \in W^{2,\infty}_g(\varSigma;\R^m)\\ \dd f \neq 0}} 
 	\frac{\nnorm{f- f_\cT }_{L^{\infty}_g}}{\nnorm{\dd f}_{W^{1,\infty}_g}}
 	\quad
 	\text{and}
 	\quad
 	\varrho^{1}(\cT) 
 	\ceq 
 	\sup_{\substack{f \in W^{2,\infty}_g(\varSigma;\R^m)\\ \dd f \neq 0}} 
 	\frac{\nnorm{\dd f- \dd f_\cT}_{L^{\infty}_g}}{\nnorm{\dd f}_{W^{1,\infty}_g}},
\end{align*} 
where $f_\cT \ceq (\treverwo_\cT \circ \interwo_\cT)(f)$. Moreover, we introduce the following abbreviation:
\begin{align*}
	\varrho(\cT) \ceq \max \{\,
	\varrho^{(0)}(\cT),\;
	\varrho^{(0)}(\cT|_{\partial \varSigma}),\;
	\varrho^{(1)}(\cT),\;
	\varrho^{(1)}(\cT|_{\partial \varSigma})
	\,\}.
\end{align*}

\begin{lemma}\label{existenceoffinesmoothtriangulation}
Let $(\varSigma,g)$ be a compact, smooth Riemannian manifold with boundary. Then there are finite smooth triangulations with arbitrary small relative approximation errors, i.e., for every $\varepsilon>0$ there is a finite smooth triangulation $\cT$ of $\varSigma$ with $\varrho(\cT) \leq \varepsilon$. 
\end{lemma}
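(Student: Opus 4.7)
The plan is to build a sequence $(\cT_n)_{n\in\N}$ of progressively finer finite smooth triangulations that retain uniformly bounded shape, and to reduce the bounds on $\varrho^{(0)}$ and $\varrho^{(1)}$ to classical Bramble--Hilbert estimates on the reference simplex $\Delta_k$.

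First, by the cited result \cite{MR0002545}, the compact manifold with boundary $(\varSigma,g)$ admits some finite smooth triangulation $\cT_0$. For each $n \in \N$, I would apply a uniform affine subdivision of $\Delta_k$ --- for instance the Freudenthal--Kuhn scheme at level $n$, which produces $2^{nk}$ sub-simplices of Euclidean diameter of order $2^{-n}$ with uniformly bounded aspect ratio --- and then compose each of these affine sub-simplices with the embeddings $\sigma \in \cT_0$. Because adjacent $\sigma,\tau \in \cT_0$ agree on shared faces up to an affine diffeomorphism of $\Delta_k$, the subdivision is consistent across faces, and the result is a finite smooth triangulation $\cT_n$ of $\varSigma$ whose boundary restriction $\cT_n|_{\partial \varSigma}$ is likewise a finite smooth triangulation with uniform shape regularity.

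Next, I would localise the interpolation error. Fix $\sigma \in \cT_0$ and, for $f \in W^{2,\infty}_g(\varSigma;\R^m)$, set $\widetilde f \ceq f \circ \sigma \in W^{2,\infty}(\Delta_k;\R^m)$. Since $\sigma$ is a smooth embedding of the compact set $\Delta_k$, there is a constant $C_\sigma \geq 1$ such that
\begin{align*}
 C_\sigma^{-1}\, \nnorm{f|_{\sigma(\Delta_k)}}_{W^{2,\infty}_g} \leq \nnorm{\widetilde f}_{W^{2,\infty}(\Delta_k)} \leq C_\sigma\, \nnorm{f|_{\sigma(\Delta_k)}}_{W^{2,\infty}_g},
\end{align*}
and analogous equivalences hold at the $L^\infty$ and $W^{1,\infty}$ levels. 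By construction of the Lagrange basis, the map $\treverwo_{\cT_n}(\interwo_{\cT_n} f) \circ \sigma$ coincides on each sub-simplex of the level-$n$ Freudenthal subdivision of $\Delta_k$ with the piecewise affine interpolant $I_n \widetilde f$ at the corresponding vertices. Classical Bramble--Hilbert estimates on shape-regular affine simplices yield a constant $C>0$, independent of $n$, with
\begin{align*}
 \nnorm{\widetilde f - I_n \widetilde f}_{L^\infty(\Delta_k)} \leq C\, 2^{-2n}\, \nnorm{\widetilde f}_{W^{2,\infty}(\Delta_k)}, \qquad \nnorm{\dd \widetilde f - \dd (I_n \widetilde f)}_{L^\infty(\Delta_k)} \leq C\, 2^{-n}\, \nnorm{\widetilde f}_{W^{2,\infty}(\Delta_k)}.
\end{align*}

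Taking the maximum over the finitely many $\sigma \in \cT_0$ and combining with the norm equivalences above yields constants $C'_0, C'_1$ independent of $n$ such that $\varrho^{(0)}(\cT_n) \leq C'_0 \cdot 2^{-2n}$ and $\varrho^{(1)}(\cT_n) \leq C'_1 \cdot 2^{-n}$. The identical argument applied to $\cT_n|_{\partial \varSigma}$ with the induced metric $j^\pull g$ on $\partial \varSigma$, using $(k-1)$-simplices in place of $k$-simplices, produces matching rates for $\varrho^{(0)}(\cT_n|_{\partial \varSigma})$ and $\varrho^{(1)}(\cT_n|_{\partial \varSigma})$. Thus $\varrho(\cT_n) \to 0$ as $n\to\infty$, and for any prescribed $\varepsilon>0$ one simply picks $n$ large enough that $\varrho(\cT_n) \leq \varepsilon$. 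The main obstacle I anticipate is not analytical but combinatorial bookkeeping: one must verify that the chosen subdivision of $\Delta_k$ is consistent with all face identifications coming from $\cT_0$ (so that $\cT_n$ really is a smooth triangulation in the sense defined in \autoref{sec:MinimalSurfacesDiscrete}, rather than merely a family of simplices with matching vertex sets) and that iterated subdivision preserves a uniform lower bound on the aspect ratio --- both well known for the Freudenthal--Kuhn scheme, but worth invoking explicitly to justify the uniformity of the constant $C$ in the local Bramble--Hilbert step.
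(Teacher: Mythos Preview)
Your proposal is correct and follows essentially the same approach as the paper: start from an initial finite smooth triangulation and refine via an affine, aspect-ratio-preserving subdivision of $\Delta_k$. The paper's proof is a one-sentence sketch (mentioning only $4{:}1$-subdivision for $k=2$), whereas you supply the details the paper leaves implicit---naming Freudenthal--Kuhn for general $k$, checking face consistency, and invoking Bramble--Hilbert to obtain explicit rates.
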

\begin{proof}
In order to construct a sequence $(\cT_n)_{n \in \N}$ with $\varrho(\cT_n) \stackrel{n \to \infty}{\longrightarrow} 0$, one may start with an arbitrary smooth triangulation and successively apply an affine, aspect ratio preserving subdivision scheme to the standard simplex $\Delta_k$. For example, this can be achieved by $4:1$-subdivision in the case $k=2$.
\qed
\end{proof}

\subsection{Convergence Theorem}

We have now all ingredients to state the main theorem of this section. Let $\varSigma$ be a compact, $k$-dimensional smooth manifold with boundary, let $g$ be a smooth Riemannian metric on $\varSigma$ and let $\Curve \in \Imm(\partial\varSigma;\R^m)\cap W^{2,\infty}(\partial \varSigma;\R^m)$ be an embedding and hence a bi-Lipschitz homeomorphism onto its image.
Let $(\cT_n)_{n\in\N}$ be a sequence of smooth triangulations of $(\varSigma,g)$ with $\varrho_n \ceq \varrho(\cT_n) \stackrel{n \to \infty}{\longrightarrow} 0$. Instead of $\cC_{\cT_n}$, $\cF_{\cT_n}$, $\cA^r_{\cT_n},\dotsc$ we shall write $\cC_n$, $\cF_n$, $\cA^r_n,\dotsc$
As in \autoref{sec:paramopt}, we denote the sets of $\delta$-minimizers by $\cM^\delta \ceq \argmin^\delta(\cF|_\cC)$ and $\cM_n^\delta \ceq \argmin^\delta(\cF_n)$.

\begin{theorem}\label{theo:ConvofMinimalSurfaces}
Suppose $\emptyset \neq \varPsi(\MinF{}{}) \subset \varPsi(\cA^s\cap\MinF{}{})$ for some $s\in \intervaloo{0,\infty}$ and that the sets $\cA_{n}^r$ are valid\footnote{see \autoref{def:valid}.} for some $r \in \intervaloo{s,\infty}$ and all $n \in \N$.
Then there is a constant $C\geq 0$ depending on $\varSigma$, $g$, $\Curve$, $s$, $r$ only, such that
\begin{align*}
	\Ls_{n\to \infty} \varPsi_n(\cA^r_n \cap \MinF{n}{}) 
	\subset \varPsi(\MinF{}{}) 
	= \Lt_{n\to \infty} \varPsi_n(\cA^r_n \cap \MinF{n}{\delta_n})
\end{align*}
holds with $\delta_n \leq C \, \varrho_n$ for sufficiently large $n \in \N$. The convergence is with respect to the topology generated by $\dstrongShape$.
\end{theorem}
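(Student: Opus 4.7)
The plan is to cast the claim as an instance of \autoref{topstabandconv}, taking $\cA \ceq \cA^s$, $\cA_n \ceq \cA_n^r$, and as ambient reference set
$\cK \ceq \strongShapeQuot(\set{h \in \strongImm(\varSigma;\R^m) | \dPosDefSec(g, h^\pull g_0) \leq r'})$
for some $r'>r$. By \autoref{quotisHsd} the set $\cK$ is closed in $\cX$, and the a~priori assumption $\varPsi(\cA^s \cap \MinF{}{}) \supset \varPsi(\MinF{}{})$ together with $s<r<r'$ produce the set-theoretic inclusions $\varPsi(\MinF{}{}) \subset \varPsi(\cA \cap \MinF{}{}) \cap \cK$ required by that theorem. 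What then remains is to quantify consistency, proximity, topological stability, and the inclusions $\interwo_n(\cA^s)\subset \cA_n^r$ and $\varPsi(\reverwo_n(\cA_n^r))\subset \cK$ for large $n$.

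\noindent\textbf{Consistency and proximity.}
For $f\in\cA^s$ write $f_n \ceq (\treverwo_n \circ \interwo_n)(f)$. The bound $\norm{f}_{W^{2,\infty}_g}\leq s$ together with the definition of $\varrho(\cT_n)$ gives
\begin{align*}
\nnorm{f - f_n}_{L^\infty_g} + \nnorm{\dd f - \dd f_n}_{L^\infty_g} \leq 2s\,\varrho_n,
\end{align*}
and the analogous bound holds for $f|_{\partial\varSigma}$ via $\cT_n|_{\partial\varSigma}$. Routing these $W^{1,\infty}_g$-estimates through the embedding lemma \autoref{cor:W1inftyembLip} converts them into $\dstrongImm(f,f_n) \leq C_1\,\varrho_n$, so that \autoref{volumeislipschitz1} applied to the identity $\cF_n \circ \interwo_n = \cF\circ \treverwo_n\circ \interwo_n$ yields $\delta^{\interwo}_n\leq C_2\,\varrho_n$. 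Since $\dstrongShape(\strongShapeQuot(f_n),\strongShapeQuot(f))\leq \dstrongImm(f_n,f)$, the same bound controls $\varepsilon^{\interwo}_n$. For the reconstruction side, I would invoke the modified interpolant $\reverwo_n$ of \autoref{sec:reconstructionoperator}: $\reverwo_n$ agrees with $\treverwo_n$ outside a mesh-width collar of $\partial\varSigma$ and is corrected there to enforce $\reverwo_n(\cdot)|_{\partial\varSigma} = \Curve$. The $W^{2,\infty}$-regularity of $\Curve$ combined with interpolation estimates on the boundary triangulation gives $\nnorm{\Curve-\Curve_{\cT_n}}_{W^{1,\infty}} = O(\varrho_n)$, which bounds the collar correction in $W^{1,\infty}_g$; a second application of \autoref{volumeislipschitz1} then delivers $\delta^{\reverwo}_n + \varepsilon^{\reverwo}_n \leq C_3\,\varrho_n$. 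Altogether $\delta_n + \varepsilon_n \leq C\,\varrho_n$.

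\noindent\textbf{Stability and assembly.}
\autoref{volumeislipschitz2} makes $\strongShapeQuot_\#\cF$ locally Lipschitz on its domain of finiteness; extending by $+\infty$ off $\varPsi(\cC)$ in the spirit of \autoref{pushforwardalonginjections} (applied to the injection $\iota_\Curve \colon \cC \hookrightarrow \widetilde\cC$ followed by the quotient map), $\varPsi_\#\cF$ is lower semi-continuous on the closed set $\cK$, and \autoref{lem:lscimpliesstability} then produces topological stability of $\cF$ along $\varPsi$ over $\cK$. The inclusion $\interwo_n(\cA^s)\subset \cA_n^r$ for sufficiently large $n$ follows from $r>s$ once $\dPosDefSec(g,f_n^\pull g_0) \leq \dPosDefSec(g,f^\pull g_0) + O(\varrho_n)$, and $\varPsi(\reverwo_n(\cA_n^r))\subset \cK$ follows in the same manner from $r'>r$ and the $O(\varrho_n)$ collar perturbation. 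With all hypotheses in place, \autoref{topstabandconv} yields
\begin{align*}
\Ls_{n\to \infty} \varPsi_n(\cA_n^r\cap \MinF{n}{}) \subset \varPsi(\MinF{}{}) = \Lt_{n\to \infty} \varPsi_n(\cA_n^r \cap \MinF{n}{\delta_n}),
\end{align*}
and tracing the explicit constants through the consistency estimate gives $\delta_n \leq C\,\varrho_n$. The main obstacle I expect is the construction and analysis of $\reverwo_n$: the boundary-collar correction must enforce $\reverwo_n(\cdot)|_{\partial\varSigma}=\Curve$ exactly, preserve the strong Lipschitz-immersion property with uniformly bounded distortion (so that the image actually lands in $\cK$), and cost only $O(\varrho_n)$ in the $\dstrongImm$-metric; here both the $W^{2,\infty}$-regularity of $\Curve$ and the uniform non-degeneracy built into $\cA_n^r$ play an essential role.
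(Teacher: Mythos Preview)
Your overall strategy matches the paper's: both reduce to \autoref{topstabandconv}, and your consistency/proximity analysis is essentially the same as \autoref{lem:MinSurfProx} and \autoref{lem:MinSurfCons}. The paper actually takes the simpler choice $\cK = \cX$, which avoids your extra work of verifying $\varPsi(\reverwo_n(\cA_n^r))\subset\cK$ and of showing your $\cK$ is closed. (Your appeal to \autoref{quotisHsd} for the latter does not do the job: that theorem only says $\cX$ is complete, while your $\cK$ is the quotient image of a \emph{non-$\Diff(\varSigma)$-saturated} closed set, since $\dPosDefSec(g,h^\pull g_0)$ is not reparametrization invariant.)

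There is, however, a genuine gap in your stability argument. You assert that $\varPsi_\#\cF$ is lower semi-continuous on $\cK$ by ``extending by $+\infty$ off $\varPsi(\cC)$ in the spirit of \autoref{pushforwardalonginjections}''. But that mechanism (cf.\ \autoref{ex:extensionbyinftystab}) requires the set where the pushforward is finite---here $\varPsi(\cC)=\strongShapeQuot(\Immg(\varSigma;\R^m))$---to be \emph{closed} in $\cX$ (or at least closed in $\cK$). You do not verify this, and it is the nontrivial part of the proof. In the paper this is isolated as \autoref{lem:closednessofImmginstrongShape}: given a $\dstrongShape$-convergent sequence in $\strongShapeQuot(\Immg)$, one lifts to a $\dstrongImm$-Cauchy sequence, uses Arzel\`a--Ascoli on the induced boundary diffeomorphisms $\psi_n=\Curve^{-1}\circ\res(f_n)$ (here the bi-Lipschitz embedding hypothesis on $\Curve$ is essential), and then extends the limiting boundary diffeomorphism $\psi$ to an element of $\Diff(\varSigma)$ via a collar construction. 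Without this closedness, a limit of shapes satisfying the boundary condition need not itself admit a parametrization with $\res=\Curve$, and your l.s.c.\ claim fails. Once this lemma is in hand, the paper combines it with \autoref{volumeislipschitz2} and \autoref{lem:lscimpliesstability} exactly as you outline to obtain topological stability over all of $\cX$ (\autoref{lem:Fislowersemicont}).
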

\begin{proof}
We apply \autoref{topstabandconv} with $\cK \ceq \cX$. We are left with verifying the assumption of this theorem.
By \autoref{quotisHsd}, $(\cX,d_\cX)$ is a metric space.
Validity of $\cA^s$ with respect to $(\cF|_{\cC},\inter)$ follows from $\emptyset \neq \varPsi(\cM) \subset \varPsi(\cA^s\cap \cM)$ and the validity of $\cA^r_n$ was imposed as a condition.

Checking the remaining assumptions is the subject of the remainder of this section. We construct reconstruction operators $\reverwo_{\cT_n}$ in \autoref{sec:reconstructionoperator}. That $\interwo_{\cT_n}$ and $\reverwo_{\cT_n}$ are indeed sampling and reconstruction operators, i.e., that
\begin{align*}
	\interwo_{\cT_n}(\cA^s) \subset \cC_{\cT_n}
	\qand 
	\reverwo_{\cT_n}(\cA^r_{\cT_n}) \subset \cC
\end{align*}
for sufficiently ``fine'' triangulations, will be established in \autoref{goodapriories}. The very same lemma will also provide $\interwo_{\cT_n}(\cA^s) \subset \cA^r_{\cT_n}$.
Proximity and consistency errors will be computed in \autoref{lem:MinSurfProx} and \autoref{lem:MinSurfCons}. Finally, \autoref{lem:Fislowersemicont} shows that $\cF$~is topologically stable along $\varPsi$ over $\cX$. 
\qed
\end{proof}

\subsection{Reconstruction Operator}\label{sec:reconstructionoperator}

For $\Curve \in \Imm(\partial \varSigma;\R^m) \cap W^{2,\infty}(\partial \varSigma;\R^m)$, $f \in \Imm(\varSigma;\R^m) \cap W^{2,\infty}(\varSigma;\R^m)$, and $\Curve_\cT$ as defined in \eqref{eq:GammaT} we obtain the relative approximation errors:
\begin{align}
	\norm{\Curve-\Curve_\cT}_{W^{1,\infty}_g} 
	&\leq \nnorm{\dd \Curve}_{W^{1,\infty}_{g}} \, \varrho(\cT),
	\label{Lipschitzdistancebnd}
	\\
	\nnorm{f - (\treverwo_\cT\circ \interwo_\cT)(f)}_{W^{1,\infty}_g} 
	&\leq  \nnorm{\dd f}_{W^{1,\infty}_g} \, \varrho(\cT).
	\label{proximityoftrever}
\end{align}
Let $\on{ext} \colon W^{1,\infty}_{g|_{\partial \varSigma}}(\partial \varSigma;\R^m) \to W^{1,\infty}_g(\varSigma;\R^m)$ be a continuous, linear extension operator\footnote{Such an operator can be obtained, e.g., by choosing a smooth collar $\varPhi \colon \partial \varSigma \times \intervalco{0,1} \stackrel{\sim}{\to} U \subset \varSigma$ and by using the function $\chi \colon \intervalco{0,1} \to \R$, $\chi(t) = \exp\nparen{\frac{t^2}{t^2-t}}$: Then
\begin{align*}
	\on{ext}(u)(x) \ceq 
	\begin{cases}
		(u \otimes \chi)\circ \varPhi^{-1}(x), & x\in U,\\
		0, & x\in \varSigma\setminus U.
	\end{cases}
\end{align*}
is the desired extension operator.
}
 and 	let $u_\cT \ceq \on{ext}(\Curve-\Curve_\cT)$.
Now, \eqref{Lipschitzdistancebnd} provides us with the estimate
\begin{align}
	\norm{u_\cT}_{W^{1,\infty}_g} 
	\leq \norm{\on{ext}}\, \norm{\Curve-\Curve_\cT}_{W^{1,\infty}_g}
	\leq \norm{\on{ext}}\, \nnorm{\dd \Curve}_{W^{1,\infty}_{g}} \, 
	\varrho(\cT).\label{boundaryextension}
\end{align}
We define the following operator:
\begin{align*}
	\reverwo_\cT \colon \cA_\cT^r \to W^{1,\infty}_g(\varSigma;\R^m),
	\qquad 
	f \mapsto \treverwo_\cT(f) + u_\cT.
\end{align*}
The following lemma assures us that $\interwo_\cT$ and $\reverwo_\cT$ are indeed sampling and reconstruction operators, respectively, i.e., $\interwo_\cT(\cA^s) \subset \cC_\cT$ and $\reverwo_\cT(\cA^r_\cT) \subset \cC$---at least for sufficiently ``fine'' triangulations. It also verifies the condition $\interwo_\cT(\cA^s) \subset \cA^r_\cT$ of \autoref{topstabandconv}:
\begin{lemma}\label{goodapriories}
Let $r>s>0$ and $c>0$. Then there is $\varrho_0 >0$ such that for every smooth triangulation $\cT$ with $\varrho(\cT) \leq \varrho_0$ the following hold:
\begin{align*}
	\interwo_\cT(\cA^s) \subset \cA^r_\cT
	\qand
	\reverwo_\cT(\cA^r_\cT) \subset \cC.
\end{align*}
\end{lemma}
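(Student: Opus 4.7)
The plan is to verify each inclusion by transporting the defining conditions through the $W^{1,\infty}_g$-perturbations supplied by \eqref{proximityoftrever} and \eqref{boundaryextension}, and then appealing to the stability of the distortion functional $f \mapsto \dPosDefSec(g, f^\pull g_0)$ under such perturbations. The key ingredient, already used in the proof of \autoref{theo:Immiscomplete}, is the continuous embedding $W^{1,\infty}_g \hookrightarrow (\Imm, \dImm)$ from \autoref{cor:W1inftyembLip}: for an immersion $h$ with bounded distortion and a $W^{1,\infty}_g$-perturbation whose size is controlled in terms of this distortion, the perturbed map remains an immersion and its $\dPosDefSec$-distance from $h^\pull g_0$ is controlled linearly in the perturbation.

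For the first inclusion, fix $f \in \cA^s$ and set $f_\cT \ceq \treverwo_\cT(\interwo_\cT(f))$. The boundary condition $\interwo_\cT(f)|_{V_{\on{b}}(\cT)} = \Curve|_{V_{\on{b}}(\cT)}$ is immediate from $V_{\on{b}}(\cT) \subset \partial \varSigma$ and $f|_{\partial\varSigma} = \Curve$. Estimate \eqref{proximityoftrever} together with $\|\dd f\|_{W^{1,\infty}_g} \leq \|f\|_{W^{2,\infty}_g} \leq s$ yields $\|f - f_\cT\|_{W^{1,\infty}_g} \leq s\,\varrho(\cT)$. Choosing $\varrho_0$ small enough that $s\,\varrho_0$ lies in the stability radius of \autoref{cor:W1inftyembLip} at distortion $s$ produces a constant $C_1 = C_1(s)$ with $\dPosDefSec(f^\pull g_0, f_\cT^\pull g_0) \leq C_1\, s\,\varrho(\cT)$. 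A triangle inequality then gives $\dPosDefSec(g, f_\cT^\pull g_0) \leq s + C_1 s\,\varrho(\cT)$, which is at most $r$ provided $C_1 s\,\varrho_0 \leq r - s$. In particular $f_\cT^\pull g_0$ is pointwise positive definite, so $\dd f_\cT$ is injective almost everywhere; since $f_\cT$ is affine on each simplex, the vertex set $f(V(\sigma)) = f_\cT(V(\sigma))$ is in general position for every $\sigma \in \cT$, and $\interwo_\cT(f) \in \cA^r_\cT$.

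For the second inclusion, fix $f \in \cA^r_\cT$; then $\treverwo_\cT(f) \in \strongImm(\varSigma;\R^m)$ has distortion at most $r$, and estimate \eqref{boundaryextension} yields
\[
\|u_\cT\|_{W^{1,\infty}_g} \leq \|\on{ext}\|\,\|\dd\Curve\|_{W^{1,\infty}_g}\,\varrho(\cT).
\]
A second application of \autoref{cor:W1inftyembLip}, this time at distortion budget $r$, shows that for $\varrho_0$ sufficiently small the sum $\reverwo_\cT(f) = \treverwo_\cT(f) + u_\cT$ remains in $\Imm(\varSigma;\R^m)$ with controlled distortion. For the boundary we compute $\reverwo_\cT(f)|_{\partial\varSigma} = \Curve_\cT + (\Curve - \Curve_\cT) = \Curve$; since $\Curve \in \Imm(\partial\varSigma;\R^m)$ by hypothesis, it follows that $\reverwo_\cT(f) \in \strongImm(\varSigma;\R^m)$ and hence $\reverwo_\cT(f) \in \Immg(\varSigma;\R^m) = \cC$. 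The main obstacle is coordinating the threshold $\varrho_0$ so that both inclusions succeed at once: one needs $\varrho_0$ below the stability radii of \autoref{cor:W1inftyembLip} at distortions $s$ and $r$, and additionally satisfying $C_1 s\,\varrho_0 \leq r - s$. All of these are finitely many explicit smallness conditions depending only on $\varSigma$, $g$, $\Curve$, $s$, $r$, $\|\on{ext}\|$, and the auxiliary parameter $c$, so taking their minimum yields the required $\varrho_0$.
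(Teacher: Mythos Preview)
Your proof is correct and follows essentially the same approach as the paper: bound the $W^{1,\infty}_g$-perturbation via \eqref{proximityoftrever} and \eqref{boundaryextension}, then invoke the stability estimate of \autoref{cor:W1inftyembLip} (the paper cites \autoref{PosDefopeninSym} directly for the $\dPosDefSec$ bound, but \autoref{cor:W1inftyembLip} subsumes this) together with the triangle inequality for $\dPosDefSec$. You also spell out a few points the paper leaves implicit, such as the general-position check and the explicit verification that $\reverwo_\cT(f)|_{\partial\varSigma}=\Curve$.
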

\begin{proof}
Let $f\in \cA^s$ and put $f_\cT\ceq (\treverwo_\cT \circ \interwo_\cT)(f)$. 
By \eqref{proximityoftrever}, we have $\norm{f-f_\cT}_{W^{1,\infty}_g} \leq s\, \varrho_0$.
\autoref{cor:W1inftyembLip} tells us how small $\varrho_0$ has to be (depending on $s$ only) so that $f_\cT \in \Imm(\varSigma;\R^m)$ and thus $\interwo_\cT(f) \in \cC_\cT$.
From \autoref{PosDefopeninSym},
we infer the inequality
\begin{gather*}
	\dPosDefSec(g, f_\cT^\pull g_0) 
	\leq \dPosDefSec(g, f^\pull g_0) + \dPosDefSec(f^\pull g_0,f_\cT^\pull g_0) 
	\leq s + C(s) \, \varrho_0
\end{gather*}
which shows that $ \interwo_\cT(f) \in \cA^r_\cT$ if $\varrho_0$ is sufficiently small.

Now, let $f \in \cA^r_\cT$. We have $\treverwo_\cT(f)\in \strongImm(\varSigma;\R^m)$ and $\dPosDefSec(g, \treverwo_\cT(f)^\pull g_0)\leq r$. By \eqref{boundaryextension}, we obtain
\begin{align*}
	\nnorm{\reverwo_\cT(f) - \treverwo_\cT(f)}_{W^{1,\infty}_g} 
	= \norm{u_\cT}_{W^{1,\infty}_g} 
	\leq C \, \nnorm{\dd \Curve}_{W^{1,\infty}_g} \, \varrho_0.
\end{align*}
\autoref{cor:W1inftyembLip} tells us how $\varrho_0$ has to be chosen depending on $r$ such that $\reverwo_\cT(f) \in \widetilde \cC$. Since $\reverwo_\cT(f)$ fulfills the boundary conditions by construction, we even have $\reverwo_\cT(f) \in \cC$.
\qed
\end{proof}

\subsection{Proximity}

\begin{lemma}\label{lem:MinSurfProx}
Let $r>s>0$. Then there is $\varrho_0 >0$ and a constant $C \geq 0$ such that 
for all smooth triangulations with $\varrho(\cT) \leq \varrho_0$, 
one has the following estimates for sampling and reconstruction proximity errors $\varepsilon_\cT^{\interwo}$, $\varepsilon_\cT^{\reverwo}$ of $(\varPsi,\varPsi_\cT,\interwo_\cT, \reverwo_\cT)$ on $(\cA^s,\cA^r_\cT)$:
\begin{align*}
	\varepsilon_\cT^{\interwo} \leq  C \, \varrho(\cT)
	\quad
	\text{and}
	\quad
	\varepsilon_\cT^{\reverwo} \leq  C \, \varrho(\cT).
\end{align*}
\end{lemma}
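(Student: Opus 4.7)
The plan is to bound both proximity errors by a $W^{1,\infty}_g$-distance via the a~priori information, and then to convert that $W^{1,\infty}_g$-estimate into a $\dstrongImm$-estimate using \autoref{cor:W1inftyembLip}. Since $\dstrongShape$ is by definition the quotient semi-metric of $\dstrongImm$, we have the pointwise bound
\begin{align*}
	d_\cX(\strongShapeQuot(h_1),\strongShapeQuot(h_2)) \leq \dstrongImm(h_1,h_2),
\end{align*}
so it suffices to control $\dstrongImm(\treverwo_\cT(\interwo_\cT(f)), f)$ for $f\in\cA^s$ and $\dstrongImm(\reverwo_\cT(f),\treverwo_\cT(f))$ for $f\in \cA^r_\cT$, together with their traces on $\partial\varSigma$.

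For the sampling proximity error: Fix $f\in\cA^s$ and set $f_\cT \ceq \treverwo_\cT(\interwo_\cT(f))$. The approximation estimate \eqref{proximityoftrever}, combined with the a~priori bound $\nnorm{\dd f}_{W^{1,\infty}_g} \leq \norm{f}_{W^{2,\infty}_g}\leq s$, gives $\nnorm{f-f_\cT}_{W^{1,\infty}_g} \leq s\,\varrho(\cT)$. Since also $\dPosDefSec(g,f^\pull g_0) \leq s$, \autoref{cor:W1inftyembLip} applies (for $\varrho_0$ small enough, depending only on $s$) and yields $\dImm(f,f_\cT) \leq C(s)\,\nnorm{f-f_\cT}_{W^{1,\infty}_g} \leq C(s)\,s\,\varrho(\cT)$. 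The same argument applied to $f|_{\partial\varSigma} \in \Imm(\partial\varSigma;\R^m)$ (whose relevant norms are controlled by $\norm{f}_{W^{2,\infty}_g}$ via the trace estimate, and using $\varrho(\cT|_{\partial \varSigma}) \leq \varrho(\cT)$) bounds the boundary contribution by $C\,\varrho(\cT)$. Adding the two contributions yields $\dstrongImm(f_\cT,f)\leq C\,\varrho(\cT)$, and hence $\varepsilon^{\interwo}_\cT \leq C\,\varrho(\cT)$.

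For the reconstruction proximity error: Fix $f\in \cA^r_\cT$. By construction $\reverwo_\cT(f) - \treverwo_\cT(f) = u_\cT$, so \eqref{boundaryextension} yields
\begin{align*}
	\nnorm{\reverwo_\cT(f)-\treverwo_\cT(f)}_{W^{1,\infty}_g} \leq \norm{\on{ext}}\, \nnorm{\dd \Curve}_{W^{1,\infty}_g}\, \varrho(\cT).
\end{align*}
Since $f\in\cA^r_\cT$, we have $\dPosDefSec(g,\treverwo_\cT(f)^\pull g_0)\leq r$, which is exactly the hypothesis needed to apply \autoref{cor:W1inftyembLip}: provided $\varrho_0$ is small enough (depending on $r$), the resulting $\dImm$-distance is bounded by a constant times $\varrho(\cT)$. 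On the boundary, $\reverwo_\cT(f)|_{\partial\varSigma} = \Curve$ while $\treverwo_\cT(f)|_{\partial\varSigma} = \Curve_\cT$, so the boundary contribution equals $\dImm(\Curve,\Curve_\cT)$; by \eqref{Lipschitzdistancebnd} together with a further application of \autoref{cor:W1inftyembLip} (whose hypothesis is met because $\Curve$ is a bi-Lipschitzian embedding with uniformly bounded distortion), this is also bounded by $C\,\varrho(\cT)$. Summing gives $\varepsilon^{\reverwo}_\cT \leq C\,\varrho(\cT)$.

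The main technical point is ensuring that the threshold $\varrho_0$ from \autoref{cor:W1inftyembLip} can be chosen uniformly over $\cA^s$ and $\cA^r_\cT$; this is where the assumption $r>s$ and the uniform distortion control on $\treverwo_\cT(f)$ (respectively, on $\Curve$) enter. Once this uniformity is in place, the two proximity estimates follow by the routine combination of \eqref{proximityoftrever}, \eqref{boundaryextension}, \eqref{Lipschitzdistancebnd}, and \autoref{cor:W1inftyembLip} outlined above. \qed
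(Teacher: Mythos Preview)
Your proof is correct and follows essentially the same route as the paper: bound $d_\cX$ by $\dstrongImm$ via the quotient map, split $\dstrongImm$ into interior and boundary $\dImm$-contributions, and control each through \autoref{cor:W1inftyembLip} together with \eqref{proximityoftrever}, \eqref{boundaryextension}, and \eqref{Lipschitzdistancebnd}. One small simplification you overlook in the sampling step: since $f\in\cA^s\subset\Immg(\varSigma;\R^m)$, one has $f|_{\partial\varSigma}=\Curve$ exactly, so the boundary contribution is simply $\dImm(\Curve,\Curve_\cT)$ and does not depend on $f$; there is no need to invoke trace estimates on $\nnorm{f}_{W^{2,\infty}_g}$.
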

\begin{proof}
From \autoref{cor:W1inftyembLip}, \eqref{proximityoftrever}, and
\eqref{Lipschitzdistancebnd},
one obtains for the sampling proximity error:
\begin{align*}
	\varepsilon_\cT^{\interwo} 
	&=
	\sup_{f\in \cA^s} d_\cX \bigparen{ \varPsi(f), (\varPsi_n\circ \interwo_\cT) (f)}
	\\
	&=
	\sup_{f\in \cA^s} d_\cX \bigparen{ \strongShapeQuot(f), (\strongShapeQuot\circ \treverwo_\cT \circ \interwo_\cT) (f)}
	\\
	&\leq	
	\sup_{f\in \cA^s} \dstrongImm\bigparen{ f, (\treverwo_\cT \circ \interwo_\cT) (f)}
	\\
	&\leq	
	\sup_{f\in \cA^s} \dImm\bigparen{ f, (\treverwo_\cT \circ \interwo_\cT) (f)}
	+ \dImm ( \gamma, \gamma_\cT)
	\\
	&\leq
	C(s) \, \sup_{f\in \cA^s} \nnorm{f-(\treverwo_\cT \circ \interwo_\cT) (f)}_{W^{1,\infty}_g} + C\, \nnorm{\gamma-\gamma_\cT}_{W^{1,\infty}_g}
	\leq C\, \varrho(\cT).
\end{align*}
\autoref{cor:W1inftyembLip}, \eqref{boundaryextension}, and \eqref{Lipschitzdistancebnd} imply for the reconstruction proximity error:
\begin{align*}
	\varepsilon_\cT^{\reverwo} 
	&=
	\sup_{f\in \cA^s_\cT} d_\cX \bigparen{ \varPsi_\cT(f), (\varPsi \circ \reverwo_\cT)(f)}
	\\
	&=
	\sup_{f\in \cA^s_\cT} d_\cX \bigparen{ (\strongShapeQuot \circ \treverwo_\cT)(f), (\strongShapeQuot \circ \reverwo_\cT)(f)}
	\\
	&\leq
	\sup_{f\in \cA^s_\cT} \dstrongImm \bigparen{ \treverwo_\cT(f), \reverwo_\cT(f)}
	\\
	&\leq
	\sup_{f\in \cA^s_\cT} \dImm \bigparen{ \treverwo_\cT(f), \reverwo_\cT(f)} + \dImm ( \gamma_\cT, \gamma)
	\\
	&\leq C(s) \, \sup_{f\in \cA^s_\cT} \nnorm{\treverwo_\cT(f)-\reverwo_\cT(f)}_{W^{1,\infty}_g} + C\, \nnorm{\gamma-\gamma_\cT}_{W^{1,\infty}_g}
	\\
	&\leq C(s) \, \nnorm{u_\cT}_{W^{1,\infty}_g} + C\, \nnorm{\gamma-\gamma_\cT}_{W^{1,\infty}_g}
	\leq C\, \varrho(\cT).
\end{align*}
\qed
\end{proof}

\subsection{Consistency}

\begin{lemma}\label{lem:MinSurfCons}
Let $r>s>0$. Then there is $\varrho_0 >0$ and a constant $C \geq 0$ such that 
for all smooth triangulations with $\varrho(\cT) \leq \varrho_0$, 
one has the following estimates for sampling and reconstruction consistency errors $\delta_\cT^{\interwo}$, $\delta_\cT^{\reverwo}$ of $(\cF_\cT,\interwo_\cT, \reverwo_\cT)$ with respect to $\cF$ on $(\cA^s,\cA^r_\cT)$:
\begin{align*}
	\delta_\cT^{\interwo} \leq  C \, \varrho(\cT)
	\quad
	\text{and}
	\quad
	\delta_\cT^{\reverwo} \leq  C \, \varrho(\cT).
\end{align*}
\end{lemma}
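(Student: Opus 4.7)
The plan is to bound each of the two consistency errors by combining the Lipschitz estimate for the volume functional (\autoref{volumeislipschitz1}) with the approximation properties \eqref{proximityoftrever} and \eqref{boundaryextension} already derived for the reconstruction operator, and with the continuity of the embedding $W^{1,\infty}_g \hookrightarrow \Imm$ provided by \autoref{cor:W1inftyembLip}. Throughout, I will shrink $\varrho_0$ whenever necessary so that all uses of \autoref{cor:W1inftyembLip} are legitimate; the constants coming out of it will depend only on $r$, $s$, $g$, and $\Curve$, which is precisely the freedom allowed by the statement. A key preliminary observation, which keeps the estimates uniform, is that $\cF$ is uniformly bounded on $\cA^s$ and $\cF_\cT=\cF\circ \treverwo_\cT$ is uniformly bounded on $\cA^r_\cT$: both sets control $\dPosDefSec(g,\cdot)$ by a fixed constant, so \autoref{volumeislipschitz} applied to the pullback metrics gives the bound.

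For the sampling error, fix $f\in \cA^s$ and set $f_\cT \ceq (\treverwo_\cT\circ\interwo_\cT)(f)$. The identity $\cF_\cT(\interwo_\cT(f))=\cF(f_\cT)$ is built into the definition of $\cF_\cT$, so it suffices to control $\cF(f_\cT)-\cF(f)$. The estimate \eqref{proximityoftrever} together with $\|\dd f\|_{W^{1,\infty}_g}\le s$ gives $\|f-f_\cT\|_{W^{1,\infty}_g}\le s\,\varrho(\cT)$, and then \autoref{cor:W1inftyembLip} upgrades this to $\dPosDefSec(f^\pull g_0,f_\cT^\pull g_0)\le C(s)\,\varrho(\cT)$ once $\varrho_0$ is small enough. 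Finally \autoref{volumeislipschitz1} yields
\begin{align*}
    \cF(f_\cT)-\cF(f) \;\le\; \cF(f)\, \ee^{\sqrt{k}\,C(s)\,\varrho_0}\, \sqrt{k}\, C(s)\,\varrho(\cT),
\end{align*}
and combining with the uniform bound on $\cF(f)$ gives $\delta^{\interwo}_\cT \le C\,\varrho(\cT)$.

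For the reconstruction error, fix $f\in \cA^r_\cT$ and set $\tilde f\ceq \treverwo_\cT(f)$, so that $\reverwo_\cT(f)=\tilde f+u_\cT$ and $\cF_\cT(f)=\cF(\tilde f)$. By \eqref{boundaryextension} one has $\|\tilde f-\reverwo_\cT(f)\|_{W^{1,\infty}_g}=\|u_\cT\|_{W^{1,\infty}_g}\le C\,\|\dd\Curve\|_{W^{1,\infty}_g}\,\varrho(\cT)$. Since $\dPosDefSec(g,\tilde f^\pull g_0)\le r$, a second application of \autoref{cor:W1inftyembLip} (valid for $\varrho_0$ small enough in terms of $r$) turns this into $\dPosDefSec(\tilde f^\pull g_0,\reverwo_\cT(f)^\pull g_0)\le C(r)\,\varrho(\cT)$. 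Then \autoref{volumeislipschitz1} together with the uniform bound on $\cF(\tilde f)=\cF_\cT(f)$ gives $\cF(\reverwo_\cT(f))-\cF_\cT(f)\le C\,\varrho(\cT)$, as required.

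The only mildly delicate point is the bookkeeping of the constants, to ensure that the thresholds $\varrho_0$ and multiplicative factors depend only on $s$ and $r$ (not on the individual $f$ or on the triangulation). This is handled by the a~priori bounds built into $\cA^s$ and $\cA^r_\cT$, which are stable under the small $W^{1,\infty}_g$-perturbations caused by passing to $f_\cT$ or to $\reverwo_\cT(f)$; everything else is a formal chaining of the already-established Lipschitz statements.
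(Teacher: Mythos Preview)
Your proof is correct and follows essentially the same route as the paper's own argument: both reduce the consistency errors to the local Lipschitz estimate for the volume functional (\autoref{volumeislipschitz1}) combined with \autoref{cor:W1inftyembLip} and the approximation bounds \eqref{proximityoftrever} and \eqref{boundaryextension}. You are slightly more explicit about the uniform boundedness of $\cF(f)$ and $\cF(\treverwo_\cT(f))$ on the a~priori sets, which the paper absorbs silently into the constants $C(s)$ and $C(r)$, but this is the only difference.
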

\begin{proof}
Fix $f \in \cA^s$ and abbreviate $f_\cT \ceq (\treverwo_\cT \circ \interwo_\cT)(f)$. Observe $(\cF_\cT \circ \interwo_\cT)(f) = \cF(f_\cT)$.
By \autoref{volumeislipschitz1}, \autoref{cor:W1inftyembLip}, and \eqref{proximityoftrever}, we obtain
\begin{align*}
	(\cF_\cT \circ \interwo_\cT)(f)  - \cF(f)
	&= \cF (f_\cT)  - \cF(f)
	\\
	&\leq \cF(f) \, \ee^{\sqrt{k} \, \dImm(f_\cT,f)}\,\sqrt{k} \, \dImm(f_\cT,f)
	\\
	&\leq C(s) \, \norm{f_\cT-f}_{W^{1,\infty}_g} \leq C(s) \, \varrho(\cT),
\end{align*}
thus the sampling consistency error $\delta^\interwo_\cT$ is bounded by $\delta^\interwo_\cT \leq C \, \varrho(\cT)$.

For $f \in \cA_\cT^r$, put $f_\cT = \treverwo_\cT(f)$.
From local Lipschitz continuity of $\cF$ (see \autoref{volumeislipschitz1}) and \eqref{boundaryextension}, we deduce
\begin{align*}
	(\cF \circ \reverwo_\cT)(f) - \cF_\cT(f)
	&= \cF (f_\cT + u_\cT) - \cF(f_\cT)
	\\
	&\leq \cF(f_\cT) \,	C(r) \, \norm{u_\cT}_{W^{1,\infty}_g} \leq C(r) \, \varrho(\cT).
\end{align*}
Thus, we obtain $\delta^\reverwo_\cT \leq C \, \varrho(\cT)$ as desired. 
\qed
\end{proof}

\subsection{Topological Stability}

\begin{lemma}\label{lem:Fislowersemicont}
Let $\gamma \in C^1(\partial \varSigma;\R^m)$ be a bi-Lipschitz homeomorphism.
Then the function $\varPsi_\push\iota_\Curve^\pull \cF$ is topologically stable along $\varPsi$ over $\strongShape(\varSigma;\R^m)$.
\end{lemma}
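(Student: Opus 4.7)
The plan is to invoke \autoref{lem:lscimpliesstability} with $Y = \cC = \Immg(\varSigma;\R^m)$, $F = \iota_\Curve^\pull \cF$, $X = \cX$, $\psi = \varPsi$, and $K = \cX$. If $\cC$ is empty the stability identity reduces to $\varnothing = \varnothing$, so assume $\cC \neq \varnothing$, in which case $\inf F \leq \cF(f) < \infty$ for any $f \in \cC$. The equality $\varPsi(\argmin^\varrho F) = \argmin^\varrho(\varPsi_\push F)$ for every $\varrho \in \intervalco{0,\infty}$ then follows from the second part of \autoref{pushforwardproperties}: the chain rule~\eqref{eq:chainrule} together with the transformation law of the volume density renders $\cF$ invariant under the right $\Diff(\varSigma)$-action, so $F$ is constant on every non-empty $\varPsi$-slice and attains its infimum there.

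The principal verification is lower semi-continuity of $\varPsi_\push F$ on $\cX$. The same $\Diff(\varSigma)$-invariance identifies $\varPsi_\push F$ on $\varPsi(\cC)$ with the pushforward $\strongShapeQuot_\push \cF$, which is continuous on all of $\cX$ by \autoref{volumeislipschitz2}, while $\varPsi_\push F \equiv +\infty$ on $\cX \setminus \varPsi(\cC)$. Invoking the extension-by-infinity principle of \autoref{ex:extensionbyinftystab}, lower semi-continuity reduces to proving that $\varPsi(\cC)$ is closed in $(\cX,\dstrongShape)$.

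For closedness, take $x \in \overline{\varPsi(\cC)}$ and fix a representative $h \in \strongShapeQuot^{-1}(x)$. The definition of the quotient semi-metric yields $f_n \in \cC$ and $\varphi_n \in \Diff(\varSigma)$ with $\dstrongImm(f_n, h\circ\varphi_n) \to 0$. Setting $\psi_n \ceq \varphi_n|_{\partial\varSigma}$, the trace lemma \autoref{lem:strongImmhastrace} gives $h|_{\partial\varSigma}\circ \psi_n \to \Curve$ in $\strongImm(\partial\varSigma;\R^m)$, and the Arzel\`{a}--Ascoli argument from the proof of \autoref{quotisHsd}, applied on the compact manifold $\partial\varSigma$, extracts a subsequence with $\psi_n \to \psi \in \Diff(\partial\varSigma)$ uniformly and $h|_{\partial\varSigma}\circ \psi = \Curve$.

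The remaining step, which is the genuine technical obstacle, is to extend $\psi$ to some $\tilde\varphi \in \Diff(\varSigma)$. Observing that, for $n$ large, the correction $\xi_n \ceq \psi_n^{-1}\circ\psi$ lies in a small neighborhood of $\id_{\partial\varSigma}$, one uses a smooth collar $\Phi\colon \partial\varSigma\times\intervalco{0,1} \hookrightarrow \varSigma$ to interpolate $\xi_n$ to the identity along the collar parameter and extend by the identity outside. For $\xi_n$ sufficiently close to the identity this produces a bi-Lipschitz $\tilde\xi_n \in \Diff(\varSigma)$. Then $\tilde\varphi \ceq \varphi_n \circ \tilde\xi_n$ satisfies $\tilde\varphi|_{\partial\varSigma} = \psi_n\circ\xi_n = \psi$, so $h\circ\tilde\varphi \in \cC$ represents $x$, proving $x \in \varPsi(\cC)$. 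Controlling the bi-Lipschitz constants in the collar interpolation---which uses the $C^1$-regularity of $\Curve$ and the uniform Lipschitz bounds on $\psi_n$ inherited from the Arzel\`{a}--Ascoli step---is the heart of the technical work.
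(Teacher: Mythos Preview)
Your strategy coincides with the paper's: apply \autoref{lem:lscimpliesstability} after checking (i) constancy of $\cF$ on $\Diff(\varSigma)$-orbits gives $\varPsi(\argmin^\varrho F)=\argmin^\varrho(\varPsi_\push F)$, and (ii) $\varPsi_\push F$ is lower semi-continuous, which reduces to closedness of $\strongShapeQuot(\Immg(\varSigma;\R^m))$; the paper isolates this last point as \autoref{lem:closednessofImmginstrongShape}. Your closedness argument is organized a little differently---you fix a representative $h$ of the limit and reparametrize $h$, whereas the paper lifts the approximating sequence to a Cauchy sequence in $\strongImm$ and reparametrizes its limit $f$---and your extension $\tilde\varphi=\varphi_n\circ\tilde\xi_n$ is in fact more direct than the paper's push-in-and-glue construction in Claim~II.

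One point deserves sharpening: for the collar interpolation of $\xi_n=\psi_n^{-1}\circ\psi$ to produce a bi-Lipschitz $\tilde\xi_n$, uniform closeness together with uniform Lipschitz bounds on $\psi_n$ is \emph{not} enough when $\dim\partial\varSigma\geq 2$ (the linear or geodesic interpolation between $T\xi_n$ and $I$ can become singular); you actually need $\xi_n\to\id$ in $W^{1,\infty}$. This is precisely what the $C^1$-regularity of $\Curve$ delivers: from $h|_{\partial\varSigma}\circ\psi=\Curve$ one obtains $\Curve\circ(\psi^{-1}\circ\psi_n)\to\Curve$ in $W^{1,\infty}$, and then uniform continuity of $\dd\Curve$ (hence of $\dd\Curve^{-1}$ on the compact image) upgrades this to $\psi^{-1}\circ\psi_n\to\id$ in $W^{1,\infty}$. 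The paper makes the analogous step explicit as Claim~I of \autoref{lem:closednessofImmginstrongShape}; you should state this upgrade rather than folding it into ``controlling the bi-Lipschitz constants''.
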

\begin{proof}
We assume $\Immg(\varSigma;\R^m) \neq \emptyset$, thus $\inf(\iota_\Curve^\pull\cF) < \infty$.
For each $x \in \strongShape(\varSigma;\R^m)$ with $(\varPsi_\push\iota_\Curve^\pull \cF)(x) < \infty$ there is an $f \in \Immg(\varSigma;\R^m)$ with $\strongShapeQuot(f) =x$. Moreover,
$\varPsi^{-1}(x)$ is contained in the $\Diff(\varSigma)$-orbit $f \circ \Diff(\varSigma)$ and $\cF$ is constant on all orbits.
Thus, $\iota_\gamma^\pull \cF$ is constant on $\varPsi^{-1}(x)$ and we have
$(\iota_\Curve^\pull\cF)(f) = \inf_{h \in \varPsi^{-1}(x)} (\iota_\Curve^\pull\cF)(h)$.
Applying \autoref{pushforwardproperties}, we obtain
\begin{align*}
	\varPsi(\argmin^\varrho(\iota_\Curve^\pull \cF))
	=
	\argmin^\varrho(\varPsi_\push\iota_\Curve^\pull \cF).
\end{align*}
Furthermore, this also shows that
\begin{align*}
	(\varPsi_\push\iota_\Curve^\pull \cF)(x)
	=
	\begin{cases}
		(\strongShapeQuot_\push \cF)(x), & \text{$x \in \strongShapeQuot(\Immg(\varSigma;\R^m))$,}
		\\
		\infty, &\text{else}.
	\end{cases}
\end{align*}
By \autoref{lem:closednessofImmginstrongShape} below, the set $\strongShapeQuot(\Immg(\varSigma;\R^m))$ is closed  and by \autoref{volumeislipschitz2}, the function
$(\strongShapeQuot_\push \cF)$ is continuous. Hence $\varPsi_\push\iota_\Curve^\pull \cF$ is lower semi-continuous.
Finally, \autoref{lem:lscimpliesstability} finishes the proof.
\qed
\end{proof}

\begin{lemma}\label{lem:closednessofImmginstrongShape}
Let $\varSigma$ be a compact smooth manifold with boundary and let $\Curve$ be a \emph{continuously} differentiable and bi-Lipschitzian homeomorphism onto its image.
\newline
Then $\strongShapeQuot(\Immg(\varSigma;\R^m))$ is a closed subset of $\strongShape(\varSigma;\R^m)$.
\end{lemma}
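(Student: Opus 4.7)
The plan is a three-step reduction via the boundary trace. First I would take a sequence $([f_n])_{n\in\N}$ in $\strongShapeQuot(\Immg(\varSigma;\R^m))$ converging to some $[f] \in \strongShape(\varSigma;\R^m)$, and choose representatives so that $f_n \in \Immg(\varSigma;\R^m)$, i.e.\ $f_n|_{\partial \varSigma} = \Curve$ for every $n$. By the definition of $\dstrongShape$ as a quotient semi-metric, there exist $\varphi_n \in \Diff(\varSigma)$ with $\dstrongImm(f_n, f \circ \varphi_n) \to 0$. Setting $\psi_n \ceq \varphi_n|_{\partial \varSigma}$, the Lipschitz continuity of the trace (\autoref{lem:strongImmhastrace}) yields
\begin{align*}
	\dImm(\Curve, f|_{\partial \varSigma} \circ \psi_n) = \dImm(\res(f_n), \res(f \circ \varphi_n)) \converges{n \to \infty} 0.
\end{align*}

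Next I would extract a limit on the boundary by rerunning the definiteness half of the proof of \autoref{quotisHsd} with $\varSigma$ replaced by the closed manifold $\partial\varSigma$, with the roles $f \leftrightarrow \Curve$ and $h \leftrightarrow f|_{\partial \varSigma}$, and $\varphi_\alpha \leftrightarrow \psi_n$. The crucial point enabling this is that $\Curve^\pull g_0$ is a fixed element of $\PosDefSec(\partial \varSigma)$, so $(f|_{\partial \varSigma} \circ \psi_n)^\pull g_0 \to \Curve^\pull g_0$ in $\dPosDefSec$ forces a uniform bound on the pullback metrics---whereas in the interior the analogous pullbacks $(f\circ\varphi_n)^\pull g_0$ are only close to the unconstrained $f_n^\pull g_0$ and need not be bounded at all. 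With this boundedness in hand, the chain-rule-plus-pseudoinverse computation from the proof of \autoref{quotisHsd} gives uniform Lipschitz bounds on $\psi_n$ and $\psi_n^{-1}$, and Arzel\`a-Ascoli produces a subsequence $\psi_n \to \psi$ in the compact-open topology with $\psi \in \Diff(\partial \varSigma)$ satisfying $f|_{\partial \varSigma} \circ \psi = \Curve$.

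Third---and this is what I expect to be the main obstacle---I would extend $\psi$ to a Lipschitz diffeomorphism $\Psi \in \Diff(\varSigma)$. Given such $\Psi$, the identity $(f \circ \Psi)|_{\partial \varSigma} = f|_{\partial \varSigma} \circ \psi = \Curve$ places $f \circ \Psi$ in $\Immg(\varSigma;\R^m)$, so $[f] = [f \circ \Psi] \in \strongShapeQuot(\Immg(\varSigma;\R^m))$, as desired. The extension cannot simply be lifted from the sequence $\varphi_n$ because the interior pullback metrics $(f\circ\varphi_n)^\pull g_0$ are uncontrolled, so no Arzel\`a-Ascoli argument is available on the interior. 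Instead I would fix a smooth collar $\Phi\colon \partial \varSigma \times \intervalco{0,1} \to U \subset \varSigma$ and a smooth cut-off $\chi$ with $\chi(0) = 1$ and $\chi(t) = 0$ for $t \geq \tfrac{1}{2}$, then set $\Psi = \id$ outside $U$ and
\begin{align*}
	\Psi(\Phi(x,t)) = \Phi(\beta(x,\chi(t)), t) \quad \text{inside $U$,}
\end{align*}
where $\beta\colon \partial \varSigma \times \intervalcc{0,1} \to \partial \varSigma$ is a bi-Lipschitz family connecting $\beta(\cdot, 0) = \id$ and $\beta(\cdot, 1) = \psi$. Producing such $\beta$ with uniform Lipschitz bounds is the delicate part, particularly when $\psi$ sits outside the identity component of $\Diff(\partial \varSigma)$; my approach would be an Alexander-type construction using an isometric embedding $\partial \varSigma \hookrightarrow \R^N$ with nearest-point projection onto a tubular neighborhood, together with a finite covering of $\partial\varSigma$ by sufficiently small geodesic balls so as to factor $\psi$ into a product of locally supported bi-Lipschitz homeomorphisms each close to the identity, on which an exponential-map interpolation gives uniform bi-Lipschitz control. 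A direct Jacobian computation in the collar chart then verifies $\Psi \in \Diff(\varSigma)$ and finishes the proof.
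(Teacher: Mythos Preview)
Your first two steps are essentially the paper's argument: lifting to a Cauchy sequence in $\strongImm$, restricting to the boundary, and invoking the Arzel\`a--Ascoli mechanism from \autoref{quotisHsd} to produce $\psi \in \Diff(\partial\varSigma)$ with $f|_{\partial\varSigma}\circ\psi = \Curve$ (equivalently, the paper writes $\psi = \Curve^{-1}\circ h$). The observation that only the boundary pullback metrics are controlled is correct and matches the paper's setup.

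The gap is in your extension step. Your construction requires a bi-Lipschitz isotopy $\beta$ from $\id_{\partial\varSigma}$ to $\psi$, and you set $\Psi=\id$ outside the collar. But if $\psi$ lies outside the identity component of $\Diff(\partial\varSigma)$, no such $\beta$ exists---and fragmentation into locally supported pieces does not help, since a product of maps in the identity component stays in the identity component. More fundamentally, the paper itself points out (in a footnote) that $\res\colon \Diff(\varSigma)\to\Diff(\partial\varSigma)$ need not be surjective, already for a two-dimensional cylinder; so an extension-from-scratch strategy for arbitrary $\psi$ cannot succeed without using the specific origin of $\psi$.

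The paper's key idea, which you are missing, is to exploit that $\psi_n = \varphi_n|_{\partial\varSigma}$ \emph{already} extends to $\varphi_n \in \Diff(\varSigma)$. One fixes a single large $n$ with $\psi_n$ close to $\psi$, builds a short geodesic isotopy $H_n$ from $\psi$ to $\psi_n$ via the Riemannian exponential map on $\partial\varSigma$ (this exists precisely because they are close, not because $\psi$ is near the identity), transports $H_n$ into a collar, and then glues to a copy of $\varphi_n$ pushed to the interior. Thus the extension $\varphi$ agrees with $\psi$ on $\partial\varSigma$ and with $\varphi_n$ (suitably shifted) away from the boundary. This completely sidesteps the connectivity and surjectivity obstructions that block your approach.
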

\begin{proof}
Fix a smooth Riemannian metric $g$ on $\varSigma$. 
Let $(x_n)_{n \in \N}$ be a  sequence in the set $\strongShapeQuot(\Immg(\varSigma;\R^m))$ that converges to some $x \in \Shape(\varSigma;\R^m)$. In particular, $(x_n)_{n \in \N}$ is a Cauchy sequence. Let $F_n \in \Immg(\varSigma;\R^m)$ such that $\strongShapeQuot(F_n)=x_n$.
We recursively choose $\varphi_n \in \Diff(\varSigma)$ fulfilling
$
	\dstrongImm(F_{n+1} \circ \varphi_{n+1},F_n \circ \varphi_n) \leq \dShape(x_{n+1},x_n) + 2^{-n}. 
$
This way, $(f_n \ceq F_n \circ \varphi_n)_{n\in\N}$ is also a Cauchy sequence with respect to $\dstrongImm$ and thus converges to some $f \in \strongImm(\varSigma;\R^m)$ (see \autoref{theo:Immiscomplete}).
By construction, we have 
$\strongShapeQuot(f) = x$ so that it suffices to prove the existence of a $\varphi \in \Diff(\varSigma)$ with $f \circ \varphi^{-1} \in \Immg	(\varSigma;\R^m)$.
We do this by first discussing its restriction onto the boundary.

\textbf{Claim I:} \emph{$\psi_n \ceq \res(\varphi_n)$ converges in $W^{1,\infty}$ to a $\psi \in \Diff(\varSigma)$.}\newline
We have $h_n \ceq \res(f_n) \converges{n \to \infty} h \ceq \res(f)$ in $\dstrongImm$ by \autoref{lem:strongImmhastrace}.
Observe that $h_n = \Curve \circ \psi_n$, thus $h_n$ is actually a bi-Lipschitz homeomorphism onto $\Curve(\partial \varSigma)$. 
Since $\Curve(\partial \varSigma)$ is compact and thus closed, we also have $h(\partial \varSigma) \subset \Curve(\partial\varSigma)$, so that we may define $\psi \ceq \Curve^{-1} \circ h$.
Since $\dd \Curve$ is uniformly continuous, this leads to
$\psi_n = \Curve^{-1} \circ h_n \converges{n\to \infty} \psi$ in $W^{1,\infty}_g$.
By the chain rule \eqref{eq:chainrule}, we have
$\dd_{\psi_n^{-1}(x)} h_n = \dd_x \gamma  \cdot T_{\psi_n^{-1}(x)} \psi_n$.
As $\dd_{\psi_n^{-1}(x)} h_n$ and $\dd_x \gamma$ have the same image in $\R^m$, we may compute
\begin{align*}
	T_{x}(\psi_n^{-1}) 
	=  (T_{\psi_n^{-1}(x)}\psi_n)^\dagger 
	=  \big( (\dd_x \gamma)^\dagger \cdot \dd_{\psi_n^{-1}(x)} h_n \big)^\dagger 	
	= (\dd_{\psi_n^{-1}(x)} h_n)^\dagger\cdot \dd_x \Curve
	\qquad
	\text{for $x \in \partial \varSigma$.} 
\end{align*}
Note that $\psi_n$ is a bijection, hence this shows
$\nnorm{T(\psi_n^{-1})}_{L^\infty_g} \leq \nnorm{\dd  h_n^\dagger}_{L^\infty} \, \nnorm{\dd \Curve}_{L^\infty}$.
Since $(h_n)_{n\in\N}$ is bounded in $\dstrongImm$, the sequence $(\psi_n^{-1})_{n \in \N}$ is equicontinuous. 
Thus there is a subsequence of $(\psi_n^{-1})_{n\in\N}$ that converges to $\psi^{-1} \in C^0(\partial \varSigma;\partial\varSigma)$ in the compact-open topology and $\psi^{-1}$ is Lipschitz continuous. Thus $\psi$ is a Lipschitz diffeomorphism.

\textbf{Claim II:} \emph{$\psi$ can be extended to a $\varphi \in \Diff(\varSigma)$, i.e., one has $\res(\varphi) = \psi$.}\footnote{Note that the restriction mapping $\res \colon \Diff(\varSigma) \to \Diff(\partial \varSigma)$ need not be surjective. This can already be observed when $\varSigma$ is a two-dimensional cylinder.}\newline
Let $r>0$ be the geodesic radius of $(\partial \varSigma,g)$, i.e.,
for each $x \in \partial \varSigma$ the Riemannian exponential map
$\exp_x \colon K_x \to \partial \varSigma$
with $K_x \ceq \set{u \in T_x \partial \varSigma | \abs{u}_g \leq r}$ is a smooth diffeomorphism onto its image. 
Thus, for $n \in \N$ with $\sup_{x \in \varSigma} d_g(\psi(x),\psi_n(x)) <r$, there is a unique $v_n \in W^{1,\infty}(\partial \varSigma;T\partial\varSigma)$ with $v_n(x) \in T_{\psi(x)}\varSigma$ and $\psi_n(x) = \exp( v_n(x))$ for all $x \in \partial \varSigma$. Now define
$H_n \colon \partial\varSigma \times \intervalcc{0,1} \to \partial\varSigma \times \intervalcc{0,1}$ by
$H_n(x,t) \ceq (\exp_x(t\, v_n(x)),t)$ and observe that $H_n$ is a Lipschitz diffeomorphism for sufficiently large $n$ and that $H_n(x,0) = (\psi(x),0)$ and $H_n(x,1) = (\psi_n(x),1)$ hold for all $x \in \partial \varSigma$.
We fix such an $n \in \N$ now.

In order to transport $H_n$ to $\varSigma$, we choose a smooth collar $\varPhi \colon \partial \varSigma \times \intervalco{0,2} \to \varSigma$, i.e., a smooth embedding with $\varPhi(x,0)=x$ for all $x \in \partial \varSigma$.
We define the closed sets
$A \ceq \varPhi(\varSigma \times \intervalcc{0,1})$
and
$B \ceq \varSigma\setminus \varPhi(\partial \varSigma \times \intervalco{0,1})$.
We also shoose a smooth diffeomorphism $f \colon \varSigma \to B$ 
with $f\circ \varPhi(x,0) = \varPhi(x,1)$ for all $x \in \partial \varSigma$.
Note that $f|_{\partial \varSigma} \colon \partial \varSigma \to A\cap B$ is also a smooth diffeomorphism.
The mappings $\varphi_B \ceq f \circ \varphi_n \circ f^{-1} \colon B \to B$
and $\varphi_{A\cap B} \ceq (f|_{\partial \varSigma}) \circ \psi_n \circ (f|_{\partial \varSigma})^{-1} \colon A\cap B \to A \cap B$
are Lipschitz diffeomorhisms and one has $\varphi_B|_{A\cap B} = \varphi_{A\cap B}$.
Moreover, the mapping $\varphi_A = \varPhi|_{(\varSigma \times \intervalcc{0,1})} \circ H_n \circ (\varPhi|_{(\varSigma \times \intervalcc{0,1})})^{-1}$ 
is a Lipschitz diffeomorphism of $A$ and
satisfies $\varphi_A|_{\partial \varSigma} = \psi$ and $\varphi_A|_{A \cap B} = \varphi_{A\cap B}$.
Hence there is unique continuous map $\varphi \colon \varSigma \to \varSigma$
that makes the following diagramm commutative:
\begin{equation*}
\begin{tikzcd}[row sep=scriptsize, column sep=scriptsize]
	& A\cap B 
		\ar[dl, hook'] 
		\ar[rr, hook] 
		\ar[dr, hook] 		
		\ar[dd, near end, "\varphi_{A\cap B}"'] 
	& & 
	B 
		\ar[dl, hook'] 
		\ar[dd, "\varphi_B"] 
	\\ 
	A 
		\ar[rr, crossing over, hook] 
		\ar[dd, "\varphi_A"'] 
	& & 
	\varSigma 
	\\
	& A\cap B 
		\ar[dl, hook'] 
		\ar[rr, hook] 
		\ar[dr, hook] 		
	& & 
	B 
		\ar[dl, hook'] 
	\\ 
	A 
		\ar[rr, hook] 
	& & 
	\varSigma
		\ar[from=uu, crossing over, dashed, near start, "\varphi"]	
\end{tikzcd}
\end{equation*}
The inverse $\varphi^{-1}$ exists and is also continuous. Both $\varphi$ and $\varphi^{-1}$ are piecewise of class $W^{1,\infty}$ and continuous. Thus,  we have $\varphi$, $\varphi^{-1} \in W^{1,\infty}(\varSigma;\varSigma)$, hence $\varphi \in \Diff(\varSigma)$. For $x \in \partial \varSigma \subset A$, we have $\varphi(x) = \varphi_A(x) = \varPhi( H_n(x,0))= \varPhi( \psi(x,0)) =\psi(x)$, hence $\res(\varphi) =\psi$ as desired.
\qed
\end{proof}

\subsection{Concluding Remarks}
\begin{figure}[ht]
\begin{center}
\begin{minipage}{0.32\textwidth}
\includegraphics[width=\textwidth]{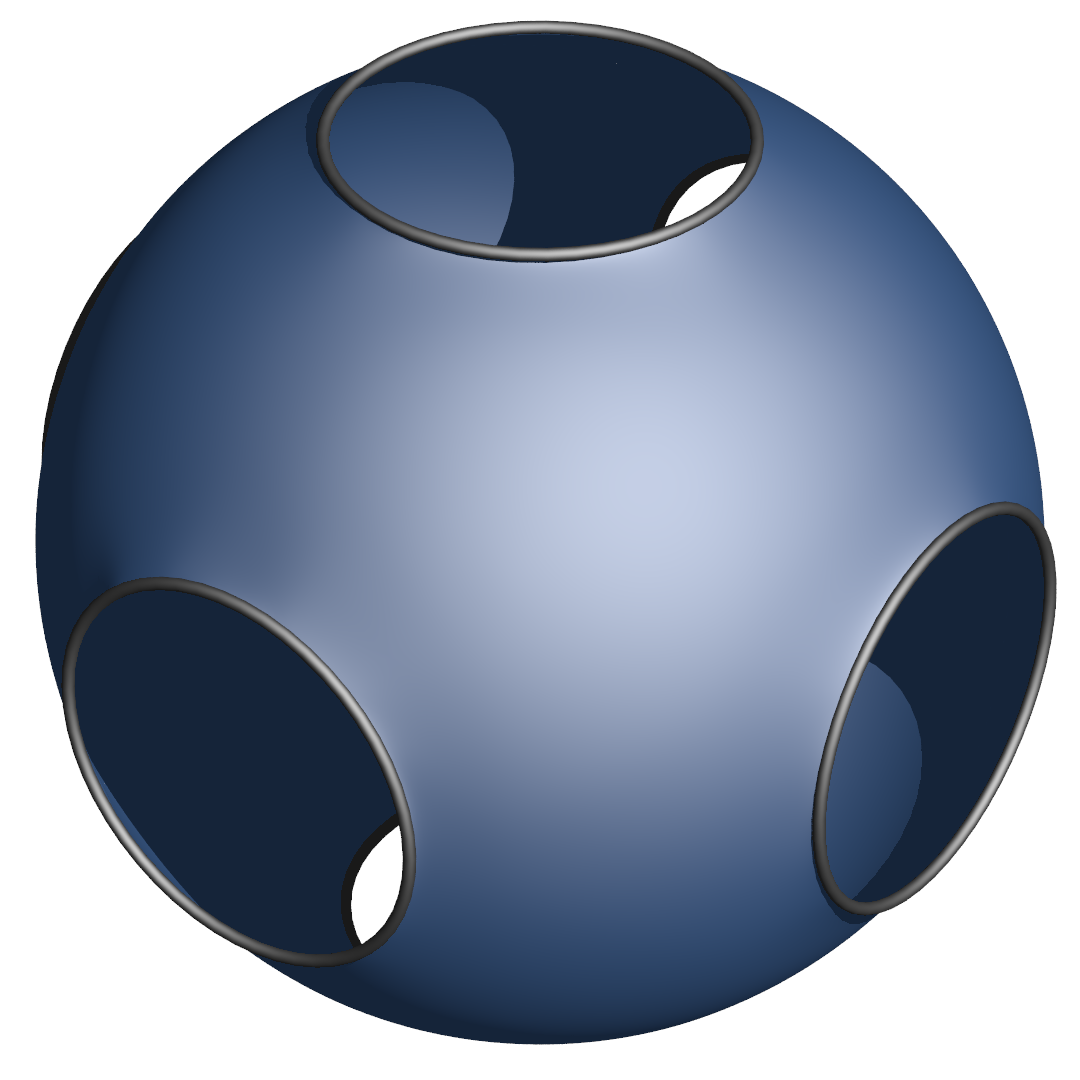}
\end{minipage}
\begin{minipage}{0.32\textwidth}
\includegraphics[width=\textwidth]{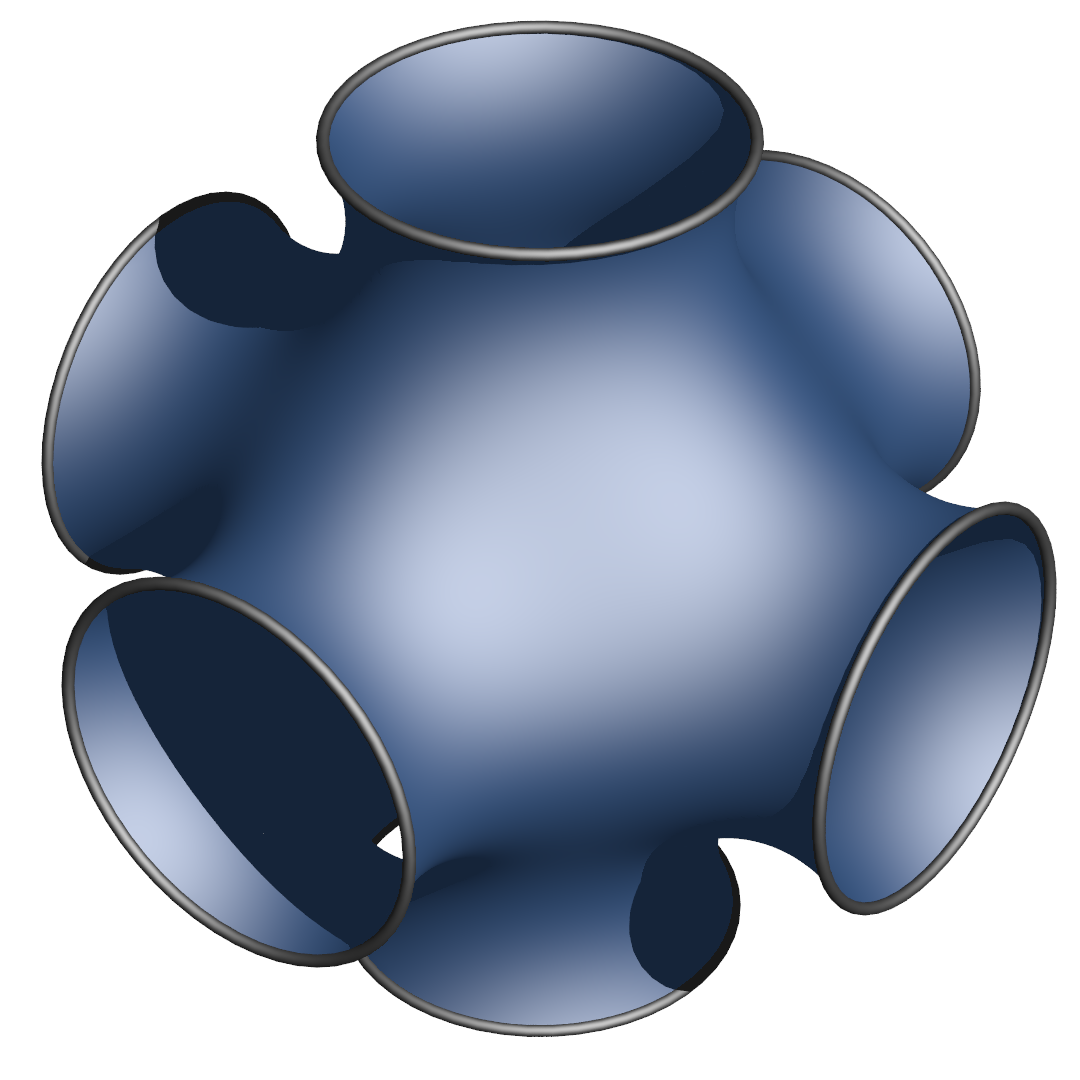}
\end{minipage}
\begin{minipage}{0.32\textwidth}
\includegraphics[width=\textwidth]{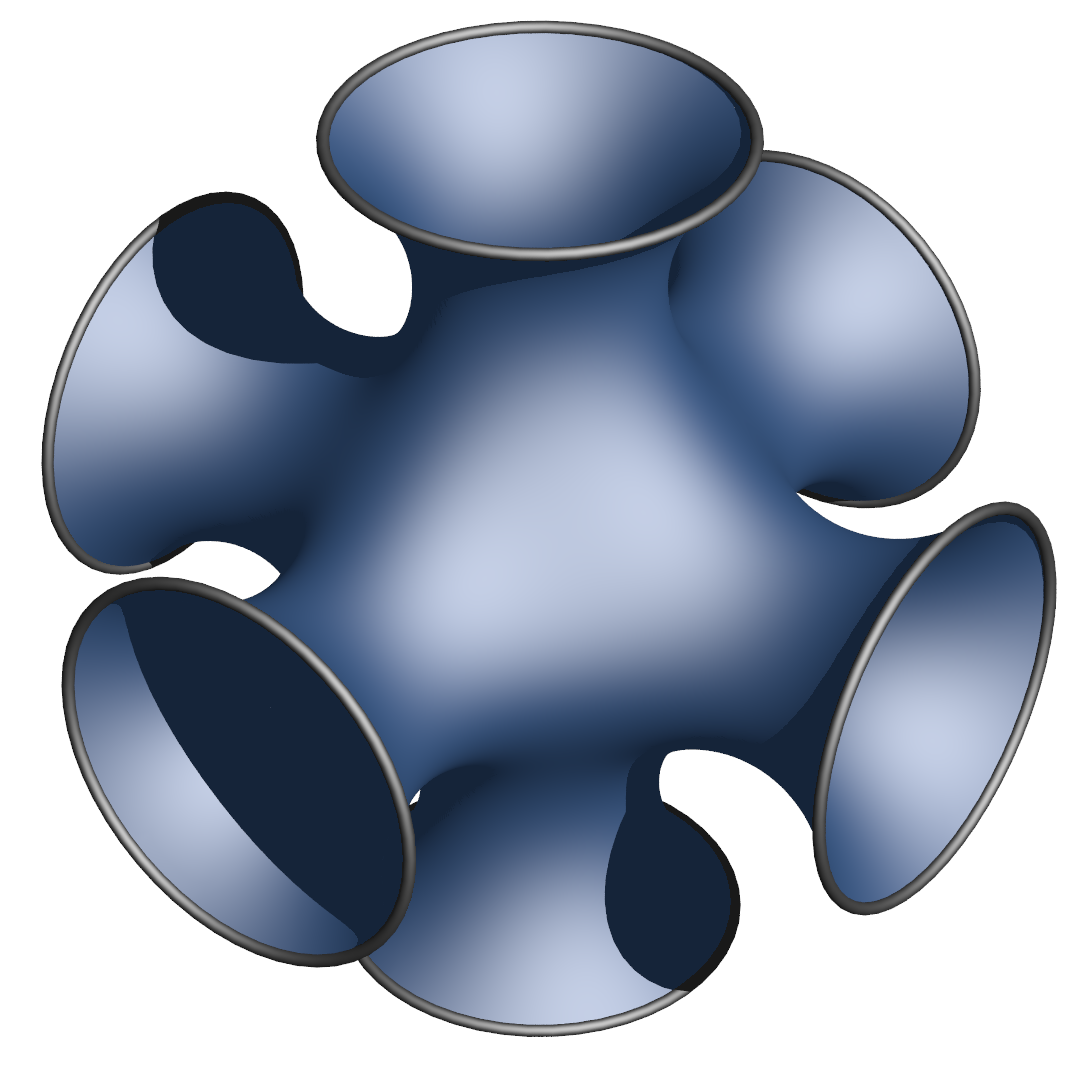}
\end{minipage}
\begin{minipage}{0.32\textwidth}
\includegraphics[width=\textwidth]{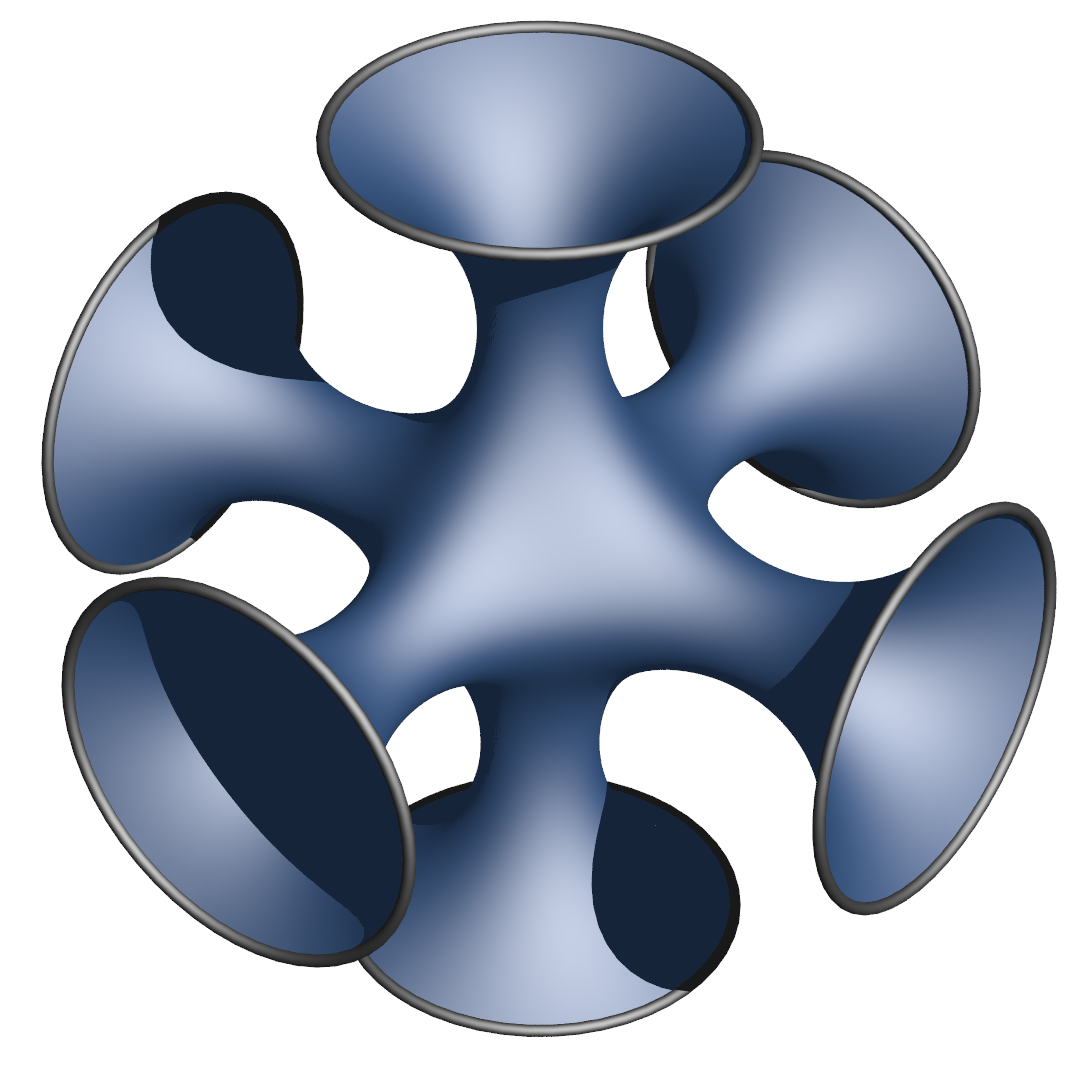}
\end{minipage}
\begin{minipage}{0.32\textwidth}
\includegraphics[width=\textwidth]{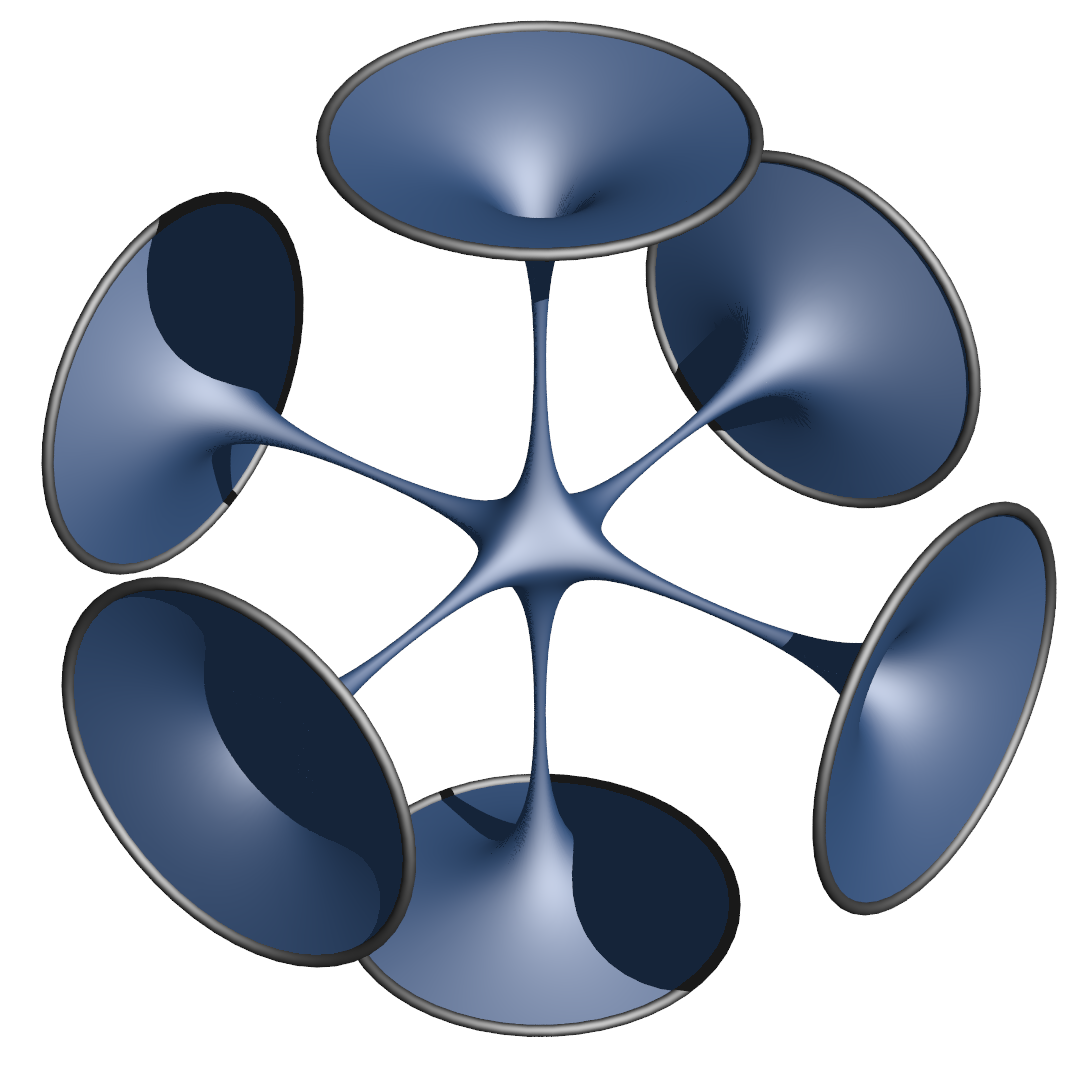}
\end{minipage}
\begin{minipage}{0.32\textwidth}
\includegraphics[width=\textwidth]{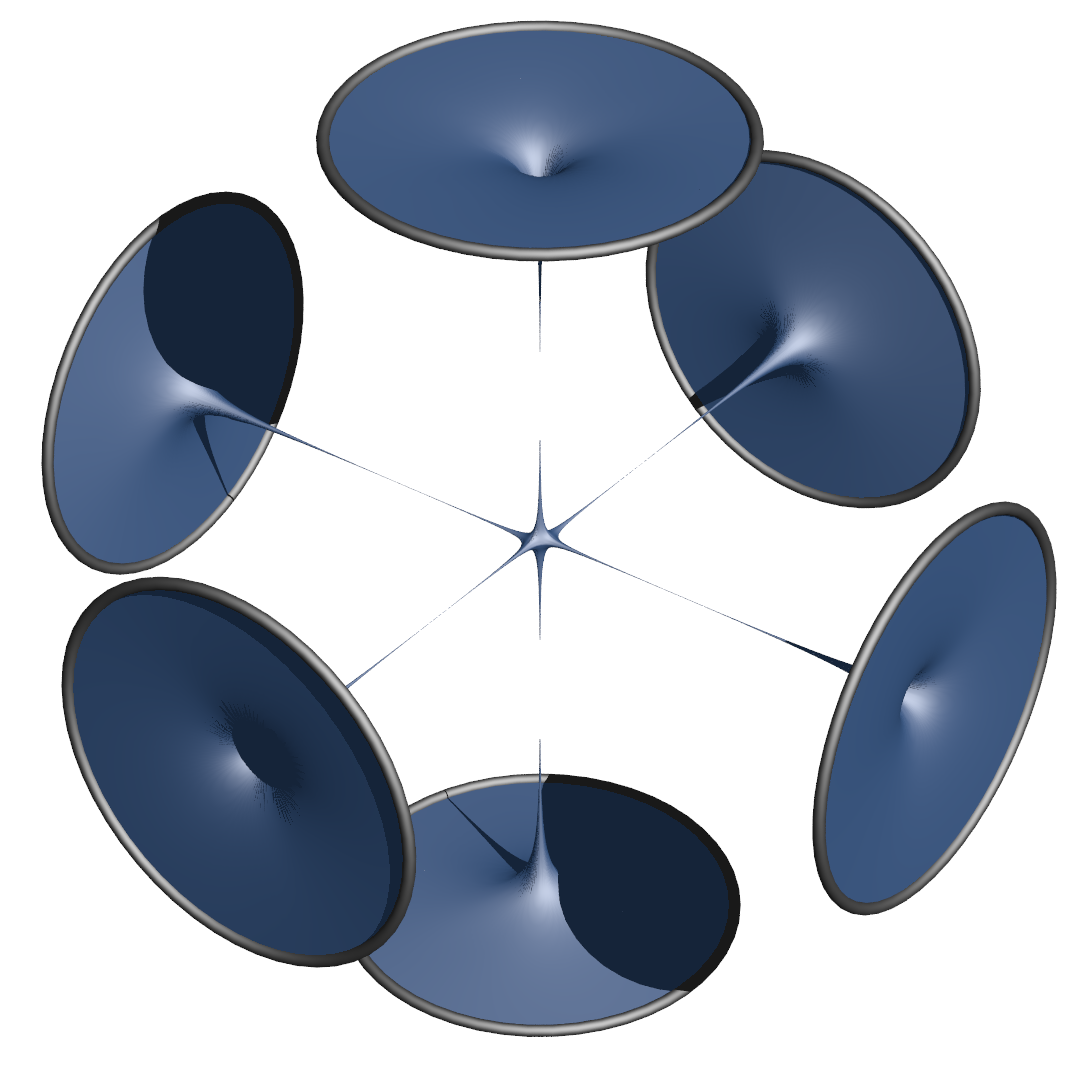}
\end{minipage}
\caption[Sequence of surfaces obtained by a descending flow.]{Sequence of surfaces obtained by a descending flow starting from a sixfold perforated sphere and degenerating into six disks.}
\label{fig:catastrophe}
\end{center}
\end{figure}

\begin{remark}\label{rem:MinSurfDiscussion1}
The validity of the discrete a~priori assumption $\cA_{\cT}^r$ amounts to the existence of a non-degenerating minimizing sequence of simplicial meshes in $\R^m$. More precisely, $\cA_{\cT}^r$ is valid if and only if for each $l \in \N$ there is $f_l \in \cC_\cT$ such that $(\treverwo_\cT(f_l))_{l \in \N}$ is bounded in $\dImm$ and $\lim_{l \to \infty} \cF_\cT(f_l) = \inf(\cF_\cT)$.
In particular, for each $\sigma \in \cT$, the affine mappings $h_l \ceq \treverwo_\cT(f_l) \circ \sigma \colon \Delta_k \to \R^m$ have to be bounded in $( \Imm(\Delta_k; \R^m), \dImm)$. This implies uniform bounds on $\nnorm{\dd h_l}$ and $\nnorm{\dd h_l^\dagger}$ for all $l \in \N$. Note that $\nnorm{\dd h_l}$ and $\nnorm{\dd h_l^\dagger}$ are descriptors for the quality of the simplex $h_l(\Delta_k)$ since $h_l$ is affine. In fact, our experiments have shown, that a degenerating minimizing sequence, such as depicted in \autoref{fig:catastrophe}, may occur. In the depicted example, this is caused by the non-existence of minimizers in the prescribed topological class.\footnote{Still, the sequence seems to converge in a weaker topology to a ``minimizer'', e.g., in the sense of integral currents.}
\end{remark}

\begin{remark}\label{rem:MinSurfDiscussion2}
It might be possible to infer convergence rates from a thorough analysis of the spectral gap of the Hessians of the area functional in the vicinity of smooth minimizers. However, this is beyond the scope of this paper. 
\end{remark}

\begin{figure}[ht]
\begin{center}
\begin{minipage}{0.49\textwidth}
\includegraphics[width=\textwidth]{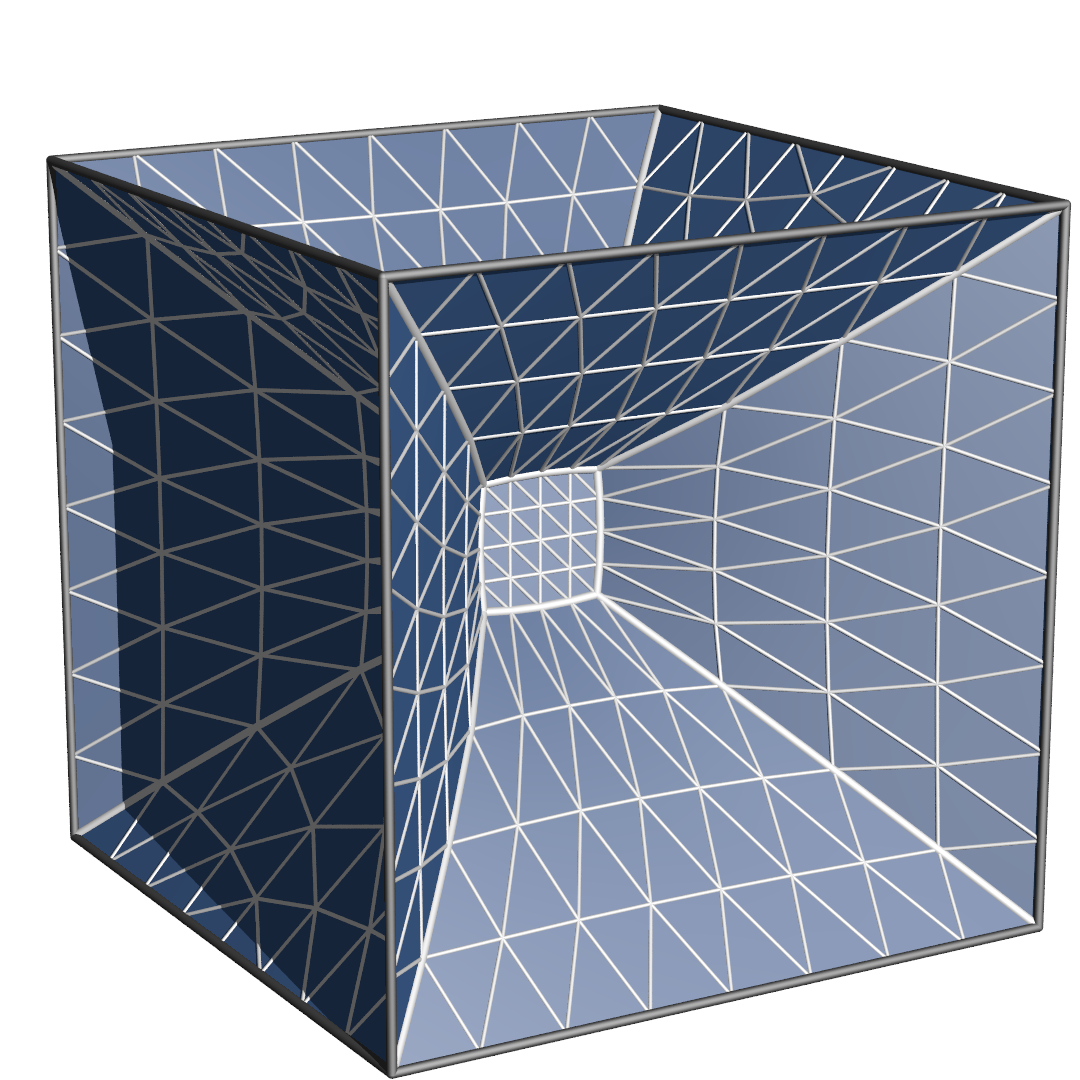}
\end{minipage}
\begin{minipage}{0.49\textwidth}
\includegraphics[width=\textwidth]{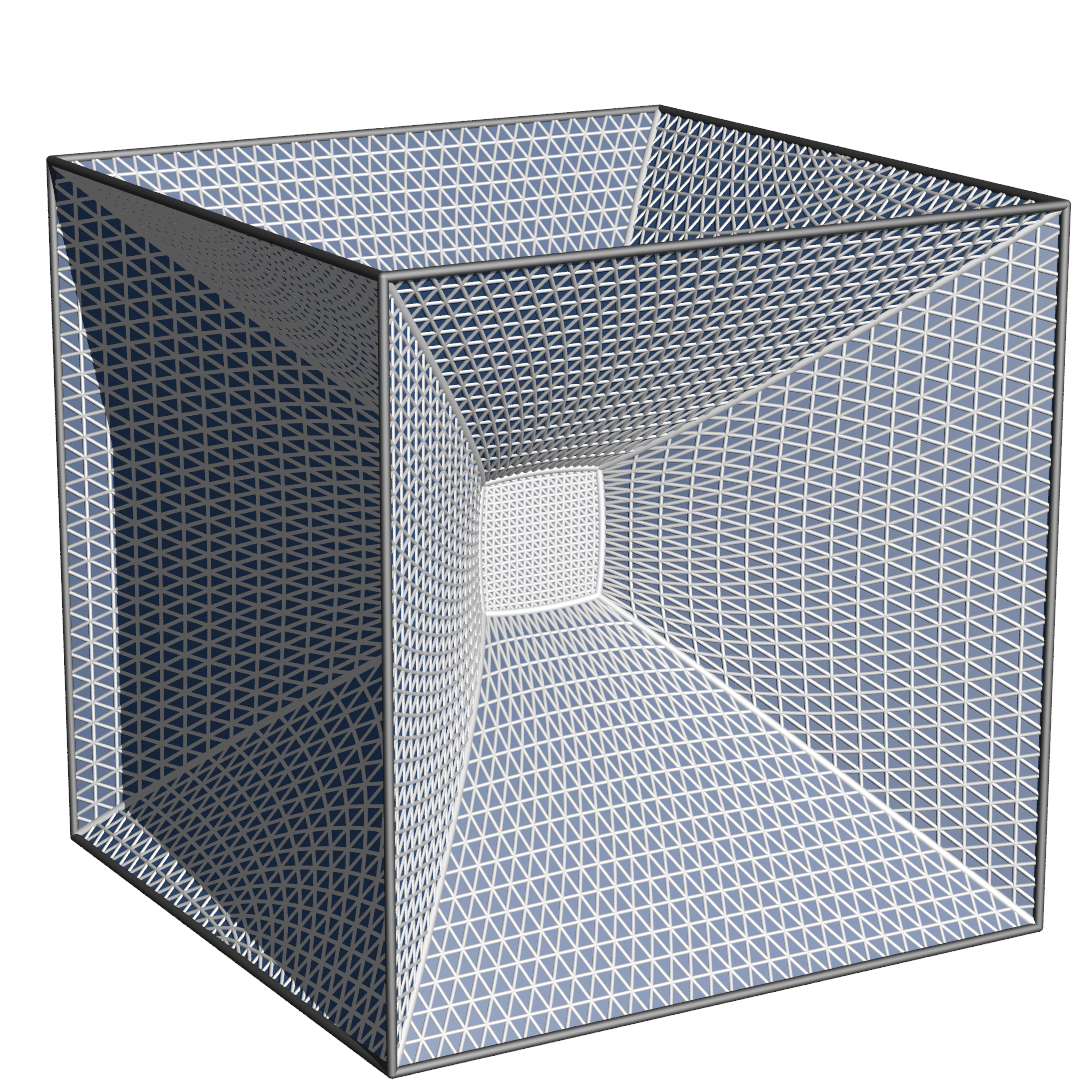}
\end{minipage}
\caption{
Discrete area minimizers of nonmanifold type spanned into the edge skeleton of a cube. 
Triple lines are indicated by thick white lines.
Observe that the aspect ratios of triangles stabilize under refinement, indicating that the discrete a~priori set $\cA_\cT^r$ is indeed valid.
}
\label{fig:Cube}
\end{center}
\end{figure}

\appendix
\newpage

\section{Thickening Robustness of Kuratowski Limits}\label{sec:Kuratowski}

For the tickenings of set as introduced in \autoref{dfn:thickenings}, we require the following result.

\begin{lemma}\label{lem:Kuratowskicinvafterthickening}
Let $(X,d)$ be a metric space, $r_n \geq 0$ with $\lim_{n\to \infty} r_n =0$ and $A_n\subset X$.
Then one has
\begin{align*}
	 \Li_{n\to \infty} \bar B(A_n, r_n)  =  \Li_{n\to \infty} A_n
	 \qand
	 \Ls_{n\to \infty} \bar B(A_n, r_n)  =  \Ls_{n\to \infty} A_n.
\end{align*}
\end{lemma}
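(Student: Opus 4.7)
The plan is to split each of the two identities into the easy ``$\supset$'' direction and the nontrivial ``$\subset$'' direction. The easy direction is free: since $A_n \subset \barB(A_n,r_n)$ for every $n$, the monotonicity of $\Li$ and $\Ls$ recorded in the paragraph preceding \autoref{solinlowerlevelsets} immediately gives $\Li_{n \to \infty} A_n \subset \Li_{n \to \infty} \barB(A_n,r_n)$ and likewise for $\Ls$.

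For the reverse inclusion for the outer limit, I would invoke the sequential characterization of $\Ls$ recalled in the text: if $x \in \Ls_{n \to \infty} \barB(A_n,r_n)$, then along some subsequence $(n_k)_{k\in\N}$ there exist $x_k \in \barB(A_{n_k},r_{n_k})$ with $x_k \to x$. Because $\dist(x_k,A_{n_k}) \leq r_{n_k}$, I can pick $a_k \in A_{n_k}$ with $d(x_k,a_k) \leq r_{n_k} + 1/k$. Since $r_{n_k} \to 0$, the triangle inequality forces $a_k \to x$, and $x$ is thus a cluster point of a sequence with $a_k \in A_{n_k}$, proving $x \in \Ls_{n \to \infty} A_n$.

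For the reverse inclusion for the inner limit, I would argue directly from the neighborhood definition. Fix $x \in \Li_{n \to \infty} \barB(A_n,r_n)$ and an arbitrary open neighborhood $U$ of $x$; choose $\epsi > 0$ so small that $B(x,2\epsi) \subset U$. Using $r_n \to 0$ together with the $\Li$-property applied to the smaller neighborhood $B(x,\epsi)$, I would find $N \in \N$ such that for all $n \geq N$ both $r_n < \epsi$ and $B(x,\epsi) \cap \barB(A_n,r_n) \neq \emptyset$. For each such $n$, pick $y_n$ in this intersection; the bound $\dist(y_n,A_n) \leq r_n < \epsi$ lets me select $a_n \in A_n$ with $d(y_n,a_n) < \epsi$, and then $d(x,a_n) < 2\epsi$ places $a_n$ inside $U$. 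This shows $x \in \Li_{n \to \infty} A_n$.

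The only subtlety worth flagging is the double-$\epsi$ bookkeeping in the inner-limit step: it is essential to shrink $U$ to $B(x,2\epsi)$ \emph{before} absorbing the two approximation errors --- the one coming from $r_n$ and the one incurred by extracting a genuine point of $A_n$ from within the closed thickening. Closedness of the $A_n$ is never needed, since both selections tolerate an arbitrarily small slack. Nothing deeper enters; once this ordering is arranged, the whole argument is elementary metric-space manipulation.
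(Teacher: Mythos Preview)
Your proof is correct and follows essentially the same approach as the paper: the easy inclusions come from monotonicity of $\Li$ and $\Ls$, and the nontrivial inclusions are obtained by a triangle-inequality argument that replaces a point of the thickening by a nearby genuine point of $A_n$. The only cosmetic difference is that for the outer limit you phrase the argument via the sequential characterization of $\Ls$ (extracting a convergent subsequence and adding a $1/k$ slack), whereas the paper works directly with the neighborhood definition using an $\varepsilon/3$ split; both are equivalent, and your $1/k$ slack even handles the edge case $r_{n_k}=0$ a touch more cleanly.
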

\begin{proof}
Monotonicity of the Kuratowski limits leads to
\begin{align*}
	\Li_{n\to \infty} A_n \subset \Li_{n\to \infty} \bar B(A_n, r_n),
	\qand
	\Ls_{n\to \infty} A_n \subset \Ls_{n\to \infty} \bar B(A_n, r_n).
\end{align*}
Fix arbitrary $a \in \Ls_{n\to \infty} \bar B(A_n, r_n)$, $\varepsilon >0$, and $N \in \N$ with $r_k < \tfrac{\varepsilon}{3}$ for all $k \geq N$. By definition of the Kuratowski limit superior, for each $n \geq N$ there is a $k \geq n$ and an $a_k \in B(a,\tfrac{\varepsilon}{3}) \cap \bar B(A_k,r_k)$. Now, choose $x_k \in A_k \cap B(a_k, 2 r_k)$ and observe $d(a,x_k) \leq d(a,a_k) + d(a_k,x_k) <\tfrac{\varepsilon}{3} + 2 r_k \leq \varepsilon$. Thus, we have $x_k \in B(a,\varepsilon) \cap A_k \neq \emptyset$ for all such $k$. This shows $a \in \Ls_{n\to \infty} A_n$. 

Analogously, one shows $\Li_{n\to \infty} A_n \supset \Li_{n\to \infty} \bar B(A_n, r_n)$. \qed
\end{proof}
\newpage

\section{Estimates for Lipschitz Immersions}

In \autoref{cor:W1inftyembLip}, we give a lower bound on the amount of $W^{1,\infty}_g$-perturbation that may be applied to a given Lipschitz immersion without leaving the space of Lipschitz immersions. Moreover, it allows us to to bound $\dImm$ locally from above by the $W^{1,\infty}_g$-distance.  
\autoref{cor:LipembW1infty} provides us with a reverse local bound.

In the following, we denote the operator norm of a linear operator $A$ by $\nnorm{A}$, while we use $\nabs{A}$ for the Frobenius norm. We will often write inner products with which these norms are defined as an index in order to resolve ambiguities.

\begin{lemma}\label{PosDefopeninSym}
Fix $b$, $g \in \PosDef(V)$ and let $X \in T_b \PosDef(V) =\Sym(V)$ with $\abs{X}_{g} < \ee^{-\dPosDef(b,g)}$. Then $b+X$ is also contained in $\PosDef(V)$ and one has
\begin{align*}
	\dPosDef(b,b + X) &\leq \ee^{\dPosDef(g,b)} \, \abs{X}_g.
\end{align*}
In particular, $\PosDef(V)$ is open in $\Sym(V)$.
\end{lemma}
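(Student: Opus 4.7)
The plan is to reduce the lemma to matrix estimates by fixing a basis $e$ of $V$ that is orthonormal for $g$, so that $\Gram_e(g)=I$ and $\abs{X}_g$ equals the Frobenius norm of $\hat X \ceq \Gram_e(X)$. Writing $\mathbf{B} \ceq \Gram_e(b)$, formula \eqref{cor:geodesicdistanceofPosDef} with $\mathbf{L}=I$ reads $\dPosDef(g,b) = \bigabs{\log \mathbf{B}} = \bigparen{\sum_{i=1}^k \log(\lambda_i)^2}^{1/2}$, where $\lambda_1,\dotsc,\lambda_k$ are the eigenvalues of $\mathbf{B}$. The key consequence is $\lambda_{\min}(\mathbf{B}) \geq \ee^{-\dPosDef(g,b)}$; this single input drives the rest of the argument.

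From there, the membership claim $b+X \in \PosDef(V)$ is immediate from Weyl's inequality together with the crude bound $\norm{\hat X} \leq \abs{\hat X}$ of the operator norm by the Frobenius norm:
\begin{align*}
\lambda_{\min}(\mathbf{B}+\hat X) \geq \lambda_{\min}(\mathbf{B}) - \norm{\hat X} \geq \ee^{-\dPosDef(g,b)} - \abs{X}_g > 0.
\end{align*}
Openness of $\PosDef(V) \subset \Sym(V)$ follows at once, since the hypothesis on $\abs{X}_g$ defines a whole neighborhood of $0 \in \Sym(V)$.

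For the quantitative distance estimate I would reapply \eqref{cor:geodesicdistanceofPosDef}, this time with $\mathbf{L}=\mathbf{B}^{1/2}$, to write
\begin{align*}
\dPosDef(b,b+X) = \bigabs{\log(I + Y)}, \qquad Y \ceq \mathbf{B}^{-1/2}\hat X \mathbf{B}^{-1/2}.
\end{align*}
Submultiplicativity of the Frobenius norm delivers $\abs{Y} \leq \norm{\mathbf{B}^{-1}}\,\abs{\hat X} \leq \ee^{\dPosDef(g,b)} \abs{X}_g < 1$, so in particular $\norm{Y} \leq \abs{Y} < 1$. The eigenvalues $y_i$ of the symmetric matrix $Y$ therefore all lie in $\nintervaloo{-1,1}$, and the spectral calculus yields $\abs{\log(I+Y)}^2 = \sum_i \log(1+y_i)^2$.

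The hard part is converting this spectral expression into the linear estimate claimed by the lemma. The naive inequality $\abs{\log(1+y)} \leq \abs{y}$ holds only for $y \geq 0$; for $y \in \nintervaloo{-1,0}$ one obtains instead $\abs{\log(1+y)} \leq \abs{y}/(1-\abs{y})$ from the integral representation $\log(1+y) = \int_0^1 y/(1+ty)\,\dd t$. Applied eigenvalue-wise, this yields $\dPosDef(b,b+X) \leq \abs{Y}/(1-\norm{Y}) \leq \ee^{\dPosDef(g,b)}\abs{X}_g /\bigparen{1-\ee^{\dPosDef(g,b)}\abs{X}_g}$; the weaker linear-with-correction version already suffices for the openness conclusion and for all later applications, while the exact form stated in the lemma is recovered by sharpening the spectral step or by bounding $\dPosDef(b,b+X)$ by the length of the straight-line path $t \mapsto b+tX$, whose metric velocity $\abs{X}_{b+tX}$ I would control via the uniform eigenvalue estimate $\lambda_{\min}(\mathbf{B}+t\hat X) \geq \ee^{-\dPosDef(g,b)} - t\abs{X}_g$ for $t\in\nintervalcc{0,1}$.
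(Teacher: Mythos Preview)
Your route is essentially the paper's, step for step: the same $g$-orthonormal basis, the same bound $\nnorm{\mathbf{B}^{-1}} = \lambda_{\min}(\mathbf{B})^{-1} \leq \ee^{\dPosDef(g,b)}$, the same rewriting $\dPosDef(b,b+X) = \bigabs{\log(I+Y)}$ with $Y = \mathbf{B}^{-1/2}\hat X\,\mathbf{B}^{-1/2}$, and the same Frobenius bound $\nabs{Y} \leq \ee^{\dPosDef(g,b)}\abs{X}_g < 1$. The only divergence is at the step you flag: the paper simply writes $\bigabs{\log(I+Y)} \leq \nabs{Y}$ and proceeds, whereas you correctly observe that this inequality fails whenever $Y$ has a negative eigenvalue (already for the scalar $y=-\tfrac12$ one has $\log 2 > \tfrac12$). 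So you have not missed a trick---the paper's argument has a genuine gap at precisely this point, and the clean constant $\ee^{\dPosDef(g,b)}$ is not actually established by the written proof.

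Your two proposed fixes are sound but both yield the same slightly weaker estimate: the eigenvalue-wise bound gives $\dPosDef(b,b+X) \leq \ee^{\dPosDef(g,b)}\abs{X}_g\big/\bigparen{1-\ee^{\dPosDef(g,b)}\abs{X}_g}$, and the path-length approach, once you integrate $\abs{X}_g\big/\bigparen{\ee^{-\dPosDef(g,b)}-t\,\abs{X}_g}$ over $t\in\intervalcc{0,1}$, gives $-\log\bigparen{1-\ee^{\dPosDef(g,b)}\abs{X}_g}$, which is comparable. Either version is entirely adequate for every downstream use in the paper (openness, \autoref{cor:W1inftyembLip}, the consistency and proximity estimates), since only a local Lipschitz bound is ever needed.
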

\begin{proof}
Choose a $g$-orthonormal basis $e$ of $V$ and define $\mathbf{B} \ceq \Gram_e(b)$, $\mathbf{X} \ceq \Gram_e(X)$.
Let $0<\lambda_1 \leq \dotsm \leq \lambda_k$ be the eigenvalues of $\mathbf{B}$. Observe that
\begin{align*}
	\nnorm{\mathbf{B}^{-1}} 
	&= \lambda_1^{-1}
	\leq \exp (\nabs{\log(\lambda_1)})
	\leq \exp (\nabs{(\log(\lambda_1),\dotsc, \log(\lambda_k))})
	= \ee^{\dPosDef(b,g)}.
\end{align*}
The estimate
\begin{align*}
	\nnorm{\mathbf{B}^{-\frac{1}{2}}\mathbf{X}\mathbf{B}^{-\frac{1}{2}}} 
	\leq \nnorm{\mathbf{B}^{-1}} \, \nnorm{\mathbf{X}}
	\leq \nnorm{\mathbf{B}^{-1}} \, \nabs{\mathbf{X}}
	\leq \ee^{\dPosDef(b,g)} \cdot \abs{X}_g<1
\end{align*}
shows that 
$
\mathbf{B} + t \, \mathbf{X} =  \mathbf{B}^{\frac{1}{2}}\nparen{\mathbf{I}_{n} + t\, \mathbf{B}^{-\frac{1}{2}}\mathbf{X}\mathbf{B}^{-\frac{1}{2}}}\mathbf{B}^{\frac{1}{2}}
$
is invertible for all $t \in \intervalcc{0,1}$. This implies that $\mathbf{B} + \mathbf{X}$ and thus $b + X$ are positive definite.
Now, we have
\begin{align*}
	\dPosDef(b,b+X)
	&= \nabs{\log \nparen{\mathbf{B}^{-\frac{1}{2}}(  \mathbf{B} + \mathbf{X})\mathbf{B}^{-\frac{1}{2}} } }
	\leq 
	\nabs{\log \nparen{  \mathbf{I}_k + \mathbf{B}^{-\frac{1}{2}}\mathbf{X}\mathbf{B}^{-\frac{1}{2}} } }
	\\
	&\leq 
	\nabs{\mathbf{B}^{-\frac{1}{2}}\mathbf{X}\mathbf{B}^{-\frac{1}{2}}  }
	=\nabs{\mathbf{B}^{-1}\mathbf{X}}
	\leq 
	\nabs{\mathbf{B}^{-1}} \, \nabs{\mathbf{X}}
	\leq \ee^{\dPosDef(g,b)} \, \abs{X}_g,
\end{align*}
from which the stated estimate follows.
\qed
\end{proof}

\begin{lemma}\label{lem:HomembInj1}
Let $(V_i,g_i)$ be finite-dimensional Euclidean spaces for $i \in \{1,2\}$ and let $A \in \Inj(V_1;V_2)$ be injective and let $B \in \Hom(V_1;V_2)$ with 
$\abs{A-B}_{g_1,g_2} < 	\frac{1}{3} \exp(-\frac{3}{2} \ell)$,
where $\ell \ceq \dPosDef(g_1,A^\pull g_2)$.
Then also $B$ is injective and one has the estimate
\begin{align*}
	\dPosDef \nparen{A^\pull g_2,B^\pull g_2} 
	\leq 
	\tfrac{7}{3} \, \ee^{\frac{3}{2}\ell} \abs{A-B}_{g_1,g_2}.
\end{align*}
\end{lemma}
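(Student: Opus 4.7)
The plan is to reduce the statement to the openness lemma \autoref{PosDefopeninSym}, applied to the Riemannian metric $b \ceq A^\pull g_2$ on $V_1$, with reference inner product $g \ceq g_1$, and perturbation $X \ceq B^\pull g_2 - A^\pull g_2 \in \Sym(V_1)$. Note that by definition $\dPosDef(g_1, b) = \ell$, so the hypothesis of that lemma requires $|X|_{g_1} < \ee^{-\ell}$. Abbreviate $E \ceq B - A$ and $\epsi \ceq |E|_{g_1, g_2} < \tfrac{1}{3}\ee^{-3\ell/2}$.

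First I would expand $X$ via bilinearity: for $u,v \in V_1$,
\begin{align*}
	X(u,v) = g_2(Au, Ev) + g_2(Eu, Av) + g_2(Eu, Ev).
\end{align*}
Picking a $g_1$-orthonormal basis $e$ of $V_1$ and a $g_2$-orthonormal basis of $V_2$, and writing $\mathbf{A}$, $\mathbf{E}$ for the matrix representations, this gives $\Gram_e(X) = \mathbf{A}^\transp\mathbf{E} + \mathbf{E}^\transp\mathbf{A} + \mathbf{E}^\transp\mathbf{E}$. Using the submultiplicativity $|\mathbf{M}\mathbf{N}|_{\on{Frob}} \leq \|\mathbf{M}\|\,|\mathbf{N}|_{\on{Frob}}$ (operator norm times Frobenius) I get
\begin{align*}
	|X|_{g_1} \leq 2\,\|A\|_{g_1,g_2}\,\epsi + \epsi^2.
\end{align*}

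Next I would bound the operator norm $\|A\|_{g_1, g_2}$ in terms of $\ell$. In the chosen bases, $\Gram_e(A^\pull g_2) = \mathbf{A}^\transp\mathbf{A}$, so its eigenvalues $\lambda_1,\dotsc,\lambda_k$ are the squared singular values of $A$. By \eqref{cor:geodesicdistanceofPosDef}, $\ell^2 = \sum_i \log(\lambda_i)^2$, hence $|\log(\lambda_i)| \leq \ell$ for each $i$, and therefore
\begin{align*}
	\|A\|_{g_1,g_2}^2 = \max_i \lambda_i \leq \ee^{\ell},
	\quad \text{so} \quad
	\|A\|_{g_1,g_2} \leq \ee^{\ell/2}.
\end{align*}
Combining with the hypothesis $\epsi < \tfrac{1}{3}\ee^{-3\ell/2}$ yields $\epsi^2 \leq \tfrac{1}{3}\ee^{-3\ell/2}\epsi$ and hence
\begin{align*}
	|X|_{g_1} \leq \bigparen{2\ee^{\ell/2} + \tfrac{1}{3}\ee^{-3\ell/2}}\epsi
	< \bigparen{2\ee^{\ell/2} + \tfrac{1}{3}\ee^{-3\ell/2}}\cdot\tfrac{1}{3}\ee^{-3\ell/2}
	= \tfrac{2}{3}\ee^{-\ell} + \tfrac{1}{9}\ee^{-3\ell} \leq \tfrac{7}{9}\ee^{-\ell} < \ee^{-\ell}.
\end{align*}

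Now \autoref{PosDefopeninSym} applies: $B^\pull g_2 = b + X \in \PosDef(V_1)$, which forces $B$ to be injective, and furthermore
\begin{align*}
	\dPosDef(A^\pull g_2, B^\pull g_2) \leq \ee^{\ell}\,|X|_{g_1} \leq \ee^{\ell}\bigparen{2\ee^{\ell/2}\epsi + \epsi^2}
	\leq \bigparen{2\ee^{3\ell/2} + \tfrac{1}{3}\ee^{-\ell/2}}\epsi
	\leq \tfrac{7}{3}\ee^{3\ell/2}\epsi,
\end{align*}
where the last step uses $\ee^{-\ell/2} \leq \ee^{3\ell/2}$ (valid since $\ell \geq 0$). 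This is exactly the claimed estimate. The main nuisance — rather than a real obstacle — is the bookkeeping of constants so that the final factor comes out to $\tfrac{7}{3}$; the only genuine structural step is the bound $\|A\|_{g_1,g_2} \leq \ee^{\ell/2}$, which makes essential use of the explicit form of $\dPosDef$ via eigenvalues given in \eqref{cor:geodesicdistanceofPosDef}.
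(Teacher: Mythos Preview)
Your proof is correct and follows essentially the same approach as the paper: reduce to \autoref{PosDefopeninSym} with $b = A^\pull g_2$, $g = g_1$, $X = B^\pull g_2 - A^\pull g_2$, expand $X$ as $\mathbf{A}^\transp\mathbf{E} + \mathbf{E}^\transp\mathbf{A} + \mathbf{E}^\transp\mathbf{E}$, bound $\|A\|_{g_1,g_2} \leq \ee^{\ell/2}$ via the eigenvalue description of $\dPosDef$, and track the constants to obtain $\tfrac{7}{3}$. Your write-up is slightly more explicit about why $\|A\|_{g_1,g_2} \leq \ee^{\ell/2}$, but the argument is the same.
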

\begin{proof}
We use the preceding lemma with $g=g_1$, $b\ceq A^\pull g_2$, and  $X\ceq B^\pull g_2 - A^\pull g_2$. 
Choose $g_i$-orthonormal bases $e_i$ of $V_i$ for $i \in \{1,2\}$ and write $\mathbf{X} \ceq \Gram_{e_1}(X)$. Let $\mathbf{A}$ and $\mathbf{B}$ be the matrix representations of $A$ and $B$, respectively, with respect to these chosen bases.
Since
$\mathbf{X} 	= \mathbf{A}^\transp (\mathbf{B}-\mathbf{A}) + (\mathbf{B}-\mathbf{A})^\transp \mathbf{A} + (\mathbf{B}-\mathbf{A})^\transp\mathbf(\mathbf{B}-\mathbf{A})$ 
one obtains
\begin{align*}
	\abs{X}_g = \nabs{\mathbf{X}}
	&\leq 2 \nnorm{\mathbf{A}^\transp\mathbf{A}}^\frac{1}{2} \nabs{\mathbf{B}-\mathbf{A}}+ \nabs{\mathbf{B}-\mathbf{A}}^2
	\leq 2 \, \ee^{\frac{1}{2}\ell} \cdot \abs{A-B}_{g_1,g_2} + \abs{A-B}_{g_1,g_2}^2
	\\
	&< \nparen{2 \, \ee^{\frac{1}{2}\ell} + \tfrac{1}{3} \ee^{-\frac{3}{2} \ell}} \cdot \abs{A-B}_{g_1,g_2}
	<  \ee^{-\ell},
\end{align*}
whenever $\abs{A-B}_{g_1,g_2} < \frac{1}{3} \ee^{-\frac{3}{2} \ell}$. 
Finally, one has
\begin{align*}
	\dPosDef(b,b+X) \leq
	\ee^{\ell} \, \abs{X}_g
	\leq
	\nparen{2 \, \ee^{\frac{3}{2}\ell} + \tfrac{1}{3} \ee^{-\frac{1}{2} \ell}} \cdot \abs{A-B}_{g_1,g_2}
	\leq 
	\tfrac{7}{3} \, \ee^{\frac{3}{2}\ell} \abs{A-B}_{g_1,g_2}
\end{align*}
by \autoref{PosDefopeninSym}. 
\qed
\end{proof}

\begin{lemma}\label{lem:HomembInj2}
Let $(V_i,g_i)$ be finite-dimensional Euclidean spaces for $i \in \{1,2\}$, let $\dRay$ be defined as in \autoref{ex:dRay}, and let $A \in \Inj(V_1;V_2)$ be injective and let $B \in \Hom(V_1;V_2)$ with 
$\abs{A-B}_{g_1,g_2} < 	\frac{1}{2} \nnorm{A^{\dagger_{g_1,g_2}}}_{g_2,g_1}^{-1}$.
Then also $B$ is injective and one has the estimate
\begin{align*}
	\dinfty( \Ray(A),\Ray(B))  \leq \uppi \, \sqrt{24} \, \nnorm{A^{\dagger_{g_1,g_2}}}_{g_2,g_1} \, \nabs{A-B}_{g_1,g_2}.
\end{align*}
\end{lemma}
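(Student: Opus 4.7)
The plan is to prove injectivity of $B$ first by a reverse-triangle-inequality estimate on $|Bu|_{g_2}$, and then to bound the angle $\measuredangle(Au,Bu)$ by extracting the component of $(B-A)u$ perpendicular to $Au$, which directly controls the sine of that angle.

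For injectivity, I would use that $|Au|_{g_2} \geq \nnorm{A^{\dagger_{g_1,g_2}}}_{g_2,g_1}^{-1} |u|_{g_1}$ (the reciprocal of $\nnorm{A^\dagger}$ is the smallest singular value of the injection $A$), together with $\nnorm{A-B}_{\on{op}} \leq \nabs{A-B}_{g_1,g_2}$ and the hypothesis, to obtain
\[
|Bu|_{g_2} \geq \bigparen{\nnorm{A^{\dagger_{g_1,g_2}}}_{g_2,g_1}^{-1} - \nabs{A-B}_{g_1,g_2}} |u|_{g_1} > \tfrac{1}{2}\nnorm{A^{\dagger_{g_1,g_2}}}_{g_2,g_1}^{-1}|u|_{g_1} > 0
\]
for every $u \in V_1\setminus\{0\}$, whence $B$ is injective.

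For the angle bound, fix $u \in V_1\setminus\{0\}$ and let $\theta = \measuredangle(Au, Bu)$ and $e \ceq (B-A)u$. Decompose $e = e_\parallel + e_\perp$ orthogonally with $e_\parallel$ parallel to $Au$, so that $Bu = (Au + e_\parallel) + e_\perp$ is an orthogonal sum in which $Au+e_\parallel$ points in the same direction as $Au$ (the parallel perturbation has norm at most $|e|_{g_2} < |Au|_{g_2}$). Hence $\sin\theta = |e_\perp|_{g_2}/|Bu|_{g_2} \leq |e|_{g_2}/|Bu|_{g_2}$. Using $|e|_{g_2} \leq \nabs{A-B}_{g_1,g_2}|u|_{g_1}$ together with the lower bound from the injectivity step gives $\sin\theta \leq 2\,\nnorm{A^{\dagger_{g_1,g_2}}}_{g_2,g_1}\nabs{A-B}_{g_1,g_2} < 1$. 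Since $\arcsin(x) \leq \tfrac{\uppi}{2}x$ on $[0,1]$, one obtains $\theta \leq \uppi\,\nnorm{A^{\dagger_{g_1,g_2}}}_{g_2,g_1}\nabs{A-B}_{g_1,g_2}$, which holds uniformly in $u$ and implies the claim (with the sharper constant $\uppi$ in place of $\uppi\sqrt{24}$).

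The key technical move is the orthogonal decomposition of the perturbation $(B-A)u$ relative to $Au$: this is what converts a linear perturbation estimate into a sine estimate without introducing square roots, and it is tightly coupled to the injectivity lower bound (the same factor $\tfrac{1}{2}$ appears in both). No major obstacles arise; the generous constant $\sqrt{24}$ in the statement leaves considerable slack.
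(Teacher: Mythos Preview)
Your proof is correct and takes a genuinely different route from the paper's. The paper defines the cosine-of-angle function
\[
F_u(U) \ceq \frac{g_2(A\,u,(A+U)\,u)}{\nabs{A\,u}_{g_2}\,\nabs{(A+U)\,u}_{g_2}},
\]
uses the crude bound $\arccos(t)\leq \uppi\sqrt{2-2t}$, and then controls $2-2F_u(U)$ by a second-order Taylor expansion about $U=0$: one computes $F_u(0)=1$, $DF_u(0)=0$, and bounds $\nabs{D^2F_u(X)(U,U)}\leq 24\,\nnorm{A^\dagger}^2\nabs{U}^2$, which is where the constant $\sqrt{24}$ originates.

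Your argument is more elementary and geometrically transparent: the orthogonal decomposition of $(B-A)u$ relative to $Au$ gives $\sin\theta$ directly, and the same reverse-triangle bound that shows injectivity feeds the denominator. This avoids the Hessian computation entirely and yields the sharper constant $\uppi$ in place of $\uppi\sqrt{24}$. The one step worth making explicit is that $\theta\in[0,\tfrac{\uppi}{2})$ (needed so that $\theta=\arcsin(\sin\theta)$); this follows from your observation that $Au+e_\parallel$ is a \emph{positive} multiple of $Au$, whence $\cos\theta>0$. The paper's approach, while less sharp here, has the mild advantage of being mechanical and easily adaptable to other smooth angle-type functionals.
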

\begin{proof}
For $u \in V_1'\setminus \{0\}$ and $U \in \Hom(V_1;V_2)$ with $\abs{U} < \frac{1}{2}\nnorm{A^{\dagger}}^{-1}$ define the function
\begin{align*}
	F_u(U) \ceq \frac{\ninnerprod{A \,u , (A+U)\,u}}{\abs{A \,u } \abs{(A+U)\,u}}.
\end{align*}
This way, one has
\begin{align*}
	\dinfty( \Ray(A),\Ray(A+U)) 
	= \sup_{u \in V_1'\setminus \{0\}} \arccos (F_u(U))
	&\leq 	\uppi  \sup_{u \in V_1'\setminus \{0\}} \sqrt{2-2 F_u(U)}	.
\end{align*}
We use a first order Taylor expansion of $F_u$ in order to bound $\sqrt{2-2 F_u(U)}$ from above.
A direct computation shows that
\begin{align*}
	F_u(0)=1, \quad DF_u(0)=0, \qand
	\nabs{D^2 F_u(U)\, (V,V)}\leq 24 \, \nabs{V}^2  \nnorm{A^\dagger}^2
\end{align*}
hold for all $U$, $V \in \Hom(V_1;V_2)$ with $\nabs{U} < \frac{1}{2} \nnorm{A^\dagger}$.
Taylor's theorem implies the existence of some $X \in \Hom(V_1;V_2)$ with $\abs{X} \leq \abs{U}$ such that
\begin{align*}
	F_u(U) 
	= F_u(0) + DF_u(0) \, U + \frac{1}{2} D^2 F_u(X)\,(U,U)
	= 1 + \frac{1}{2} D^2 F_u(X)\,(U,U).
\end{align*}
Thus, we obtain the bound
\begin{align*}
	\arccos(F_u(U)) 
	\leq  \uppi \sqrt{2-2 F_u(U)}
	&= \uppi \sqrt{-D^2 F_u(X) \, (U,U)}
	\leq  \uppi \, \sqrt{24} \,\nnorm{A^\dagger}\, \abs{U}.
\end{align*}
\qed
\end{proof}

\begin{lemma}\label{cor:W1inftyembLip}
Let $f \in \Imm(\varSigma;\R^m)$, $g \in \PosDefSec(\varSigma)$, and $h \in W^{1,\infty}(\varSigma;\R^m)$ with 
$
	\nnorm{f-h}_{W^{1,\infty}_g} <\frac{1}{3} \exp(-\tfrac{3}{2} \ell),
$
where $\ell\ceq \dPosDefSec(g,f^\pull g_0)$.
Then also $h$ is a Lipschitz immersion and one has
\begin{align*}
	\dImm(f,h) \leq 18 \, \exp(\tfrac{3}{2} \ell) \, \nnorm{f-h}_{W^{1,\infty}_g}.
\end{align*}
In particular, $\Imm(\varSigma;\R^m)$ is an open subset in $W^{1,\infty}_g(\varSigma;\R^m)$. and
\end{lemma}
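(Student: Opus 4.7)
My plan is to reduce everything to the pointwise estimates of \autoref{lem:HomembInj1} and \autoref{lem:HomembInj2}, applied at almost every point $x \in \varSigma$ with $A \ceq \dd f\at_x$ and $B \ceq \dd h\at_x$. At any such $x$, set $\ell_x \ceq \dPosDef(g\at_x, f^\pull g_0\at_x) \leq \ell$, and observe $\abs{A-B}_{g,g_0} \leq \norm{\dd f - \dd h}_{L^\infty_g} \leq \norm{f-h}_{W^{1,\infty}_g} < \tfrac{1}{3}\ee^{-\frac{3}{2}\ell} \leq \tfrac{1}{3}\ee^{-\frac{3}{2}\ell_x}$. Thus \autoref{lem:HomembInj1} applies and yields both injectivity of $B$ (hence $h^\pull g_0\at_x$ is positive definite a.e.) and the estimate $\dPosDef(A^\pull g_0, B^\pull g_0) \leq \tfrac{7}{3}\ee^{\frac{3}{2}\ell}\norm{f-h}_{W^{1,\infty}_g}$. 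Taking essential suprema controls $\dPosDefSec(f^\pull g_0, h^\pull g_0)$, and combined with the triangle inequality against $g$, it also shows $\dPosDefSec(g, h^\pull g_0) < \infty$, so $h \in \Imm(\varSigma;\R^m)$.

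For the Grassmannian part, I would use the eigenvalue formula \eqref{cor:geodesicdistanceofPosDef}: if $\lambda_1$ denotes the smallest eigenvalue of $\Gram_e(A^\pull g_0)$ relative to $\Gram_e(g)$, then $\abs{\log \lambda_1} \leq \ell_x$, so $\nnorm{A^{\dagger_{g,g_0}}} = \lambda_1^{-1/2} \leq \ee^{\ell_x/2} \leq \ee^{\ell/2}$. The elementary inequality $\tfrac{1}{3}\ee^{-\frac{3}{2}\ell} \leq \tfrac{1}{2}\ee^{-\ell/2}$, which holds for $\ell \geq 0$, verifies the hypothesis of \autoref{lem:HomembInj2}, and that lemma delivers $\dinfty(\Ray(A),\Ray(B)) \leq \uppi\sqrt{24}\,\ee^{\ell/2}\,\norm{f-h}_{W^{1,\infty}_g}$ pointwise a.e. Taking essential suprema again, and combining with the trivial bound $\nnorm{f-h}_{L^\infty_g} \leq \nnorm{f-h}_{W^{1,\infty}_g}$, I add the three contributions to $\dImm(f,h)$ and bound the combined prefactor by using $\ee^{\ell/2} \leq \ee^{\frac{3}{2}\ell}$ and $1 \leq \ee^{\frac{3}{2}\ell}$: the constant becomes $1 + \tfrac{7}{3} + \uppi\sqrt{24} \leq 18$, giving the desired estimate. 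Openness of $\Imm(\varSigma;\R^m)$ in $W^{1,\infty}_g(\varSigma;\R^m)$ then follows directly since any $f \in \Imm(\varSigma;\R^m)$ admits a $W^{1,\infty}_g$-ball of radius $\tfrac{1}{3}\ee^{-\frac{3}{2}\ell}$ around it consisting of Lipschitz immersions.

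The only subtlety I foresee is the bookkeeping between the pointwise distortion $\ell_x$ and its global version $\ell$: \autoref{lem:HomembInj1} and \autoref{lem:HomembInj2} involve $\ell_x$ in their hypotheses and conclusions, so I need to verify that replacing $\ell_x$ by the (larger) global $\ell$ preserves both the smallness condition on $\abs{A-B}$ and the monotone growth of the estimates, which it does because every quantity on the right-hand sides is non-decreasing in $\ell_x$. The rest is routine: measurability of the pointwise objects and of $x \mapsto \dinfty(\Ray(\dd f\at_x),\Ray(\dd h\at_x))$ follows from the constructions in \autoref{posdef} and the remarks before \autoref{ex:dRay}, so passing to essential suprema is justified.
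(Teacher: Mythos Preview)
Your approach is essentially identical to the paper's: apply \autoref{lem:HomembInj1} and \autoref{lem:HomembInj2} pointwise at almost every $x$, use the bound $\nnorm{\dd_x f^{\dagger_{g,g_0}}} \leq \ee^{\ell/2}$, and sum the contributions. Your added remarks on replacing $\ell_x$ by $\ell$ via monotonicity and on measurability are welcome details the paper leaves implicit.

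One small arithmetic slip: your final inequality $1 + \tfrac{7}{3} + \uppi\sqrt{24} \leq 18$ is false (the left side is approximately $18.72$). The paper writes only $\tfrac{7}{3} + \uppi\sqrt{24} \leq 18$, which \emph{is} true, but in doing so appears to silently drop the $\nnorm{f-h}_{L^\infty}$ contribution to $\dImm$; so the stated constant $18$ is slightly too optimistic in both arguments, though this has no bearing on anything downstream.
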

\begin{proof}
By \autoref{lem:HomembInj1}, one has $\dd_x h\in \Inj(T_x\varSigma;\R^m)$ for almost all $x \in \varSigma$.
\autoref{lem:HomembInj1} and \autoref{lem:HomembInj2} together imply
\begin{align*}
	\dImm(f,h)
	&\leq \tfrac{7}{3} \, \ee^{\frac{3}{2}\ell} \nabs{\dd f-\dd h}_{L^\infty_g}
	+ \uppi \, \sqrt{24} \, \bigparen{
		\esssup_{x \in \varSigma} \,\nnorm{\dd_x f^{\dagger_{g,g_0}}}
	}\, \nabs{\dd f-\dd h}_{L^\infty_g}.
\end{align*}
Now, $\nnorm{\dd_x f^{\dagger_{g,g_0}}}  \leq \ee^{\frac{1}{2}\ell} \leq  \ee^{\frac{3}{2}\ell}$  and 
$\frac{7}{3} + \uppi\sqrt{24} \leq 18$ complete the proof.
\qed
\end{proof}

\begin{lemma}\label{lem:InjembHom}
Let $(V_i,g_i)$ be finite-dimensional Euclidean spaces for $i\in \{1,2\}$ and let $A$, $B \in \Inj(V_1;V_2)$ with $\dPosDef(A^\pull g_2,B^\pull g_2) <\frac{5}{2}$.
Then one has the estimate
\begin{align*}
	\nnorm{A-B}_{g_1,g_2}
	\leq 
	\exp(\dPosDef(g_1,A^\pull g_2))\, \Bigparen{ \dRay(\Ray(A),\Ray(B)) + \dPosDef(A^\pull g_2,B^\pull g_2)}.
\end{align*}
\end{lemma}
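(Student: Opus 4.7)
The plan is to argue pointwise in $u$. Abbreviate $d_1\ceq\dPosDef(g_1,A^\pull g_2)$, $d_2\ceq\dPosDef(A^\pull g_2,B^\pull g_2)$, and $\alpha\ceq\dRay(\Ray(A),\Ray(B))$, and for each nonzero $u\in V_1$ let $\theta(u)\ceq\measuredangle(Au,Bu)\leq\alpha$. Applying the Rayleigh quotient characterization of the generalized eigenvalues in \eqref{cor:geodesicdistanceofPosDef} separately to the pairs $(g_1,A^\pull g_2)$ and $(A^\pull g_2,B^\pull g_2)$ immediately yields $\nnorm{Au}_{g_2}\leq e^{d_1/2}\nnorm{u}_{g_1}$ and $\bigabs{\log(\nnorm{Bu}_{g_2}/\nnorm{Au}_{g_2})}\leq d_2/2$.

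To separate the ``angle'' contribution from the ``length'' contribution in $\nnorm{Au-Bu}_{g_2}$, I would insert the auxiliary vector $w\ceq(\nnorm{Au}_{g_2}/\nnorm{Bu}_{g_2})\,Bu$, which has the direction of $Bu$ and the length of $Au$. Since $Au$ and $w$ are two sides of length $\nnorm{Au}_{g_2}$ enclosing the angle $\theta(u)$, one has $\nnorm{Au-w}_{g_2}=2\nnorm{Au}_{g_2}\sin(\theta(u)/2)\leq\nnorm{Au}_{g_2}\alpha$, while $\nnorm{w-Bu}_{g_2}=\bigabs{\nnorm{Au}_{g_2}-\nnorm{Bu}_{g_2}}$ because $w$ and $Bu$ are collinear and pointing in the same direction. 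The triangle inequality therefore reduces matters to controlling the scalar length difference.

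The key step -- and the only place where the hypothesis $d_2<\tfrac{5}{2}$ is genuinely used -- is bounding this length difference by $d_2\,\nnorm{Au}_{g_2}$ rather than by the weaker $(d_2/2)\,e^{d_2/2}\nnorm{Au}_{g_2}$ produced by the naive estimate $|e^x-1|\leq|x|\,e^{|x|}$. To achieve the linear bound I would verify the elementary inequality $|e^x-1|\leq 2|x|$ for $|x|\leq\tfrac{5}{4}$: the case $x\leq 0$ follows from $e^x\geq 1+x$, and for $0\leq x\leq\tfrac{5}{4}$ the concavity of $g(x)\ceq 1+2x-e^x$ together with $g(0)=0$ and the numerical fact $g(5/4)=\tfrac{7}{2}-e^{5/4}>0$ implies $g\geq 0$ on the interval. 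Applying this with $x=\log(\nnorm{Bu}_{g_2}/\nnorm{Au}_{g_2})$, whose modulus is strictly less than $\tfrac{5}{4}$, yields $\bigabs{\nnorm{Au}_{g_2}-\nnorm{Bu}_{g_2}}\leq d_2\,\nnorm{Au}_{g_2}$. Combining everything gives $\nnorm{Au-Bu}_{g_2}\leq\nnorm{Au}_{g_2}(\alpha+d_2)\leq e^{d_1/2}\nnorm{u}_{g_1}(\alpha+d_2)$, which is even stronger than the claimed bound $e^{d_1}(\alpha+d_2)\nnorm{u}_{g_1}$; taking the supremum over $u$ with $\nnorm{u}_{g_1}=1$ completes the proof.
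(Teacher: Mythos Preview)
Your proof is correct and follows essentially the same route as the paper: split $Au-Bu$ into an ``angle'' part and a ``length'' part via an intermediate vector on the ray of $Bu$ with the length of $Au$, bound the angle part by $2\sin(\theta/2)\leq\theta$, and bound the length part using the scalar inequality that amounts to $|e^x-1|\leq 2|x|$ for $|x|\leq\tfrac{5}{4}$. The paper phrases this last step as $|\sqrt{t}-1|\leq|\log t|$ for $t\in(0,e^{5/2})$, which is your inequality under the substitution $x=\tfrac{1}{2}\log t$; your observation that one actually obtains the sharper prefactor $e^{d_1/2}$ rather than $e^{d_1}$ is also correct.
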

\begin{proof}
Let $u \in V_1\setminus\{0\}$.
Let $v$ and $w \in V_2$ be the unique unit vectors in the rays $\Ray(A)([u])$ and $\Ray(B)([u])$, respectively. Let $\lambda \ceq \nabs{A \, u}_{g_2}$ and $\mu \ceq \nabs{B \, u}_{g_2}$. With the triangle inequality, we obtain
\begin{align}
	\nabs{(A-B)\,u}_{g_2}
	&= \nabs{A \,u - B\, u}_{g_2}
	= \nabs{\lambda \, v - \mu \, w}_{g_2}
	\leq \nabs{\lambda \, v - \lambda \, w}_{g_2} + \nabs{\lambda \, w - \mu \, w}_{g_2}
	\notag\\
	&\leq \lambda \nabs{v - w}_{g_2} + \nabs{\lambda -\mu }
	\leq \lambda\, \paren{ 2 \sin( \tfrac{1}{2} \measuredangle(v,w)) + \nabs{1-\tfrac{\mu}{\lambda} }}
	\notag\\
	&\leq \nnorm{A}_{g_1,g_2} \abs{u}_{g_1}\, \Bigparen{ \dRay(\Ray(A),\Ray(B)) + \nabs{1-\tfrac{\mu}{\lambda} }
	}.\label{eq:InjembHom1}
\end{align}
Next, we choose an orthonormal basis $e$ of $(V_1,A^\pull g_1)$ and denote the eigenvalues of $\Gram_e(B)$ by $0<\lambda_1 < \dotsm < \lambda_ k \leq \exp(\dPosDef(A^\pull g_2,B^\pull g_2))$.
One has
$\sqrt{\lambda_1} \leq \frac{\mu}{\lambda} \leq 	\sqrt{\lambda_k}$,
thus
$
	\nabs{1-\tfrac{\mu}{\lambda} }
	\leq
	\max_{i=1,\dotsc,k} \nabs{\sqrt{\lambda_i} -1}
$.
From the inequality $\nabs{\sqrt{t}-1} \leq \nabs{\log(t)}$ for all $t \in \intervaloo{0,\exp(5/2)}$, it follows that
$\nabs{1-\tfrac{\mu}{\lambda} } \leq \dPosDef(A^\pull g_2,B^\pull g_2)$
Substitution of this along with the bound $\nnorm{A}_{g_1,g_2} \leq \exp(\dPosDef(g_1,A^\pull g_2))$ into \eqref{eq:InjembHom1} and taking the supremum over all nonzero vectors $u$ leads to the desired result.
\qed
\end{proof}
This leads us directly to the following lemma.
\begin{lemma}\label{cor:LipembW1infty}
Let $(\varSigma,g)$ be a compact Riemannian manifold with boundary
and let $f$, $h \in \Imm(\varSigma;\R^m)$ be Lipschitz immersions with
$\dPosDefSec(f^\pull g_0,h^\pull g_0) <\frac{5}{2}$. Then one has the estimate
\begin{align*}
	\nnorm{f-h}_{W^{1,\infty}_g} \leq \exp( \dPosDefSec(g, f^\pull g_0) ) \; \dImm(f,h).
\end{align*}
\end{lemma}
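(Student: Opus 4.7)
The plan is to apply the pointwise estimate from \autoref{lem:InjembHom} at almost every point $x \in \varSigma$ and then take the essential supremum. This is the natural strategy because $\dImm$, $\dPosDefSec$, and the $L^\infty_g$ part of $\nnorm{\cdot}_{W^{1,\infty}_g}$ are all defined as essential suprema of pointwise quantities on $\varSigma$, while \autoref{lem:InjembHom} is purely a linear-algebraic statement at a single point.

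Abbreviate $\ell \ceq \dPosDefSec(g, f^\pull g_0)$. For almost every $x \in \varSigma$, the differentials $\dd f\at_x$ and $\dd h\at_x$ are elements of $\Inj(T_x\varSigma;\R^m)$, and by hypothesis
\begin{equation*}
\dPosDef\bigparen{(f^\pull g_0)\at_x, (h^\pull g_0)\at_x} \leq \dPosDefSec(f^\pull g_0, h^\pull g_0) < \tfrac{5}{2}
\end{equation*}
holds essentially everywhere. I would therefore apply \autoref{lem:InjembHom} with $V_1 = T_x \varSigma$ equipped with $g\at_x$, $V_2 = \R^m$ equipped with $g_0$, $A = \dd f\at_x$, and $B = \dd h\at_x$, yielding the pointwise bound
\begin{equation*}
\nnorm{\dd f\at_x - \dd h\at_x}_{g,g_0} \leq \ee^{\dPosDef(g\at_x, (f^\pull g_0)\at_x)} \bigparen{ \dinfty\bigparen{\Ray(\dd f\at_x), \Ray(\dd h\at_x)} + \dPosDef\bigparen{(f^\pull g_0)\at_x, (h^\pull g_0)\at_x}}.
\end{equation*}

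Since $\dPosDef(g\at_x, (f^\pull g_0)\at_x) \leq \ell$ for almost every $x$, the scalar factor $\ee^{\dPosDef(g\at_x, (f^\pull g_0)\at_x)}$ is dominated by the constant $\ee^\ell$, so taking the essential supremum over $x$ gives
\begin{equation*}
\nnorm{\dd f - \dd h}_{L^\infty_g} \leq \ee^\ell \bigparen{\esssup_{x \in \varSigma} \dinfty\bigparen{\Ray(\dd f\at_x), \Ray(\dd h\at_x)} + \dPosDefSec(f^\pull g_0, h^\pull g_0)} \leq \ee^\ell \, \dImm(f,h),
\end{equation*}
where the second inequality uses the very definition of $\dImm$ in \autoref{dfn:Imm}. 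Combining this with the trivial estimate $\nnorm{f-h}_{L^\infty} \leq \dImm(f,h) \leq \ee^\ell \, \dImm(f,h)$ (valid because $\ell \geq 0$) and using $\nnorm{\cdot}_{W^{1,\infty}_g} = \nnorm{\cdot}_{L^\infty} + \nnorm{\dd\cdot}_{L^\infty_g}$ delivers the claim.

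The conceptual work has already been done in \autoref{lem:InjembHom}; what remains here is essentially bookkeeping. The one subtlety worth flagging is the asymmetry of the final bound in $f$ and $h$: it is crucial that the exponential prefactor features $\dPosDef(g\at_x, (f^\pull g_0)\at_x)$ rather than a mixed quantity, since only then can it be absorbed uniformly into the constant $\ee^\ell$ independently of the perturbation $h$. This asymmetry already appears in \autoref{lem:InjembHom} and transfers directly to the global statement.
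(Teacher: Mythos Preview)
Your proof is correct and follows exactly the approach the paper intends: the paper gives no explicit proof of this lemma, merely stating that it ``leads us directly'' from \autoref{lem:InjembHom}, and your pointwise application of that lemma followed by taking essential suprema is precisely what is meant. Your bookkeeping with the $L^\infty$ term and the factor $\ee^\ell \geq 1$ is clean and your remark on the asymmetry in $f$ and $h$ is well observed.
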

\newpage

\printbibliography

\end{document}